\newsavebox{\@brx}
\newcommand{\llangle}[1][]{\savebox{\@brx}{\(\m@th{#1\langle}\)}%
  \mathopen{\copy\@brx\kern-0.5\wd\@brx\usebox{\@brx}}}
\newcommand{\rrangle}[1][]{\savebox{\@brx}{\(\m@th{#1\rangle}\)}%
  \mathclose{\copy\@brx\kern-0.5\wd\@brx\usebox{\@brx}}}
\newcommand\xqed[1]{%
	\leavevmode\unskip\penalty9999 \hbox{}\nobreak\hfill
	\quad\hbox{#1}}
\newcommand\xxqed{\xqed{$\triangle$}}
\def\e#1\e{\begin{equation}#1\end{equation}}
\def\ea#1\ea{\begin{align}#1\end{align}}
\theoremstyle{plain}
\newtheorem{thm}{Theorem}[section]
\newtheorem{lem}[thm]{Lemma}
\newtheorem{prop}[thm]{Proposition}
\theoremstyle{definition}
\newtheorem{dfn}[thm]{Definition}
\newtheorem{ex}[thm]{Example}
\newtheorem{rem}[thm]{Remark}
\newcommand{\op}{\operatorname}
\newcommand{\C}{\mathbb{C}}
\newcommand{\Z}{\mathbb{Z}}
\renewcommand{\H}{\mathbf{H}}
\newcommand{\im}{\op{im}}
\newcommand{\pii}{2\pi \mathbf{i}}
\def\Xint#1{\mathchoice
{\XXint\displaystyle\textstyle{#1}}%
{\XXint\textstyle\scriptstyle{#1}}%
{\XXint\scriptstyle\scriptscriptstyle{#1}}%
{\XXint\scriptscriptstyle\scriptscriptstyle{#1}}%
\!\int}
\def\XXint#1#2#3{{\setbox0=\hbox{$#1{#2#3}{\int}$}
\vcenter{\hbox{$#2#3$}}\kern-.5\wd0}}
\def\dashint{\Xint-}
\numberwithin{equation}{section}
\newcommand{\subjclass}[2][2010]{%
  \let\@oldtitle\@title%
  \gdef\@title{\@oldtitle\footnotetext{#1 \emph{Mathematics Subject Classification.} #2}}%
}
\newcommand{\keywords}[1]{%
  \let\@@oldtitle\@title%
  \gdef\@title{\@@oldtitle\footnotetext{\emph{Key words and phrases.} #1.}}%
}
\let\orig@afterheading\@afterheading
\def\@afterheading{%
   \@afterindenttrue
  \orig@afterheading}
\begin{document}
\title{\bf 
Gromov-Witten Generating Series of Elliptic Curves 
and Iterated Integrals of Eisenstein-Kronecker Forms
}
\author{Jie Zhou}
\date{}

\maketitle

\begin{abstract}
	
	We compute various types of iterated integrals 
	of Eisenstein-Kronecker forms that are constructed from the Kronecker theta function.
Furthermore, we relate the generating series of Gromov-Witten invariants of elliptic curves
to these iterated integrals.

\end{abstract}
\setcounter{tocdepth}{2} \tableofcontents

\section{Introduction}

The Gromov-Witten (GW) theory of elliptic curves has remained an interesting and active research topic in modern enumerative geometry.
Explicit formulas for the generating series of the corresponding GW invariants in terms of Jacobi theta functions have been derived by Okounkov-Pandharipande \cite{Okounkov:2006}  and Bloch-Okounkov \cite{Bloch:2000}, 
and turn out to be quasi-elliptic functions \cite{Eichler:1985} that are interesting in the studies of modular forms.
These formulas have since received a great deal of attention 
due to their concreteness and effectiveness in studying the interaction between enumerative geometry,  modular forms,  and mathematical physics, see e.g., \cite{Dijkgraaf:1995, Kaneko:1995, Zagier:2016partitions}.\\

Recently, it is shown 
in \cite{Zhou:2023GWgeneratingseries} that these purely enumerative generating series
are given by configuration space integrals
of  cohomological classes constructed from sections of  Poincar\'e bundles.
Furthermore, they admit new
sum-over-partitions formulas that are much simpler than the original ones given in \cite{Bloch:2000, Okounkov:2006}.

We now review and rephrase these results briefly.
For any  point $\tau$ on the upper half-plane $\H$, let
\begin{equation}
	E=\C/\Lambda_{\tau}\,,\quad \Lambda_{\tau}:=\Z\oplus \Z \tau
\end{equation}
be the corresponding elliptic curve.
We fix once and for all a
linear holomorphic coordinate $z$ on the universal cover $\widetilde{E}=\mathbb{C}\rightarrow E$. 
The origin on $E$ is taken to be the image of $0\in \mathbb{C}$ under the universal covering map $\mathbb{C}\rightarrow E$.
We also fix a weak Torelli marking $\{[A],[B]\}$, namely a symplectic frame for $H_{1}(E,\mathbb{Z})$, where the class $[A]$ is taken to be the image  under the map $\mathbb{C}\rightarrow E$ of the segment $A$ connecting $ \tau$ and $ \tau+1$ on $\mathbb{C}$.

For any $n\geq 1$, denote $[n]:=(1,2,\cdots, n)$ and let $\mathsf{Conf}_{[n]}(E)\subseteq E^{n}$ be the configuration space of $n$ ordered points on $E$.
Let the linear coordinates on the $n$ components of $\widetilde{E}^{n}, E^{n}$ in $\widetilde{E}^{n}\times E^{n}$ be $w_1,\cdots, w_{n}$ and $z_1,\cdots, z_{n}$ respectively, and $q=e^{\pii\tau}$.

\begin{thm}[=Theorem  \ref{mainthm1}]\label{mainthm1intro}

Assume the following notation.
\begin{itemize}
	\item 

	Let 
	\begin{equation}\label{eqnomegaC2nintro}
	\omega_{n}:=
	{\theta (\sum_{i=1}^{n} w_{i}) \over \prod_{i=1}^{n} \theta (w_{i})}\cdot
	{	\prod_{i<j }\theta (z_{i}+w_{i}-z_{j}-w_{j})\theta (z_{i}-z_{j}) \over  \prod_{i<j}
		\theta (z_{i}+w_{i}-z_{j}) \theta (z_{i}-w_{j}-z_{j})}\cdot \bigwedge_{i=1}^{n}{\mathbf{i} \,dz_{i}\wedge d\bar{z}_{i}\over 2\,\mathrm{im}\,\tau}\,,
\end{equation}
 where $\theta(w)$ is the odd Jacobi theta function  normalized such that $(\partial_w \theta)(0)=1$.
 \item
	Let $\widehat{\mathbold{B}}_{n}(w)$ be the 
	complete Bell polynomial in the variables $\widehat{\mathcal{E}}_{m}^{*}(w),m\geq 1$ which are the Eisenstein-Kronecker series\footnote{A quick definition of the complete Bell polynomial in the variables $x_1,\cdots, x_{m},\cdots$ is given by $\mathbold{B}_{m}(x_1,\cdots,x_{m})=m!\cdot [t^{m}]\exp(\sum_{k=1}^{\infty}{t^{k}\over k!}x_{k} )$. The quantity $\widehat{\mathcal{E}}_{m}^{*}$ can be alternatively written as 
	$\partial_{w}^{m}(\ln\theta)+2\mathbb{G}_{m}+\delta_{m,1}{-\pi\over \mathrm{im}\,\tau}(\overline{w}-w)$, where $2\mathbb{G}_{m}$ is the ordinary Eisenstein series defined as the summation of $(m-1)!\cdot\lambda^{-m}$ over $\lambda\in \Lambda_{\tau}\setminus\{(0,0)\}$.}
	\[
	\widehat{\mathcal{E}}_{m}^{*}(w):=
	(-1)^{m-1}(m-1)!
	\left({1\over w^m}+\sum_{\lambda\in \Lambda_{\tau}\setminus\{(0,0)\}}\left( {1\over (w+\lambda)^{m}}+{(-1)^{m-1}}{1\over \lambda^{m}}\right)  +\delta_{m,1}{-\pi\over \mathrm{im}\,\tau} (\overline{w}-w)
	\right)\,.
	\]
	\item 
	Denote $\Pi_{[n]\setminus \{j\}}$ to be the set of partitions of $[n]\setminus \{j\}$ consisting of 
	elements of the form $\pi=\{\pi_1,\cdots, \pi_{\ell}\}$.
\end{itemize}
	Then the regularized integral \cite{Li:2020regularized} $\dashint_{E^{n}}\omega_{n}$ satisfies
	\begin{equation}\label{eqnTngwpureweightstructureintro}
		\dashint_{E^{n}}\omega_{n}=
		\sum_{j\in [n]}\sum_{\pi\in \Pi_{[n]\setminus \{j\}}}
		{\widehat{\mathbold{B}}_{{\pi}}\over |\pi|}\,,
		\quad
		{\widehat{\mathbold{B}}_{{\pi}}\over |\pi|}:=\prod_{k=1}^{\ell} {\widehat{\mathbold{B}}_{|\pi_{k}|}(\sum_{i\in \pi_{k}}w_{i})\over |\pi_{k}|}\,.
	\end{equation}
 
\end{thm}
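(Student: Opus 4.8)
The plan is to compute $\dashint_{E^n}\omega_n$ by recognizing the coefficient of $\omega_n$ as an elliptic Cauchy determinant, reducing the $n$-fold regularized integral to iterated one-variable integrals, and then reorganizing the residue contributions combinatorially into the partition sum. First I would set $a_i=z_i+w_i$ and $b_i=z_i$, so that the doubly-indexed factor in \eqref{eqnomegaC2nintro} becomes, after using oddness of $\theta$ to rewrite the denominator, proportional to the elliptic Cauchy kernel $\prod_{i<j}\theta(a_i-a_j)\theta(b_i-b_j)\big/\prod_{i,j}\theta(a_i-b_j)$. Since $\sum_i(a_i-b_i)=\sum_i w_i$, the prefactor $\theta(\sum_i w_i)/\prod_i\theta(w_i)$ is exactly the one supplied by the Frobenius determinant formula, so that the coefficient of $\omega_n$ equals, up to an explicit scalar and sign, the Frobenius--Kronecker determinant $\det_{1\le i,j\le n}[F(a_i-b_j)]$, where $F(x)=\theta(x+t)/(\theta(x)\theta(t))$ is the Kronecker function. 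Expanding this determinant over permutations $\sigma\in S_n$ and tracking cycles shows that each cycle $(i_1\cdots i_k)$ contributes the telescoping chain $\prod_{a}F(z_{i_a}+w_{i_a}-z_{i_{a+1}})$, whose $z$-arguments involve only consecutive differences around the cycle.

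Next I would use the reduction (Fubini) property of the regularized integral \cite{Li:2020regularized} to turn $\dashint_{E^n}$ into an iteration of one-variable integrals $\dashint_E(\cdots)\,\frac{\mathbf{i}\,dz_i\wedge d\bar z_i}{2\,\mathrm{im}\,\tau}$. The technical heart is then a single-variable lemma: for an elliptic integrand $f$, the regularized integral $\dashint_E f\,\frac{\mathbf{i}\,du\wedge d\bar u}{2\,\mathrm{im}\,\tau}$ equals a weighted sum over the poles of $f$, each pole $p$ contributing its Laurent data paired with the Eisenstein--Kronecker series $\widehat{\mathcal{E}}^{*}_m(p)$, where $m$ runs up to the pole order. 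Crucially, the non-holomorphic completions appearing in $\widehat{\mathcal{E}}^{*}_m$ --- the term $2\mathbb{G}_m$ and the piece $\tfrac{-\pi}{\mathrm{im}\,\tau}(\overline w-w)$ --- are produced by the $\bar\partial$-exact boundary term in the regularized primitive, not by any naive contour residue; indeed a naive contour integral of a doubly-periodic chain vanishes, so the regularization is what makes this step nontrivial. As a sanity check in the case $n=2$, integrating out the center of mass and the relative coordinate $u=z_1-z_2$ leaves simple poles at $u=-w_1$ and $u=w_2$ with residues $\mp1$, and the lemma returns $-\widehat{\mathcal{E}}^{*}_1(-w_1)+\widehat{\mathcal{E}}^{*}_1(w_2)=\widehat{\mathcal{E}}^{*}_1(w_1)+\widehat{\mathcal{E}}^{*}_1(w_2)$ by oddness, matching the right-hand side of \eqref{eqnTngwpureweightstructureintro}.

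Finally I would perform the combinatorial reorganization. Integrating the Kronecker chains variable by variable, each residue either merges a point into its neighbor in the chain --- replacing the two adjacent shifts by their sum and thereby building up the block variable $\sum_{i\in\pi_k}w_i$ --- or closes off a chain with an Eisenstein--Kronecker factor. Grouping permutations by cycle support converts $\sum_\sigma$ into a sum over set partitions $\pi$, the cycles becoming the blocks $\pi_k$; the $(|\pi_k|-1)!$ cyclic orderings of a block of size $|\pi_k|$, weighted by the per-order integrated data and the determinant sign, reassemble through the exponential formula into the complete Bell polynomial $\widehat{\mathbf{B}}_{|\pi_k|}(\sum_{i\in\pi_k}w_i)$ divided by $|\pi_k|$, with the division by $|\pi_k|$ coming exactly from the cyclic symmetry of a single chain. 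The distinguished index $j\in[n]$ over which one sums is the pole selected in the final integration: it is the unique point not absorbed into an internal chain, so that the remaining $[n]\setminus\{j\}$ is what gets partitioned.

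I expect the main obstacle to be this last step together with the single-variable lemma it rests on. Concretely, one must verify on the nose that the regularized boundary terms reproduce the precise non-holomorphic completions in $\widehat{\mathcal{E}}^{*}_m$, and that, in passing from the signed permutation expansion to the positive partition sum, the determinant signs $(-1)^{|\pi_k|-1}$ per cycle conspire with the cyclic-ordering counts and the Laurent weights to yield exactly the Bell-polynomial coefficients, the factors $1/|\pi_k|$, and the clean outer sum over the marked index $j$.
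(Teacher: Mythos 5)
Your overall architecture coincides with the paper's: a determinantal rewriting of the coefficient of $\omega_n$, expansion over permutations organized by cycles, Fubini reduction to one-variable regularized integrals, and the count of $(|\pi_k|-1)!$ cyclic orders per block with determinant signs cancelling per-loop signs. The paper uses the bordered Frobenius--Stickelberger determinant with elliptically completed entries $\widehat{Z}=\widehat{\theta}'/\widehat{\theta}$ (Lemma~\ref{lemFrobenius-Stickelbergeralmostelliptic}), which is the $t\to 0$ limit of your Cauchy--Kronecker determinant; this avoids the auxiliary parameter $t$ altogether (in your version every permutation survives, each fixed point contributing a factor $\theta(w_j+t)/(\theta(w_j)\theta(t))$, and all this $t$-dependence, together with the prefactor $1/S_t(\sum_i w_i)$, must cancel at the end --- bookkeeping you do not address), and it also shows that the marked index $j$ is the index transposed with the border row and column, not ``the pole selected in the final integration''.

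The genuine gap is the one-variable lemma you call the technical heart: as stated it is false. For higher-order poles, take $f(z)=\wp(z-a)$: it is elliptic with a double pole at $a$ and no residue, yet $\dashint_E \wp(z-a)\,\mathrm{vol}=-\widehat{\eta}_1$, a constant independent of $a$, which no pairing of the Laurent data at $a$ with $\widehat{\mathcal{E}}^{*}_m(a)$ can produce. Even for simple poles the prescription is inconsistent with Fubini, as your own $n=2$ case shows: by Lemma~\ref{lemFrobenius-Stickelbergeralmostelliptic}, $\varpi_2=\widehat{Z}(w_1)+\widehat{Z}(w_2)-\widehat{Z}(w_1+z_1-z_2)-\widehat{Z}(w_2+z_2-z_1)$; integrating in $z_1$ first, your rule returns $-\widehat{Z}(z_2-w_1)+\widehat{Z}(z_2+w_2)$, whose $z_2$-integral vanishes by Lemma~\ref{lemresidueregularizedintetgralofhatem}, rather than giving $\widehat{Z}(w_1)+\widehat{Z}(w_2)$; your sanity check succeeded only because in the relative coordinate $u$ the constant term of the partial-fraction decomposition happens to equal the residue-weighted sum of $\widehat{Z}$ at the poles. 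What is actually true --- and is the real content to be proved --- is Lemma~\ref{lemresidueregularizedintetgralofhatem} together with Proposition~\ref{propiteratedcalculation}: $\dashint_E\widehat{e}_m\,\mathrm{vol}=\delta_{m,0}$, chain-type products integrate to zero, and the intermediate merge is \eqref{eqnregularizedintegraloftwo}, namely $\dashint_{E_1}\widehat{e}_{m_1}(s_{12})\,\widehat{e}_{m_n}(s_{n1})\,\mathrm{vol}_1=(\delta_{m_n,0}-\delta_{m_1\geq 1})\,\widehat{e}_{m_1+m_n}(s_{n1}+s_{12})$, whose output is a single $\widehat{e}_{m_1+m_n}$ of the summed shift --- not residues paired with $\widehat{\mathcal{E}}^{*}$ at pole locations, which would not even be a function of the correct variables. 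Establishing this requires the $\bar\partial$-primitives of Lemma~\ref{lempartialrelation}~(ii) and the residue formula for regularized integrals. Once it is in place, iterating around each cycle yields $(|\pi_k|-1)!\,\widehat{e}_{|\pi_k|}(\sum_{i\in\pi_k}w_i)=\widehat{\mathbold{B}}_{|\pi_k|}(\sum_{i\in\pi_k}w_i)/|\pi_k|$ directly, and no exponential-formula reassembly is needed, since $\widehat{e}_m=\widehat{\mathbold{B}}_m/m!$ is already the complete Bell polynomial.
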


The significance of this result lies in the following. Let $F_n(w_{1},\cdots, w_{n}; q)$ be the $n$-point function of  disconnected stationary GW invariants  \cite{Okounkov:2006} of the elliptic curve $\mathcal{E}$ that is mirror to $E$. Consider its scaled version
	\begin{equation}\label{eqndefTn}
		T_{n}(w_{1},\cdots, w_{n};q):=\theta(\sum_{i=1}^{n} w_{i})\cdot \prod_{k=1}^{+\infty} (1-q^{k})^{-1}\cdot F_{n}(w_{1},\cdots, w_{n};q)\,.
	\end{equation}
This quantity admits very explicit expressions in terms of the Jacobi theta function $\theta$ and turns out to be a quasi-elliptic function,
due to Okounkov-Pandharipande \cite{Okounkov:2006}  and Bloch-Okounkov \cite{Bloch:2000}.
 Let $\widehat{T}_{n}(w_{1},\cdots, w_{n};q)$ be the elliptic and modular completion \cite{Eichler:1985} of $T_{n}(w_{1},\cdots, w_{n};q)$.
 Then one has the following relation \cite{Zhou:2023GWgeneratingseries}
between this enumerative-geometric quantity and the regularized integral 
  \begin{equation}\label{eqnTnhatasregularizedintegral}
 \widehat{T}_{n}(w_{1},\cdots, w_{n};q)
		=\dashint_{E^{n}}\omega_{n}\,.
 \end{equation}

The results \eqref{eqnTngwpureweightstructureintro}, \eqref{eqnTnhatasregularizedintegral} are  proved \cite[Theorem 1.1, Theorem 1.2]{Zhou:2023GWgeneratingseries}  
in their holomorphic limit versions, which correspond
to an iterated $A$-cycle integral on the configuration space $\mathsf{Conf}_{[n]}(E)$. 
The results are then promoted \cite[Section 4.3]{Zhou:2023GWgeneratingseries} to the present version using the machinery \cite{Eichler:1985} of quasi-elliptic functions,
and the relation between iterated $A$-cycle integrals and regularized integrals established in \cite{Li:2020regularized}.\\

The main goal of this work is to offer a direct and much simpler proof of 
Theorem \ref{mainthm1intro} using    the tool of regularized integrals
\cite{Li:2020regularized, Li:2022regularized, Zhou:2023cohomologicalpairings}
and properties of the Eisenstein-Kronecker forms (reviewed in Section \ref{secpreliminaries} below).
In particular, this work serves as
a demonstration of the usefulness of the notion of regularized integrals which seems to have provided 
an ideal tool in regularizing divergent integrals arising from 2d chiral conformal field theories (see \cite{Li:2022regularized} for discussions on this).
In the course, we compute various types of iterated 
integrals of Eisenstein-Kronecker forms which are of independent interest in understanding the geometry of configuration spaces of elliptic curves.
We also reveal a relation between regularized integrals of Eisenstein-Kronecker forms
and Chen's iterated path integrals  \cite{Chen:1977}, and obtain some combinatorial  results for the generating series of $\{\widehat{T}_{n}\}_{n\geq 1}$.

		\subsection*{Structure of the paper}

	In Section \ref{secpreliminaries}, we  recall the basics of quasi-elliptic and almost-elliptic functions, including
		Jacobi theta functions, Kronecker theta function, and 
	Eisenstein-Kronecker series.

	In Section 	\ref{seciteratedintegrals},  after a brief review 
		on the notion of regularized integrals on configuration spaces of elliptic curves,
		we compute  regularized integrals of Eisenstein-Kronecker forms and discuss their connections to Chen's iterated path integrals.

In Section 	\ref{secGW}, we prove Theorem \ref{mainthm1intro}
based on the results obtained in Section \ref{seciteratedintegrals}.
We also discuss some  combinatorial properties of the generating series of $\{\widehat{T}_{n}\}_{n\geq 1}$.

	\subsection*{Acknowledgements}
	
	We thank the anonymous referees for useful comments that helped improve the paper.
	 We thank Si Li for fruitful collaborations on related topics, and Jin Cao, Xinxing Tang, and Zijun Zhou for helpful conversations.
	 
This work was supported by the
	National Key Research and Development
	Program of China (No. 2022YFA1007100) and the Young Overseas High-Level Talents Introduction Plan of China.

\subsection*{Notation and conventions}
	\begin{itemize}
		\item Let $\Omega_{X}^{p}$ be the sheaf of holomorphic $p$-forms on $X$ and 
		$H^{0}(X,\Omega_{X}^{p})$ be the corresponding space of global sections over $X$.
		\item Let $A^{p,q}_{X}$ (resp. $A^{p,q}_{X}(\star D)$) be the space of $(p,q)$ forms on $X$ (resp. the space of $(p,q)$ forms that are 
		smooth on $X$ except for holomorphic poles along the effective divisor $D$).
		Let $A^{n}(X)=A^{n}_{X}=\bigoplus_{p+q=n}A^{p,q}_{X}, A^{n}_{X}(\star D)=\bigoplus_{p+q=n}A^{p,q}_{X}(\star D)$.

			\item 
		The notation $[t^k]f$ represents the degree $t^{k}$ coefficient of the formal Laurent series $f$ in $t$.
		

		\item
		Throughout this work, when we need to keep track of the variables in a construction such as a function $f_{n}$ that are taken from some finite set $J$ or sequence $\mathbold{K}$
		of variables  with cardinality $n$, we use the notation
		$f_{J}$ or $f_{\mathbold{K}}$.
		Correspondingly, we shall
		use the notation $S$ abusively for both the set $S$ and its cardinality $|S|$.	
	
		\item
		We denote collectively the points $P_{i},1\leq i \leq n$ and $Q_{j},1\leq j\leq n$ by $P,Q$, respectively.
		Similarly, we apply the same convention to denote a collection of coordinates, e.g., $z=(z_1,\cdots, z_n)$.
	
	\item
		For notational simplicity, we shall often 
		suppress the arguments of a function whenever they are clear from the surrounding texts.

	\end{itemize}

\section{Preliminaries}
\label{secpreliminaries}

We now 
collect
a few results in the literature about the space of almost-elliptic functions, which
includes in particular the subspace $A_{E}^{0}(\star D)$ that we are mostly interested in.

Throughout this work,
constructions on the elliptic curve $E_{\tau}=\C/\Lambda_{\tau}$ (and on the corresponding universal family
$\mathcal{E}_{\H}\rightarrow \H$) will be identified with their lifts
to the universal cover.
For example, meromorphic functions on $E$
are identified with those on $\mathbb{C}$
that are periodic/elliptic under the translation action by $\Lambda_{\tau}$.

\subsection{Jacobi theta functions and Eisenstein-Kronecker series}
\label{basicsonellipticfunctions}

We first recall some standard facts about Jacobi theta functions and 
Eisenstein series, mainly following \cite{Eichler:1985, Silverman:2009arithmetic}.
Define $\theta$ to be the normalized Jacobi theta function $\vartheta_{({1\over 2},{1\over 2})}$: 
\begin{equation}
\theta(z)={\vartheta_{({1\over 2},{1\over 2})}(z)\over \vartheta'_{({1\over 2},{1\over 2})}(0)}={\vartheta_{({1\over 2},{1\over 2})}(z)\over -\pii \eta^3}\,,
\end{equation}
where $\eta$ is the Dedekind eta function.
For a positive integer $k$, define the Eisenstein series
\begin{equation}\label{eqnwidehatZexpansion}
	G_{k}={1\over 2}\sum_{\lambda\in\Lambda_{\tau}\setminus \{(0,0)\}}^{\quad\,e} {1\over \lambda^{k}}\,.
\end{equation}	
When $k$ is odd, it is defined to be zero by convention.
When $k=2$, one uses the Eisenstein summation prescription $\sum^{e}$ (summing over the $1$ direction first then $ \tau$ direction in $\Lambda_{\tau}$) to deal with the non-absolute convergence issue.

Let $\zeta,\wp$ be the Weierstrass $\zeta$-function and $\wp$-function,
with $\wp=-\partial_{z}\zeta=-\zeta'$. Hereafter the superscript $'$ indicates derivative in $z$.
The following relation between Eisenstein series and  semi-periods is classical
\begin{equation}
\int_{A}\wp \,dz=-\eta_{1}\,,\quad \eta_{1}:=2G_{2}\,.
\end{equation}
One also has 
the following expansions (see e.g., \cite[Chapter VI.3]{Silverman:2009arithmetic})
\begin{eqnarray}\label{eqnthetazetaexpansion}
		\theta(z)
	&=&z\prod_{\lambda\in\Lambda_{\tau}\setminus\{(0,0)\}}\left(1-{z\over \lambda}\right)e^{{z\over \lambda}+{1\over 2}({z\over \lambda})^2}=z\exp\left(\sum_{k\geq 1}-2G_{2k}{z^{2k}\over 2k}\right)
	\,,\nonumber\\ 
	\zeta(z)&=&
	{1\over z}+\sum_{ \lambda\in\Lambda_{\tau}\setminus\{(0,0)\}}\left( {1\over z+\lambda}-{1\over \lambda}+{z\over \lambda^2}\right)={1\over z}-\sum_{k\geq 2} {2G_{2k}}{z^{2k-1}}\,.
\end{eqnarray}	
For later use, we introduce the following notation.

\begin{dfn}\label{dfnhatthetazeta}
	Define
\begin{equation}
	\mathbold{Y}=-{\pi\over \mathrm{im}\,\tau}\,,\quad 
	\mathbold{A}=	(\bar{z}-z)\mathbold{Y}=\pii\,{\bar{z}-z\over \bar{\tau}-\tau}\,.
\end{equation}
	Introduce 	
	\begin{eqnarray}\label{eqnhatthetazeta}
		\widehat{\eta}_1&:=&
		\eta_{1}+\mathbold{Y}\,,\nonumber\\
		2\widehat{G}_{k}&:=&	2G_{k}+\delta_{k,2}\mathbold{Y}\,,\nonumber\\
		\widehat{\theta}(z)&:=&\theta(z)\cdot \exp(-2\pi {(\mathrm {im}~z)^2\over \mathrm{im}~\tau})\,,\nonumber\\
		Z(z)&:=&\partial_{z} \ln {\theta}=\zeta(z)-z{\eta}_{1}\,,\nonumber\\
		\widehat{Z}(z)&:=&\partial_{z} \ln \widehat{\theta}=\zeta(z)-z\widehat{\eta}_{1}		+\mathbold{Y}\bar{z}\,.
	\end{eqnarray}	
\end{dfn}
Hereafter, we suppress the notation for the variable $\bar{z}$
that is linearly independent of $z$.
However, when considering properties such as parity, a notation 
$f(-z)$
stands for the quantity obtained from replacing both $z,\bar{z}$ in 
$f$ by $-z,-\bar{z}$.\\

We also recall the Eisenstein-Kronecker series \cite{Weil:1976}.
\begin{dfn}\label{dfnEisensteinKroneckerseries}
The Eisenstein-Kronecker series are defined to be
\begin{equation}\label{eqndfnKroneckerEisenstein}
	\mathcal{E}_{m}(z)=\sum^{~\quad e}_{\lambda\in\Lambda_{\tau}} {1\over  ( z+\lambda)^{m}}\,,\quad m\geq 1\,.
\end{equation}
Introduce the
normalized versions
\begin{eqnarray}\label{eqnEm*}
	\mathcal{E}_{m}^{*}(z)&=&
	(-1)^{m-1}(m-1)! \,
	\mathcal{E}_{m}(z)
	+(m-1)!\cdot 2G_{m}\,,\nonumber\\
	\widehat{\mathcal{E}}_{m}^{*}(z)&=&\mathcal{E}_{m}^{*}(z)+\delta_{m,1}\mathbold{A}(z)
	\,.
\end{eqnarray}
\end{dfn}

Definition \ref{dfnEisensteinKroneckerseries} and the relation \eqref{eqnthetazetaexpansion}  give
\begin{equation}\label{eqnEm}
	\partial_{z}\mathcal{E}_{m}=-m\mathcal{E}_{m+1}\,,\quad 
	\mathcal{E}_{m}={(-1)^{m-1}\over (m-1)!}(\ln\theta)^{(m)}\,,\quad
	m\geq 1\,.
\end{equation}

\subsection{Laurent coefficients of the Kronecker theta function}

Of crucial importance in this work are the Kronecker theta function $	S_{c}(z)$ \cite{Weil:1976}  or Szego kernel, and their Laurent coefficients which will be used to define the Eisenstein-Kronecker forms in Section \ref{secEKforms} below.

\begin{dfn}\label{dfnSzegokernel}
Set
\begin{equation}
	S_{c}(z):={\theta(z+c)\over \theta(z)\theta(c)}\,,\quad 
	\widehat{S}_{c}(z):=
	e^{c\mathbold{A}(z)}{\theta(z+c)\over \theta(z)\theta(c)}\,.
\end{equation}
	These functions are meromorphic in $c$. Denote their Laurent expansions in $c$ by
\begin{equation}
	S_{c}(z)=
	{1\over c}\sum_{m\geq 0} c^{m}{{\mathbold{B}}_{m}(z)\over m!}\,,\quad 
		\widehat{S}_{c}(z)={1\over c}\sum_{m\geq 0} c^{m}{\widehat{\mathbold{B}}_{m}(z)\over m!}
	\,.
\end{equation}
\end{dfn}
\begin{rem}
The quantity $S_{c}(z)$ appears as the two-point function in chiral free fermions \cite{Raina:1989}, and 
$\widehat{S}_{c}(z)$ is its elliptic completion. 
The above Laurent coefficients enter the
elliptic polylogarithms \cite[Section 3.5]{Brown:2011} (see also \cite{Bannai:2007, Bannai:2010}), and the GW generating series of elliptic curves \cite[Section 4.3]{Zhou:2023GWgeneratingseries}.
See \cite{Sprang:2019, Sprang:2020} for a possible unified viewpoint via the Poincar\'e sheaf, and \cite{Fonseca:2020, Fonseca:2025} for a purely algebraic description of these real-analytic objects.
\end{rem}

\subsubsection{Ring of almost-elliptic functions}

The Laurent coefficients of the Kronecker theta function $S_{c}(z)$ can be computed concretely in terms of the Eisenstein-Kronecker series $\mathcal{E}_{m}(z),m\geq 1$ as follows.
One has from \eqref{eqnEm} that 
\[
\mathcal{E}_{1}(z+c)
=
\sum_{m\geq 0}
\partial_{z}^{m}\mathcal{E}_{1}(z) {c^{m}\over m!}
=\sum_{m\geq 0}(-1)^{m}m! \,
\mathcal{E}_{m+1}(z) {c^{m}\over m!}\,,
\]
and thus
\begin{equation}\label{eqngeneratingseriesofEms}
{\theta(z+c)\over \theta(z)}
=\exp
\left(\sum_{m\geq 0}
(-1)^{m}m! \,
\mathcal{E}_{m+1}(z){c^{m+1}\over (m+1)!}
\right)\,.
\end{equation}
Combining \eqref{eqnthetazetaexpansion} and \eqref{eqnEm*}, this gives 
\begin{equation}
S_{c}(z)={1\over c}\exp \left(\sum_{m\geq 1} {c^{m}\over m!} \mathcal{E}_{m}^{*}(z)\right)\,.
\end{equation}
Recall that 
the complete Bell polynomial in the variables $x_1,\cdots, x_{m},\cdots$ is defined as  \begin{equation}\label{eqndfncompleteBellpolynomial}
\mathbold{B}_{m}(x_1,\cdots,x_{m}):=m!\cdot [t^{m}]\exp\left(\sum_{k=1}^{\infty}{t^{k}\over k!}x_{k} \right)\,.
\end{equation}
By the set partition version of the Fa\`a di Bruno formula, one then sees that
$\mathbold{B}_{m}(z)$ in Definition \ref{dfnSzegokernel} is the 
complete Bell polynomial $\mathbold{B}_{m}(\mathcal{E}_{1}^{*},\mathcal{E}_{2}^{*},\cdots, \mathcal{E}_{m}^{*})$ in the variables 
$\mathcal{E}_{k}^{*}, k\geq 1$.
Similarly, $\widehat{\mathbold{B}}_{m}(z)$ is given by the complete Bell polynomial in the  variables 
$\widehat{\mathcal{E}}_{k}^{*}, k\geq 1$.
 More details regarding various expansions and formulas for $S_{c}(z), \widehat{S}_{c}(z)$ can be found in
\cite{Zagier:1991}
(see also \cite{Brown:2011, Goujard:2016counting, Zhou:2023GWgeneratingseries}).

\begin{dfn}
		\label{dfnringofquasiellipticfunctions}
	Let the notation be as above.
	Set
\begin{equation}
	e_{m}(z):=
{{\mathbold{B}}_{m}(z)\over m!}
\,,\quad
\widehat{e}_{m}(z):=
{\widehat{\mathbold{B}}_{m}(z)\over m!}\,,\quad m\geq 0\,,
\end{equation}
and use the convention $e_{m}(z)=0=\widehat{e}_{m}(z)$ if $m\leq -1$.
	Define the rings of \emph{quasi-elliptic} and \emph{almost-elliptic} functions to be the
	 graded polynomial rings\footnote{The quantities $\{{e}_{m}(z)\}_{m\geq 1}, \{\widehat{e}_{m}(z)\}_{m\geq 1}$ 
	in fact satisfy relations such as ones induced by the Weierstrass relations. These relations, however, 
	do not play an important role in this work.}
\begin{equation}
	\widetilde{\mathfrak{F}}=\mathbb{C}\big[\{e_{m}\}_{m\geq 1}\big]\,,\quad 
	\widehat{\mathfrak{F}}=\mathbb{C}\big[\{\widehat{e}_{m}\}_{m\geq 1}\big]\,,
\end{equation}
	respectively.
	Here for $m\geq 1$, the gradings of $e_{m},\widehat{e}_{m} $ are $m$ and the polynomial degrees are $1$.
	
\end{dfn}

Using \eqref{eqnwidehatZexpansion}, it is direct to see that 
the first few terms are given by 
\begin{equation}\label{eqnfirstfewhatems}
	\widehat{e}_{0}(z)=1\,,\quad 
	\widehat{e}_{1}(z)=\widehat{Z}(z)\,,\quad
	\widehat{e}_{2}(z)={1\over 2}(-\wp(z)+\widehat{Z}^{2}(z))\,.
\end{equation}
From 
the binomial identity, we also have 
\begin{equation}\label{eqnbinomialidentity}
\widehat{e}_{m}(z)=\sum_{k=0}^{m}e_{k}(z){(\mathbold{A}(z))^{m-k}\over (m-k)!}\,.
\end{equation}
For example, from \eqref{eqnhatthetazeta} and  the automorphy 
\begin{equation}\label{eqnautomorphyoftheta}
\theta(z+1)=-\theta(z)\,,\quad 
\theta(z+\tau)=-e^{-\pi \mathbf{i}\tau}e^{-\pii z}\theta(z)\,,
\end{equation}
one has the quasi-ellipticity and almost-ellipticity behavior 
\begin{eqnarray}\label{eqne1automorphy}
	&&e_{1}(z+1)=e_{1}(z)\,,\quad 
e_{1}(z+ \tau)=e_{1}(z)-\pii\,,\nonumber\\
	&&\widehat{e}_{1}(z+1)=\widehat{e}_{1}(z)\,,\quad 
\widehat{e}_{1}(z+ \tau)=\widehat{e}_{1}(z)\,.
\end{eqnarray}

The generators $\{e_{m}(z)\}_{m\geq 1}, \{\widehat{e}_{m}(z)\}_{m\geq 1}$ are actually  quasi-Jacobi and almost-Jacobi forms of index zero with the weight given by the grading. See  \cite{Eichler:1985,  Libgober:2011, Goujard:2016counting}
for more details.

We shall frequently make use of the following relation without explicit mentioning
\[
\widehat{\mathfrak{F}}\subseteq  \widetilde{\mathfrak{F}}\otimes \mathbb{C}[\mathbold{A}]\,.
\]
The map $\widehat{\mathfrak{F}}\rightarrow\widetilde{\mathfrak{F}} $, obtained by taking the degree zero term  in the polynomial expansion in $\mathbold{A}$, offers an isomorphism of these two rings.
We call this map the ``holomorphic limit'' and denote it by $\lim_{\mathbold{A}=0}$;
its inverse map is called the ``elliptic completion''.

\begin{rem}\label{remchangeofgeneratorsforringofalmostellipticfunctions}

Since $\{\mathbold{B}_{m}\}_{m\geq 0},\{\widehat{\mathbold{B}}_{m}\}_{m\geq 0}$ are the complete Bell polynomials
in the generators $\{\mathcal{E}^{*}_{m}\}_{m\geq 1}$, $\{\widehat{\mathcal{E}}^{*}_{m}\}_{m\geq 1}$, the above 
rings can alternatively defined by taking the generators to be $\{\mathcal{E}^{*}_{m}\}_{m\geq 1},\{\widehat{\mathcal{E}}^{*}_{m}\}_{m\geq 1}$, respectively. 	
While the latter sets are more convenient in analyzing properties relate to non-meromorphicity (such as holomorphic anomaly \cite{Li:2022regularized}) due to the fact that $\{\mathcal{E}^{*}_{m}\}_{m\geq 2}$ are all meromorphic, the former sets can sometimes be more convenient for actual computations due to the simple pole structure of them (see Lemma 	\ref{lempolarparts} below),
 as we shall see in Section \ref{seciteratedintegrals}.

\end{rem}

\subsubsection{Singularities and differential relations of Laurent coefficients}

We recall some simple facts about the Laurent coefficients $\widehat{e}_{m}(z),m\geq 1$, mostly from \cite{Brown:2011}.
For self-containedness, we also give their short proofs. \\

Due to \eqref{eqne1automorphy} and  \cite[Equation A.10]{Zhou:2023GWgeneratingseries} (cf. \cite[Proposition 1.25]{Bannai:2010}),
${e}_{m},m\geq 2$ is smooth at $z\in \mathbb{Z}$,
while ${e}_{1}=Z$ has a simple pole  with residue $1$ at $z\in \mathbb{Z}$.
Note that since the ${e}_{m}$'s are not almost-elliptic, these functions do not descend to functions on $E$.

We have the following more precise statement regarding the singularities of $\widehat{e}_{m},m\geq 1$.

\begin{lem}	\label{lempolarparts}
			Let the notation be as above. 
	\begin{enumerate}[i).]
		\item 
		
		The functions 	$
		\widehat{e}_{m}(z)$ are elliptic in $z$, satisfying 	$
		\widehat{e}_{m}(-z)=(-1)^{m}\widehat{e}_{m}(z)$. 
		\item 
		The function $\widehat{e}_{m}(z),m\geq 1$ is smooth everywhere except for a simple pole along $z\in \Lambda_{\tau}$. 
		More precisely, the polar part of $\widehat{e}_{m}(z)$ at $z=0$ is
		\[
		{1\over z} {({\mathbold{Y}\bar{z}})^{m-1}\over (m-1)!}\,.
		\]
		\end{enumerate}
		\end{lem}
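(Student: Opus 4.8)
The plan is to read off both assertions from the generating function $\widehat{S}_c(z)$ of Definition \ref{dfnSzegokernel} and transfer them to the coefficients through the identity $\widehat{e}_m(z) = [c^{m-1}]\widehat{S}_c(z)$, which holds because Definition \ref{dfnSzegokernel} gives $\widehat{S}_c(z) = \sum_{m\geq 0} c^{m-1}\widehat{e}_m(z)$. Since $\widehat{S}_c(z)$ is meromorphic in $c$ with only a simple pole at $c=0$ coming from $1/\theta(c)$, comparison of Laurent coefficients in $c$ is legitimate, so any functional relation in $z$ satisfied by $\widehat{S}_c$ uniformly in $c$ descends to each $\widehat{e}_m$.

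For part (i), I would first verify that $\widehat{S}_c(z)$ is elliptic in $z$ directly from the automorphy \eqref{eqnautomorphyoftheta} together with the behaviour of $\mathbold{A}$ under $\Lambda_\tau$. Under $z\mapsto z+1$ the factors $\theta(z+c)$ and $\theta(z)$ each pick up $-1$ and $\mathbold{A}(z+1)=\mathbold{A}(z)$, so invariance is immediate. Under $z\mapsto z+\tau$ the ratio $\theta(z+c)/\theta(z)$ acquires $e^{-\pii c}$, while a short computation gives $\mathbold{A}(z+\tau)=\mathbold{A}(z)+\pii$, so $e^{c\mathbold{A}(z)}$ acquires the compensating $e^{\pii c}$ and the product is invariant; this cancellation of quasi-periods is exactly why $\widehat{e}_m$ is elliptic whereas $e_m$ is not, cf. \eqref{eqne1automorphy}. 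For the parity I would establish the single identity $\widehat{S}_c(-z)=-\widehat{S}_{-c}(z)$, using $\theta(-z)=-\theta(z)$, $\theta(-z+c)=-\theta(z-c)$, $\mathbold{A}(-z)=-\mathbold{A}(z)$ and $\theta(-c)=-\theta(c)$; comparing the coefficient of $c^{m-1}$ on both sides, and noting $-(-1)^{m-1}=(-1)^m$, yields $\widehat{e}_m(-z)=(-1)^m\widehat{e}_m(z)$.

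For part (ii) the key device is to split $\mathbold{A}(z)=\mathbold{Y}\bar{z}-\mathbold{Y}z$ into its antiholomorphic and holomorphic parts and write $\widehat{S}_c(z)=e^{c\mathbold{Y}\bar{z}}\cdot\bigl(e^{-c\mathbold{Y}z}S_c(z)\bigr)$. The bracketed factor is meromorphic in $z$, and since $S_c(z)=\theta(z+c)/(\theta(z)\theta(c))\sim 1/z$ as $z\to 0$ (using $\theta(z)\sim z$ from \eqref{eqnthetazetaexpansion}) while $e^{-c\mathbold{Y}z}\to 1$, it has principal part $1/z$ at $z=0$ and is otherwise holomorphic near the origin. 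Hence the polar part of $\widehat{S}_c(z)$ at $z=0$ is $z^{-1}e^{c\mathbold{Y}\bar{z}}$, the remaining antiholomorphic factor being smooth; expanding $e^{c\mathbold{Y}\bar{z}}=\sum_{k\geq 0}c^{k}(\mathbold{Y}\bar{z})^{k}/k!$ and extracting the coefficient of $c^{m-1}$ gives the claimed polar part $z^{-1}(\mathbold{Y}\bar{z})^{m-1}/(m-1)!$. Smoothness of $\widehat{e}_m$ away from $\Lambda_\tau$ follows since $\widehat{S}_c(z)$ is real-analytic away from the zeros of $\theta(z)$, and ellipticity from part (i) propagates the pole at $0$ to every lattice point. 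Alternatively, part (ii) can be read off from the binomial identity \eqref{eqnbinomialidentity} together with the already-recorded fact that $e_1=Z$ has a simple pole of residue $1$ at lattice points while $e_m$, $m\geq 2$, is smooth there.

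The step I expect to require the most care is the precise meaning of ``polar part'' in part (ii) for the non-holomorphic function $\widehat{e}_m$: one must justify that the antiholomorphic exponential $e^{c\mathbold{Y}\bar{z}}$ genuinely factors out as a smooth multiplier and that the cross terms (those involving the holomorphic part $-\mathbold{Y}z$ of $\mathbold{A}$, and the higher-order corrections to $\theta(z)\sim z$) are absorbed into the smooth remainder rather than contributing to the principal part. Making this bookkeeping rigorous, ideally by defining the polar part as the unique $\bar{z}$-dependent coefficient $a(\bar{z})/z$ in a local expansion $\widehat{e}_m=a(\bar{z})/z+(\text{smooth})$, is the one genuinely delicate point; the remaining manipulations are routine.
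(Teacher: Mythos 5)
Your proposal is correct and follows essentially the same route as the paper: ellipticity and the parity identity $\widehat{S}_{c}(-z)=-\widehat{S}_{-c}(z)$ are read off from the automorphy of $\theta$ and the oddness of $\mathbold{A}$, and the polar part is extracted from the generating series by evaluating the smooth factor at $z=0$. Your explicit factorization $\widehat{S}_{c}(z)=e^{c\mathbold{Y}\bar{z}}\bigl(e^{-c\mathbold{Y}z}S_{c}(z)\bigr)$ is just a spelled-out version of the paper's one-line computation ${1\over z}\bigl({\theta(z+c)\over\theta(c)}e^{c\mathbold{A}}\bigr)\big|_{z=0}={1\over z}e^{c\mathbold{Y}\bar{z}}$, which rests on the same observation that $z$ and $\bar{z}$ may be treated as independent, so the ``delicate point'' you flag is handled identically in both arguments.
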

	\begin{proof}
	
		\begin{enumerate}[i).]
	
	\item 
	The ellipticity follows from that of their generating series $\widehat{S}_{c}(z)$ which can be proved directly using the automorphy of $\theta$.
	The parity follows from the parity of $\mathbold{A}(z), \theta(z)$ which gives 
	$\widehat{S}_{c}(-z)=-\widehat{S}_{-c}(z)$.
	\item 
	The claim that $\widehat{e}_{m}$ is smooth and has at worst simple poles along $z\in\Lambda_{\tau}$ on $E$ follows from the fact that the generating series $\widehat{S}_{c}(z)$ of the $\widehat{e}_{m}$'s is so.
The claim about its polar part follows from the following formula for the generating series of the polar parts of all $\widehat{e}_{m}$'s
\[
{1\over z}\cdot \left({\theta(z+c)\over \theta(c)}e^{c\mathbold{A}}\right)|_{z=0}
={1\over z}\cdot e^{c\mathbold{Y}\bar{z}}\,,
\]
where we have used the fact that $z,\bar{z}$ are independent complex quantities.

			\end{enumerate}
	\end{proof}

We next study some  differential and quadratic relations between the $\widehat{e}_{m}(z)$'s.

\begin{dfn}\label{dfnfm}
		Suppose $c_{i},i\in [N]$ are constants  which are not necessarily distinct. 
For a sequence
	\[
	\mathrm{m}=(m_{1},\cdots ,m_{i},\cdots,m_{N})\in \mathbb{N}^{N}\,,
	\]
define
\[
\widehat{f}_{\mathrm{m}}:=\prod_{i=1}^{N}\widehat{e}_{m_{i}}(z+c_{i})\,,\quad 	|\mathrm{m}|:=\sum_{i=1}^{N}m_{i}\,.
\]
Define also	
\[
\mathrm{m}\ominus 1:=
\bigoplus_{i=1}^{N} (m_{1},\cdots,m_{i}-1,\cdots, m_{N})\,,\quad 
\mathrm{m}\ominus \ell:=	\mathrm{m}\underbrace{\ominus 1\ominus 1\cdots \ominus 1}_{~\ell~\text{times}}\,~\text{for}~	\ell\in \mathbb{N}\,.
\]	

\end{dfn}

\begin{lem}	\label{lempartialrelation}
	Let the notation be as above.
	\begin{enumerate}[i).]
		\item 
		One has
		\[
		{\partial}_{\bar{z}}\widehat{e}_{m}(z)=\mathbold{Y}\,\widehat{e}_{m-1}(z)\,,\quad m\geq 1\,.
		\]

		\item
		
		Suppose $m_{i}\geq 1, i\in [N]$ and $c_{i},i\in [N]$ are constants which are not necessarily distinct. 
		Denote $\mathrm{r}=(m_{1},\cdots, m_{N-1})\in \mathbb{N}^{N-1}$.
		Then 
		\begin{eqnarray*}\label{eqnbarpartialprimitiveYfm}
			\mathbold{Y}\widehat{f}_{\mathrm{m}}&=&
		{\partial}_{\bar{z}}
		\left(	\sum_{\ell\geq 0}
			(-1)^{\ell}
			\widehat{f}_{\mathrm{r}\ominus\ell}\cdot 
			\widehat{e}_{m_{N}+\ell+1}\right)\\
			&=&	{\partial}_{\bar{z}}\left(
			\sum_{k_{1},\cdots, k_{N-1}}
			(-1)^{\sum_{i=1}^{N-1}k_{i}}
			\prod_{i=1}^{N-1}
			\widehat{e}_{m_{i}-k_{i}}(z+c_{i})\cdot
			\widehat{e}_{m_{N}+\sum_{i=1}^{N-1}k_{i}+1}(z+c_{N})\right)
			\,.
		\end{eqnarray*}
		\item

		Assume $x\neq y$ modulo $\Lambda_{\tau}$,
		then one has
		\begin{eqnarray*}
		\widehat{e}_{i}(x)\widehat{e}_{j}(y)
		=\sum_{k+\ell=i+j}{\ell-1\choose i-k}\widehat{e}_{k}(x-y)\widehat{e}_{\ell}(y)
		+\sum_{k+\ell=i+j}{k-1\choose i-1}\widehat{e}_{k}(x)\widehat{e}_{\ell}(y-x)\,,
		\end{eqnarray*}
	where ${b\choose a},a,b\in \mathbb{Z}$ are the generalized binomial numbers, e.g., 
	${-1\choose a}=(-1)^{a}$.
	
		In particular, one has for $m\geq 1$		
		\[
	\widehat{e}_{1}(x)\widehat{e}_{m}(y)=
	\widehat{e}_{1}(x-y)\widehat{e}_{m}(y)
	+m\widehat{e}_{m+1}(y)
	+\sum_{k+\ell=1+m}\widehat{e}_{k}(x)\widehat{e}_{\ell}(y-x)
	\,.
	\]
		
	\end{enumerate}
	
\end{lem}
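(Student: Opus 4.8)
The plan is to derive all three statements from the single generating identity for the completed Kronecker function $\widehat{S}_c(z)=e^{c\mathbold{A}(z)}\theta(z+c)/(\theta(z)\theta(c))$, whose Laurent coefficients are the $\widehat{e}_m$. For part i), I would observe that among the three factors of $\widehat{S}_c$ only $e^{c\mathbold{A}(z)}$ depends on $\bar z$, and since $\mathbold{A}=\mathbold{Y}(\bar z-z)$ is linear with $\partial_{\bar z}\mathbold{A}=\mathbold{Y}$, one obtains at the level of generating series
\[
\partial_{\bar z}\widehat{S}_c(z)=c\,\mathbold{Y}\,\widehat{S}_c(z).
\]
Extracting the coefficient of $c^{m-1}$ on both sides (using $\widehat{S}_c=\sum_{m\ge0}c^{m-1}\widehat{e}_m$ and $\widehat{e}_0=1$) yields $\partial_{\bar z}\widehat{e}_m=\mathbold{Y}\widehat{e}_{m-1}$ for all $m\ge1$. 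Because each $c_i$ is a constant, the same relation holds for the shifted coefficients $\widehat{e}_m(z+c_i)$.

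For part ii), I would first record the Leibniz consequence of part i), namely $\partial_{\bar z}\widehat{f}_{\mathrm m}=\mathbold{Y}\,\widehat{f}_{\mathrm m\ominus1}$, and then apply $\partial_{\bar z}$ directly to the proposed primitive $P=\sum_{\ell\ge0}(-1)^\ell\,\widehat{f}_{\mathrm r\ominus\ell}\,\widehat{e}_{m_N+\ell+1}$. Using part i) on both factors of each summand gives $\partial_{\bar z}P=\mathbold{Y}\sum_{\ell\ge0}(-1)^\ell(b_{\ell+1}+b_\ell)$ with $b_\ell:=\widehat{f}_{\mathrm r\ominus\ell}\,\widehat{e}_{m_N+\ell}$; this alternating sum telescopes to $b_0=\widehat{f}_{\mathrm m}$, so $\partial_{\bar z}P=\mathbold{Y}\widehat{f}_{\mathrm m}$ as claimed. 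The sum defining $P$ is finite since $\widehat{f}_{\mathrm r\ominus\ell}=0$ once $\ell>|\mathrm r|$ (as $\widehat{e}_m=0$ for $m\le-1$), and the second displayed form in the statement is merely the expansion of $\mathrm r\ominus\ell$ into its multinomial pieces. This part is thus essentially bookkeeping once part i) is available.

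Part iii) is the substantial one, and the plan is to lift the classical Fay/Kronecker three-term identity
\[
S_{c_1}(x)S_{c_2}(y)=S_{c_1+c_2}(x)S_{c_2}(y-x)+S_{c_1+c_2}(y)S_{c_1}(x-y)
\]
(see e.g. \cite{Zagier:1991, Brown:2011}) to the completed kernel. Since $\mathbold{A}$ is additive, a direct check shows the completion factors conspire so that each term on the right carries the same exponential $e^{c_1\mathbold{A}(x)+c_2\mathbold{A}(y)}$ as the left, whence the identical identity holds verbatim for $\widehat{S}$. I would then expand both sides as Laurent series in $c_1,c_2$, extract the coefficient of $c_1^{i-1}c_2^{j-1}$, and convert $\widehat{e}_m(y-x)=(-1)^m\widehat{e}_m(x-y)$ using the parity in Lemma \ref{lempolarparts}; the two right-hand terms then produce respectively the $\widehat{e}(x-y)\widehat{e}(y)$-sum and the $\widehat{e}(x-y)\widehat{e}(x)$-sum in the statement.

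The main obstacle is the careful treatment of the simple pole of $\widehat{S}_{c_1+c_2}$ at $c_1+c_2=0$: extracting coefficients forces a choice of expansion of $(c_1+c_2)^{b-1}$, and the pole contribution $(c_1+c_2)^{-1}=\sum_{\ell\ge0}(-1)^\ell c_1^\ell c_2^{-\ell-1}$ is exactly what generates the alternating inner sums $\sum_\ell(-1)^\ell\binom{\cdot}{\cdot}$. After collecting terms, the stated coefficients reduce, via the elementary partial-row identities $\sum_{\ell=0}^i(-1)^\ell\binom{b}{j+\ell}=\binom{b-1}{b-j}$ and $\sum_{\ell=0}^i(-1)^\ell\binom{a}{i-1-\ell}=\binom{a-1}{i-1}$ valid in the relevant ranges $0\le a,b\le i+j$, to the single binomials coming from the naive extraction; verifying these identities and matching the summation ranges (including the $a=0,b=0$ pole terms) is where the care is needed. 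The displayed special case $i=1$ serves as a consistency check, the inner sums collapsing and reproducing the three stated terms.
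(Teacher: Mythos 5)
Your proof is correct, and parts i) and ii) are in substance the paper's argument: i) is the same generating-series computation from $\partial_{\bar z}\widehat{S}_c(z)=c\,\mathbold{Y}\,\widehat{S}_c(z)$, and for ii) the paper runs your telescoping in reverse, building the primitive by iterated integration by parts, $\mathbold{Y}\widehat{f}_{\mathrm{r}}\widehat{e}_{m_N}=\partial_{\bar z}\bigl(\widehat{f}_{\mathrm{r}}\widehat{e}_{m_N+1}\bigr)-\mathbold{Y}\widehat{f}_{\mathrm{r}\ominus 1}\widehat{e}_{m_N+1}$, iterated until $\widehat{f}_{\mathrm{r}\ominus\ell}$ vanishes, rather than differentiating the proposed primitive and telescoping as you do; the two are equivalent, and incidentally your intermediate $\partial_{\bar z}\widehat{f}_{\mathrm{m}}=\mathbold{Y}\,\widehat{f}_{\mathrm{m}\ominus 1}$ carries the factor $\mathbold{Y}$ that is missing (a typo) in the paper's corresponding display.

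Part iii) is where you genuinely diverge, after the common starting point: both proofs invoke Fay's trisecant identity and lift it to $\widehat{S}$ via additivity of $\mathbold{A}$ (the paper asserts this lift, you verify it). The paper then multiplies through by $ab(a+b)$ so that only \emph{Taylor} coefficients need comparing, which yields the recursion $\widehat{e}_i(x)\widehat{e}_j(y)=-\widehat{e}_{i-1}(x)\widehat{e}_j(y)+\sum_{k+\ell=i+j}\binom{\ell}{j}\widehat{e}_k(x-y)\widehat{e}_\ell(y)+\sum_{k+\ell=i+j}\binom{k}{i-1}\widehat{e}_k(x)\widehat{e}_\ell(y-x)$, and iterates it $i$ times; the iteration is exactly what manufactures the alternating inner sums $\sum_{\ell}(-1)^\ell\binom{\cdot}{\cdot}$ appearing in the statement. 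You instead keep the pole of $\widehat{S}_{c_1+c_2}$ and expand in the region $|c_1|<|c_2|$, so $(c_1+c_2)^{k-1}=\sum_{p\ge 0}\binom{k-1}{p}c_1^pc_2^{k-1-p}$ with $\binom{-1}{p}=(-1)^p$ absorbing the $k=0$ pole term; extracting the coefficient of $c_1^{i-1}c_2^{j-1}$ is legitimate (such Laurent expansions are unique and each coefficient is a finite sum) and produces the single binomials $\binom{a-1}{i-1}$ and $\binom{b-1}{b-j}$. The partial-row identities you then need are correct: $\sum_{\ell=0}^{i}(-1)^\ell\binom{a}{i-1-\ell}=\binom{a-1}{i-1}$, and $\sum_{\ell=0}^{i}(-1)^\ell\binom{b}{j+\ell}=(-1)^i\binom{b-1}{i+j}+\binom{b-1}{j-1}=\binom{b-1}{b-j}$ since $\binom{b-1}{i+j}=0$ in the range $1\le b\le i+j$, with the $a=0,b=0$ cases read via $\binom{-1}{p}=(-1)^p$. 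So the two answers agree. What the paper's route buys is that no choice of expansion region is ever needed and the output matches the statement verbatim; what your route buys is a cleaner closed form, showing in passing that the statement's alternating inner sums collapse to single binomial coefficients. Both are complete proofs.
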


\begin{proof}
	\begin{enumerate}[i).]
		\item 
		The first part of the claim is essentially  \cite[Equation 3.6]{Brown:2011}. It follows by comparing the
		Laurent coefficients
		of the following identity, proved
		by a direct computation using the definition of 	$\widehat{S}_{c}(z)$ given in Definition \ref{dfnSzegokernel} and the fact that $S_{c}(z)$ is meromorphic in $z$,
		\begin{equation}\label{eqnbarpartialhatScz} 
			{\partial}_{\bar{z}}	(	\widehat{S}_{c}(z))=c
			{\partial}_{\bar{z}}\mathbold{A}     \cdot \widehat{S}_{c}(z)
			=c
			\mathbold{Y} \widehat{S}_{c}(z)\,.
		\end{equation}

	\item 
		From (i) and the product rule, we have
	\[
	{\partial}_{\bar{z}}\widehat{f}_{\mathrm{r}}=
	\widehat{f}_{\mathrm{r}\ominus 1}\,.
	\]
	Integration by parts gives
	\[
	\mathbold{Y}\widehat{f}_{\mathrm{r}}\widehat{e}_{m_{N}}
	=\widehat{f}_{\mathrm{r}}\cdot {\partial}_{\bar{z}}\widehat{e}_{m_{N}+1}
	={\partial}_{\bar{z}}(\widehat{f}_{\mathrm{r}}\widehat{e}_{m_{N}+1})-
	\mathbold{Y}\widehat{f}_{\mathrm{r}\ominus 1}\widehat{e}_{m_{N}+1}\,.
	\]
	Applying this identity iteratively to the part involving $\mathbold{Y}$ and noting that $\widehat{f}_{\mathrm{r}\ominus \ell}=0$ for $\ell>|\mathrm{r}|$ (since $\widehat{e}_{m}=0$ for $m<0$), we obtain the 
	first identity. The second identity follows directly.

		\item 
		This is part of the quadratic Arnold-type relations given in \cite[Equation 3.7]{Brown:2011} which are derived by using Fay's trisecant identity.
		To be more precise, Fay's trisecant identity gives
		\[
		S_{a}(x)S_{b}(y)=S_{a}(x-y)S_{a+b}(y)+S_{a+b}(x)S_{b}(y-x)\,,
		\]
		and thus the identity for the $\widehat{S}$ version.
		Multiplying the resulting identity throughout by $ab(a+b)$ and 
		comparing the Taylor coefficients in $a,b$, one obtains
		\[
		\widehat{e}_{i}(x)\widehat{e}_{j}(y)=-\widehat{e}_{i-1}(x)\widehat{e}_{j+1}(y)
		+\sum_{k+\ell=i+j}
		{\ell\choose i-k} \widehat{e}_{k}(x-y)\widehat{e}_{\ell}(y)
		+\sum_{k+\ell=i+j}
		{k\choose i-1} \widehat{e}_{k}(x)\widehat{e}_{\ell}(y-x)\,.
		\]

		Iterating, one has
		\begin{eqnarray*}
			\widehat{e}_{i}(x)\widehat{e}_{j}(y)&=&
			\sum_{r=0}^{i}(-1)^{r}
			\sum_{k+\ell=i+j}
			\left(
			{\ell\choose i-r-k} \widehat{e}_{k}(x-y)\widehat{e}_{\ell}(y)+{k\choose i-r-1} \widehat{e}_{k}(x)\widehat{e}_{\ell}(y-x)\right)\\
			&=&
			\sum_{k+\ell=i+j}\widehat{e}_{k}(x-y)\widehat{e}_{\ell}(y)
			\sum_{r=0}^{i}(-1)^{r}
			{\ell\choose i-r-k} \\
			&+&
			\sum_{k+\ell=i+j} \widehat{e}_{k}(x)\widehat{e}_{\ell}(y-x)\sum_{r=0}^{i}(-1)^{r}{k\choose i-r-1}\,.
		\end{eqnarray*}
	Simplifying using the elementary identity $\sum_{r=0}^{i}(-1)^{r}{N\choose r}=(-1)^{i}{N-1\choose i}$ for $N\geq 0, i\geq 0$, we obtain the desired claim.
	Alternatively, one can prove the claim by comparing the Laurent coefficients 
		in Fay's trisecant identity.
		
	\end{enumerate}
\end{proof}

\begin{rem}\label{remrdifferentialandquadraticrelations}~
	\begin{enumerate}[i).]
	\item 
	The generating series technique also gives rise to formulas expressing $\partial_{z}\widehat{e}_{m}$
	in terms of elements in $\widehat{\mathfrak{F}}=\mathbb{C}\big[\{\widehat{e}_{m}\}_{m\geq 1}\big]=\mathbb{C}\big[\{\widehat{\mathcal{E}}^{*}_{m}\}_{m\geq 1}\big]$
	that are parallel to \eqref{eqnEm}.
	To be more explicit, 
	from \eqref{eqngeneratingseriesofEms} and \eqref{eqnEm*}, \eqref{eqnEm} we obtain 
	\begin{eqnarray*}
		\partial_{z}\widehat{S}_{c}(z)
		&=&\widehat{S}_{c}(z)\cdot \left(
		-c\mathbold{Y}+\sum_{m\geq 1}(-1)^{m} \mathcal{E}_{m+1} c^{m}
		\right)\\
		&=&{1\over c}
		\sum_{m\geq 0}{c^{m}}\widehat{e}_{m}
		\cdot \left(
		\sum_{m\geq 1}{c^{m}\over m!} (\widehat{\mathcal{E}}_{m+1}^{*}-m !\cdot 2\widehat{G}_{m+1}-\delta_{m,0}\mathbold{A}(z))
		\right)\,.
	\end{eqnarray*}
	Comparing Laurent coefficients in $c$ gives a desired relation
	\begin{equation}\label{eqnpartialzem}
		\partial_{z}\widehat{e}_{m}(z)
		=\sum_{\substack{a+b=m\\ b\geq 1}}\widehat{e}_{a}(z)\cdot  {1\over b!}(\widehat{\mathcal{E}}_{b+1}^{*}-b!\cdot 2\widehat{G}_{b+1})\,.
	\end{equation}
	This lands in the ring 
	$
	\widehat{M}\otimes\widehat{\mathfrak{F}}
	$,
	where $\widehat{M}=\mathbb{C}[\{\widehat{G}_{k}\}_{k\geq 1 }]$ is the ring of almost-holomorphic modular forms \cite{Kaneko:1995}
	with the generators $\{\widehat{G}_{k}\}_{k\geq 1}$ introduced in Definition \ref{dfnhatthetazeta}.	
	The ring $\widehat{M}$ is
	 isomorphic to $\mathbb{C}[\widehat{G}_{2}, \widehat{G}_{4},\widehat{G}_{6}]$ after quotienting out the relations among the generators.

		\item 
	Setting $m=1$ in Lemma \ref{lempartialrelation} (iii) and
	using Definition \ref{dfnEisensteinKroneckerseries} and  \eqref{eqnfirstfewhatems}, 
	one arrives at the familiar addition formula
	\[
	(\zeta(x)+\zeta(y)+\zeta(-x-y))^{2}
	=	\wp(x)+\wp(y)+\wp(-x-y)\,.
	\]
		\end{enumerate}
\end{rem}

\section{Iterated integrals of Eisenstein-Kronecker forms}
\label{seciteratedintegrals}

	In this part we compute iterated regularized integrals of Eisenstein-Kronecker forms and discuss their connections to Chen's iterated path integrals.\\

We first  construct some differential forms in terms of the  linear coordinate $z$ on the universal cover $\mathbb{C}$ of $E$.
A holomorphic volume form on $E$ is given by $dz$, and a volume form $\mathrm{vol}$ satisfying $\int_{E}\mathrm{vol}=1$ is
\begin{equation}
	\mathrm{vol}:= { \mathbf{i} \over 2 \im \tau}dz\wedge d\bar z\,.
\end{equation}
We also denote 
\begin{equation}\label{eqnpsibeta}
	\beta={d\overline{z}-dz\over \bar{\tau}-\tau}\,.
\end{equation}
Note that the cohomology class of $\beta$ in $H^{1}(E,\mathbb{C})$ is the Poincar\'e dual of the $A$-cycle class in the weak Torelli marking $\{[A],[B]\}$.\\

Recall that  $A_{E}^{1}(\star D)$ is the space of 1-forms  
that are smooth everywhere on $E$ except for possible holomorphic poles along an effective divisor $D$ on $E$.
One can introduce a notion \cite[Definition 2.3]{Felder:2017} (see also \cite[Definition 2.11]{Li:2020regularized}) of holomorphic residue $\mathrm{res}_{\partial}$
that acts on $A_{E}^{1}(\star D)$.
Concretely, this is given by
\begin{equation}\label{eqnholomorphicresidue}
	\mathrm{res}_{\partial}={1\over 2\pi \mathbf{i}}\lim_{\varepsilon\rightarrow 0}\int_{\partial B_{\varepsilon}(D)}\quad \,,
\end{equation}
where $B_{\varepsilon}(D)$ is a disk bundle of radius $\varepsilon$ over the divisor $D$. The result is independent  of the choice of
the local defining equation for $D$ and the Hermitian metric that is compatible with the complex structure.
In particular, from the description of the right hand side of \eqref{eqnholomorphicresidue} one has 
\cite[Lemma 2.2]{Li:2020regularized}
\begin{equation}\label{eqnvanishingofholomorphicresiduebytypereasonscaseDol0}
	\mathrm{res}_{\partial}(\omega)=0\,,\quad \forall\, \omega\in A^{0,1}_{E}(\star D)\,.
\end{equation}
An analytic notion of regularized integral 
\begin{equation}
\dashint_{E}: \, A_{E}^{2}(\star D)\rightarrow \mathbb{C}
\end{equation}
is introduced \cite{Li:2020regularized}, extended and further developed in \cite{Li:2022regularized}.
It extends the ordinary notion of integral on smooth forms and provides a nice regularization scheme that enjoys many nice properties such as the Fubini theorem.
A purely cohomological formulation of this notion is provided in \cite[Theorem A]{Zhou:2023cohomologicalpairings}.\\


For a function $f\in \Omega^{0}_{E}(\star D)$,
by slightly deforming the $A$-cycle $A$ if necessary, we can always assume that $A$ has no intersection with $D$.
When no confusion might arise we adapt short-hand notation such as
\begin{equation}
\dashint_{E} f:=\dashint_{E} f\,\mathrm{vol}\,,\quad  \int_{A}f:=\int_{A}f dz\,.
\end{equation}
Similarly, we write
$\mathrm{res}_{\partial,\,z=p}(f)$
for the residue  $\mathrm{res}_{\partial,\,z=p}(f dz)$  of the corresponding differential $fdz$ at $z=p$, and write $\mathrm{res}_{\partial}(f)$ for the summation of all local residues.

Set $[n]:=(1,2,\cdots, n)$, denote $E_{[n]}:=E_{n}\times E_{n-1}\times\cdots\times E_{1}$, where 
$E_{k}$ represents a copy of elliptic curve equipped with the corresponding linear coordinate $z_{k}, k\in [n]$.
For a function $f\in \Omega_{E_{[n]}}^{0}(\star D)$, we use the following short-hand notation for the iterated residues, iterated $A$-cycle integrals, and iterated regularized integrals
\begin{eqnarray}\label{eqnconventionforiteratedintegrals}
	\mathrm{res}_{\partial}(f)&:=&\mathrm{res}_{\partial}\,dz_{n}\cdots
	\mathrm{res}_{\partial}\,dz_{2} \,\mathrm{res}_{\partial}\,(fdz_{1}) \,,\\\quad 
	\int_{A_{\sigma([n])}}f&:=&\int_{A_{\sigma(n)}}\,dz_{\sigma(n)}\cdots
	\int_{A_{\sigma(2)}}\,dz_{\sigma(2)} \int_{A_{\sigma(1)}}f\,dz_{\sigma(1)} \,,\nonumber\\
	\dashint_{E_{\sigma([n])}}f
	&:=&	\dashint_{E_{\sigma(n)}}\mathrm{vol}_{\sigma(n)}\cdots \dashint_{E_{\sigma(2)}}\mathrm{vol}_{\sigma(2)} \dashint_{E_{\sigma(1)}}f\,\mathrm{vol}_{\sigma(1)}\,,\quad \sigma\in \mathfrak{S}([n])\nonumber\,.
\end{eqnarray}
Here the integration domain $A_{\sigma([n])}$ is constructed such that it avoids the singularities of $f$, with
\[
A_{i}=\{\varepsilon_{i}\tau+\tau+t~|~t\in [0,1]\}\,,\quad \varepsilon_{\sigma(n)}<\cdots<\varepsilon_{\sigma(2)}<\varepsilon_{\sigma(1)}\,,
\]
for sufficiently small positive real numbers $\varepsilon_{i},i\in [n]$.
Due to the meromorphicity of $f$ and the residue theorem, the iterated $A$-cycle integrals are invariant
under small deformations of the  $\varepsilon_{i}$ parameters. The integration domain in $\dashint_{E_{\sigma([n])}}$ can also be constructed explicitly \cite[Section 3.4]{Li:2020regularized}. and the regularized integral extends to an operator on 
$f\in A_{E_{[n]}}^{0}(\star D)$  \cite[Section 2.5]{Li:2020regularized}.	
Note that unlike the iterated residues and iterated $A$-cycle integrals, the iterated regularized integral in \eqref{eqnconventionforiteratedintegrals} is independent of the ordering of the iteration \cite[Theorem 2.37]{Li:2020regularized} and is therefore simply called the regularized integral.

\subsection{Eisenstein-Kronecker forms}
\label{secEKforms}

Since by Lemma \ref{lempolarparts} one has
$
\widehat{e}_{m}(z)\in A_{E}^{0}(\star 0)$,
one can then consider their holomorphic residues and regularized integrals, as defined  in \cite{Li:2020regularized} and further developed in
\cite{Li:2022regularized, Zhou:2023cohomologicalpairings}.\\

We first establish the following simple result which 
indicates
that the set of generators $\{\widehat{e}_{m}(z)\}_{m\geq 1}$, which have at worst simple poles,
is an ideal one (in contrast to e.g., $\{\widehat{\mathcal{E}}^{*}_{m}(z)\}_{m\geq 1}$) for exact computations.

	\begin{dfn}\label{dfnEKforms}
		Let the notation and conventions be as above.
		Consider the following subvector space of $\widehat{\mathfrak{F}}=\mathbb{C}\big[\{\widehat{e}_{m}(z)\}_{m\geq 1}\big]$ consisting of the linear span of degree $0$ and $1$ elements 
\begin{equation}
	\mathcal{V}=\bigoplus_{m\geq 0}\mathbb{C}\,\widehat{e}_{m}(z)\,.
\end{equation}
		We call $	\mathcal{V}dz, 	\mathcal{V}\mathrm{vol}$ the space of Eisenstein-Kronecker forms.
	\end{dfn}

\begin{lem}\label{lemresidueregularizedintetgralofhatem}
Let the notation be as above.
Then one has
\[
\mathrm{res}_{\partial}\,(\widehat{e}_{m}(z)dz)=\delta_{m,1}\,,\quad
\dashint_{E}\,\widehat{e}_{m}(z)\mathrm{vol}=\delta_{m,0}\,.
\]
\end{lem}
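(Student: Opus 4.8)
The plan is to treat the two identities in turn, reducing both to the polar-part data already recorded in Lemma~\ref{lempolarparts}. For the residue I would argue directly from the definition \eqref{eqnholomorphicresidue}: only the polar part of $\widehat{e}_m$ can contribute, since the smooth part of an integrand is bounded while $\int_{\partial B_\varepsilon(0)}$ runs over a circle of shrinking length. By Lemma~\ref{lempolarparts}(ii) the polar part of $\widehat{e}_m(z)$ at $z=0$ is $\tfrac{1}{z}\tfrac{(\mathbold{Y}\bar z)^{m-1}}{(m-1)!}$. Parametrizing $\partial B_\varepsilon(0)$ by $z=\varepsilon e^{\mathbf{i}\theta}$ and letting $\varepsilon\to 0$, a form with polar part $h(\bar z)\,dz/z$ has holomorphic residue $h(0)$; here $h(\bar z)=(\mathbold{Y}\bar z)^{m-1}/(m-1)!$, so $h(0)=\delta_{m,1}$, giving $\mathrm{res}_\partial(\widehat{e}_m\,dz)=\delta_{m,1}$. (Equivalently, one may read this off at once from the generating series $\widehat{S}_c$, whose polar part $e^{c\mathbold{Y}\bar z}/z$ computed in the proof of Lemma~\ref{lempolarparts} has residue $1$.)

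For the regularized integral, the idea is to exhibit $\widehat{e}_m\,\mathrm{vol}$ as a $\bar\partial$-exact form and invoke the regularized Stokes theorem. By Lemma~\ref{lempartialrelation}(i) one has $\partial_{\bar z}\widehat{e}_{m+1}=\mathbold{Y}\,\widehat{e}_m$, so that, using $\mathbold{Y}=-\pi/\mathrm{im}\,\tau$ together with $\mathrm{vol}=\tfrac{\mathbf{i}}{2\,\mathrm{im}\,\tau}dz\wedge d\bar z$,
\[
\widehat{e}_m\,\mathrm{vol}=\frac{1}{\mathbold{Y}}(\partial_{\bar z}\widehat{e}_{m+1})\,\mathrm{vol}=\frac{\mathbf{i}}{2\pi}\,\bar\partial\big(\widehat{e}_{m+1}\,dz\big)\,.
\]
Since $\widehat{e}_{m+1}\,dz\in A^{1,0}_E(\star 0)$, the regularized Stokes formula \cite{Li:2020regularized}, in the form $\dashint_E\bar\partial\alpha=-2\pi\mathbf{i}\,\mathrm{res}_\partial\alpha$, combined with the residue just computed, yields
\[
\dashint_E\widehat{e}_m\,\mathrm{vol}=\frac{\mathbf{i}}{2\pi}\,(-2\pi\mathbf{i})\,\mathrm{res}_\partial\big(\widehat{e}_{m+1}\,dz\big)=\delta_{m+1,1}=\delta_{m,0}\,,
\]
as desired. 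I note that this single argument also covers the base case $m=0$, which can independently be checked since $\widehat{e}_0\equiv 1$ and $\int_E\mathrm{vol}=1$.

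The main obstacle I anticipate is bookkeeping rather than conceptual: getting the constant and sign in the regularized Stokes formula exactly right, and confirming that it applies to the real-analytic (non-meromorphic) forms $\widehat{e}_{m+1}\,dz$ with $\mathrm{res}_\partial$ the holomorphic residue of \eqref{eqnholomorphicresidue}. The $m=0$ case provides a clean consistency check that pins down both the sign and the normalization: there $\bar\partial(\widehat{e}_1\,dz)=\mathbold{Y}\,d\bar z\wedge dz$ is actually smooth, its ordinary integral over $E$ equals $-2\pi\mathbf{i}$, and this matches $-2\pi\mathbf{i}\,\mathrm{res}_\partial(\widehat{e}_1\,dz)$ with $\mathrm{res}_\partial(\widehat{e}_1\,dz)=1$. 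For $m\geq 1$ the form $\bar\partial(\widehat{e}_{m+1}\,dz)=\mathbold{Y}\,\widehat{e}_m\,d\bar z\wedge dz$ genuinely has a pole, so the full strength of the regularized Stokes theorem (not a mere reduction to an ordinary integral) is what is being used.
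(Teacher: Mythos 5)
Your proof is correct and takes essentially the same route as the paper: the paper packages the argument through the generating series $\widehat{S}_{c}(z)$, using $\bar{\partial}(\widehat{S}_{c}\,dz)=c\mathbold{Y}\,\widehat{S}_{c}\,d\bar{z}\wedge dz$ together with the Cauchy/residue formula for regularized integrals and then reads off Laurent coefficients, while you run the identical argument coefficient-by-coefficient via $\partial_{\bar{z}}\widehat{e}_{m+1}=\mathbold{Y}\,\widehat{e}_{m}$ (which is just the Laurent-coefficient form of the paper's identity). Your normalization $\dashint_{E}\bar{\partial}\alpha=-2\pi\mathbf{i}\,\mathrm{res}_{\partial}\alpha$ and the vanishing $\mathrm{res}_{\partial}(\bar{z}^{k}\,dz/z)=0$ for $k\geq 1$ both agree with what the paper uses, so there is no gap.
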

\begin{proof}
We prove the statements by computing the holomorphic residue $\mathrm{res}_{\partial}$ and regularized integral $\dashint_{E}$ of the generating series 
$\widehat{S}_{c}(z)$ of $\widehat{e}_{m}(z),m\geq 0$.
The statement on the holomorphic residue follows from Lemma \ref{lempolarparts}.
For the regularized integral, from  \eqref{eqnbarpartialhatScz} we have
\[
\bar{\partial}(\widehat{S}_{c}(z)dz)=c\cdot \pii \beta\wedge \widehat{S}_{c}(z)dz
=-c\cdot \pii  \widehat{S}_{c}(z)dz\wedge \psi
=-c\cdot \pii  \widehat{S}_{c}(z)\mathrm{vol}\,,
\]
and thus
\[
\dashint_{E}\,\widehat{S}_{c}(z)\mathrm{vol}=-{1\over \pii c}\dashint_{E}\bar{\partial}(\widehat{S}_{c}(z)dz)\,.
\]
According to the Cauchy integral formula \cite[Theorem 2.14]{Li:2020regularized} for the regularized integral (see also \cite[Section 2.2.2]{Zhou:2023cohomologicalpairings} for a review),
this gives 
\[
\dashint_{E}\,\widehat{S}_{c}(z)\mathrm{vol}={1\over c}\mathrm{res}_{\partial}\,(\widehat{S}_{c}(z)dz)={1\over c}
\mathrm{res}_{\partial} \, ({1\over z}e^{c\mathbold{A}}dz) \,.
\]
The desired result then follows from the following fact proved 
using \eqref{eqnholomorphicresidue}
\begin{equation}\label{eqnvanishingofzbarz-ndz}
\mathrm{res}_{\partial} \, ({\bar{z}^{k}\over z^{n}}dz)=0\,,\quad k\geq 1\,.
\end{equation}

\end{proof}

\begin{rem}\label{remholomorphicresidueofhatBm}
	Note that a smooth function with holomorphic poles can be even under the action $(z,\bar{z})\mapsto (-z,-\bar{z})$
	and have a  simple pole at $z=0$.
	For example, from \eqref{eqnfirstfewhatems} and \eqref{eqnthetazetaexpansion}, one can see that locally near $z=0$  
	one has 
	\[
	\widehat{e}_{1}^{2}(z)=\widehat{Z}^{2}(z)=
	{1\over z^2}+2\mathbold{Y} {\bar{z}\over z}+\cdots\,,
	\quad 
		\widehat{e}_{2}(z)={1\over 2}\cdot (2\mathbold{Y} {\bar{z}\over z}+\cdots)\,.
	\]
	Both of them have trivial holomorphic residues under $\mathrm{res}_{\partial}$.
	
	In fact, according to \eqref{eqnvanishingofzbarz-ndz},
	as long as the holomorphic residue operation $\mathrm{res}_{\partial}$ is concerned, monomials in the $\widehat{e}_{m}(z)$'s behave
	as if they were ordinary meromorphic functions. In particular,  from Lemma \ref{lempolarparts} and \eqref{eqnthetazetaexpansion} we have
	\begin{equation}\label{eqnresidueofemr}
\mathrm{res}_{\partial}\,(\widehat{e}_{m}^{N}dz)=0\,,\quad m\geq 2\,,\quad \quad 
\mathrm{res}_{\partial}\,(\widehat{e}_{1}^{N}dz)=
\sum_{\substack{(k_{1},\cdots, k_{N})\\ \sum_{i=1}^{N} (2k_{i}-1)=-1}}
\prod_{i=1}^{N}(-2\widehat{G}_{2k_{i}})\,,
\end{equation}
where we have set the convention $-2G_{0}=-1$.
\end{rem}

We  shall also study iterated residues and iterated regularized integrals of almost-elliptic functions of the following type.

\begin{dfn}\label{dfnfunctionspaceassociatedtoD}
	Let the notation be as above.
 Fix a set of generic values $c_{k,\ell},k,\ell\in [n]$.
	Consider the smooth hypersurface arrangement $\mathsf{D}$ in $E_{[n]}$ given by (the image under the quotient $\mathbb{C}_{[n]}\rightarrow E_{[n]}$)
\begin{equation}
	\mathsf{D}=\bigcup_{\substack{k,\ell \in [n]\\
		k\neq \ell}} \big\{s_{k,\ell}=0\big\}\,,\quad s_{k,\ell}:=c_{k,\ell}+z_{k}-z_{\ell}\,.
\end{equation}
	We also regard $s_{k,l}$ as defining a morphism $E_{[n]}\rightarrow E$.
Define the following spaces 
\begin{equation}
\mathcal{F}_{\mathsf{D}}=\bigotimes_{k,\ell\in [n]} s_{k,\ell}^{*}\widehat{\mathfrak{F}}\,,\quad 
\mathcal{V}_{\mathsf{D}}=\bigotimes_{k\in [n]}\left(\bigoplus_{\ell\in [n]} s_{k,\ell}^{*}\mathcal{V}\right)\,,\quad
\mathcal{V}_{\mathsf{D}}^{\circ}=\bigotimes_{k\in [n]}s_{k,k+1}^{*}\mathcal{V}\,,
\end{equation}
with the conventions $k\neq \ell, z_{n+1}=z_{1},c_{n,n+1}=c_{n,1}$.
In this work we call the latter two spaces of Eisenstein-Kronecker forms associated to $\mathsf{D}\subseteq E_{[n]}$, with convention \eqref{eqnconventionforiteratedintegrals} understood.

	\end{dfn}

One has the following bases of the vector spaces $\mathcal{F}_{\mathsf{D}}, \mathcal{V}_{\mathsf{D}}, \mathcal{V}_{\mathsf{D}}^{\circ}$ given in Table	\ref{table-bases}.
\begin{table}[h]
	\centering
	\caption{Bases for the vector spaces $
	\mathcal{F}_{\mathsf{D}}, 
		\mathcal{V}_{\mathsf{D}}, \mathcal{V}_{\mathsf{D}}^{\circ}$ \,.}
	\label{table-bases}
	\renewcommand{\arraystretch}{1.5} 
	\begin{tabular}{c|c}
		\hline
		vector space &   basis: $k,\ell\in [n]\,, \quad k\neq \ell$ \\
		\hline
		$\mathcal{F}_{\mathsf{D}}$  &  $\prod_{k=1}^{n}\prod_{\ell=1}^{n}\widehat{e}_{m_{k,\ell}}^{r_{k,\ell}}(s_{k,\ell})$,\quad $r_{k,\ell}\geq 0$	\\	
		$\mathcal{V}_{\mathsf{D}}$		&	$\prod_{k=1}^{n}\prod_{\ell=1}^{n}\widehat{e}_{m_{k,\ell}}^{r_{k,\ell}}(s_{k,\ell})$,\quad  $r_{k,\ell}\in \{0,1\}\,,\quad 
		\sum_{\ell=1}^{n}r_{k,\ell}\leq 1$\,	\\	
		$\mathcal{V}_{\mathsf{D}}^{\circ}$	& $\prod_{k=1}^{n}\widehat{e}_{m_{k,k+1}}^{r_{k,k+1}}(s_{k,k+1})$,\quad $r_{k,\ell}\in \{0,1\}$\,	\\	
		\hline
	\end{tabular}
\end{table}

Many constructions and results regarding these spaces are most easily formulated and visualized by the so-called \emph{indicating graphs} which we now introduce.

\begin{dfn}\label{dfnindicatinggraph}
	For any monomial $f=\prod_{k,\ell=1}^{n}\widehat{e}_{m_{k,\ell}}^{r_{k,\ell}}(s_{k,\ell})\in \mathcal{F}_{\mathsf{D}}$, its indicating graph $\Gamma(f)$ is defined to be the following  graph:
	\begin{itemize}
		\item it has $n$ vertices labeled by $k\in [n]$,
		\item for each factor $\widehat{e}_{m_{k,\ell}}^{r_{k,\ell}}(s_{k,\ell})$ with $k\neq \ell$, assign $r_{k,\ell}$ oriented edges starting at $k$ and ending at $\ell$ if $m_{k,\ell}\geq 1$,  and no edges if $m_{k,\ell}=0$. See Figure \ref{figure:indicatinggraphs} below for an illustration.
	\end{itemize}
	 \begin{figure}[h]
 	\centering
 		\centering
 	\includegraphics[scale=1]{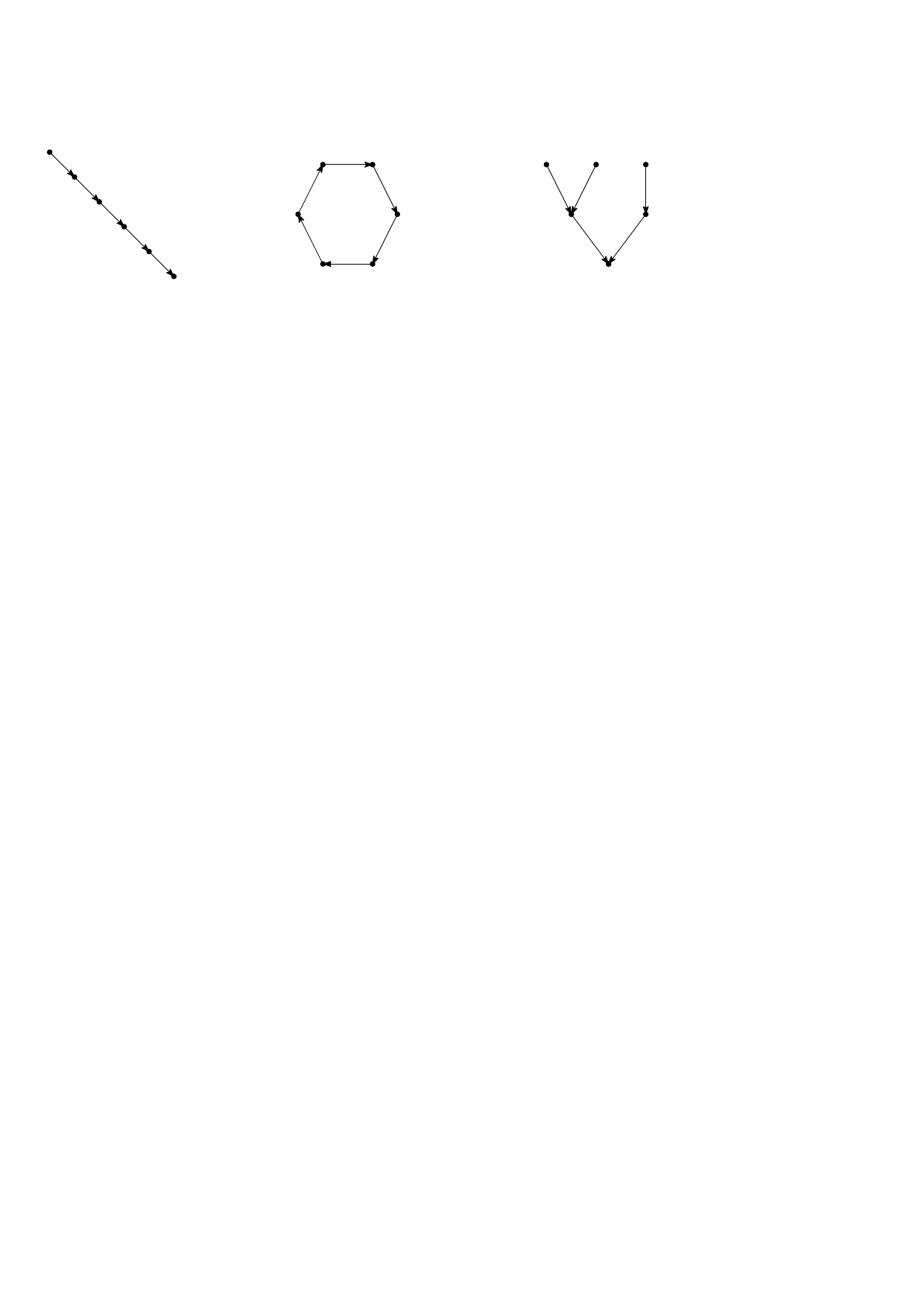}
 	\caption{Indicating graphs that are chain, loop, tree, respectively. Here the labellings of the vertices are omitted.}
 	\label{figure:indicatinggraphs}
  \end{figure}
  
For  each vertex $k$, let $v_{k,\mathrm{in}}, v_{k,\mathrm{out}}$
be the inner and outer valencies at $k$.
Since each edge contributes one inner valency and one outer valency, we have
\begin{equation}\label{eqntotalinnervalencies=totaloutervalencies}
\sum_{k=1}^{n} v_{k,\mathrm{in}}= \sum_{k=1}^{n} v_{k,\mathrm{out}}\,.
\end{equation}
	\end{dfn}

The geometric data of the monomial $f$ is conveniently encoded in the indicating graph $\Gamma(f)$. For example:
\begin{itemize}
	\item 
 the number of edges, given by $\delta_{m_{k,\ell}\geq 1}\cdot r_{k,\ell}$,
 corresponds to	the degree of the monomial $\widehat{e}_{m_{k,\ell}}^{r_{k,\ell}}(s_{k,\ell})$. 
This is also the \emph{multiplicity} of the irreducible component $(s_{k,\ell}=0)$ of the polar divisor of $\widehat{e}_{m_{k,\ell}}^{r_{k,\ell}}(s_{k,\ell})$. In particular,  $\bigotimes_{k,\ell\in [n]} s_{k,\ell}^{*}\mathcal{V}$  is the subspace
of $\mathcal{F}_{\mathsf{D}}$
that consists of elements whose corresponding polar divisors are reduced.
This space is of great interest in that it underlies 
a model for the cohomology of $E_{[n]}-\mathsf{D}$,
as shown in \cite{Brown:2011}.  
	\item
	the outer valency at the vertex $k$ is given by
	$ v_{k,\mathrm{out}}=\sum_{ \ell\neq k} \delta_{m_{k,\ell}\geq 1} r_{k,\ell}$.
	In particular, 
	\begin{equation}\label{eqnoutervalenciesforelementsinVD}
		f\in \mathcal{V}_{\mathsf{D}}\quad \Rightarrow \quad  v_{k,\mathrm{out}}\leq 1\,~\text{for}~ k\in [n]\,.
		\end{equation}

	\item 
	decomposition of $\Gamma(f)$ into connected components corresponds to 
	factorization of $f$ and thus also those of the operations $\mathrm{res}_{\partial},\dashint_{E_{[n]}}$ on $f$.
	In particular, the existence of a trivial connected component (i.e., one with only one vertex $b$ and no edges) represents the fact that $f$ is independent of $z_{b}$ and thus the $\dashint_{E_{b}}$ part in $\dashint_{E_{[n]}}$
gives the identity operator.\footnote{Thanks to the fact that iterated regularized integral
 $\dashint_{E_{[n]}}f$ is independent of the ordering for the iteration \cite[Theorem 2.37]{Li:2020regularized}.} 
	\end{itemize}

\subsection{Iterated residues and iterated regularized integrals}
\label{seccomputationsoniteratedresiduesandintegrals}

We next study iterated residues and regularized integrals for elements in $\mathcal{V}_{\mathsf{D}}$.
The more general case $\mathcal{F}_{\mathsf{D}}$ will not be discussed here since 
the combinatorics is more involved 
	(and can be described using rooted labeled forests as exhibited in \cite{Li:2022regularized}).

\subsubsection{Reduction from $\mathcal{V}_{\mathsf{D}}$ to $\mathcal{V}_{\mathsf{D}}^{\circ}$}

We first show the following simple fact based on combinatorial properties of the indicating graphs.

\begin{lem}\label{lemreduceVDtoVD0}
Let the notation be as above.
	\begin{enumerate}[i.)]
		\item 
		Let the notation be as above.
			Let $f=\prod_{k=1}^{n}\prod_{\ell=1}^{n}\widehat{e}_{m_{k\ell}}^{r_{k\ell}}(s_{k\ell})\in \mathcal{F}_{\mathsf{D}}$.
	  If there exists some vertex $b$ in $\Gamma(f)$ such that
	\[
	v_{b,\mathrm{in}}+v_{b,\mathrm{out}}=1\,,
	\]
	then $\dashint_{E_{[n]}}f=0$.
	\item 	
	Let $f=\prod_{k=1}^{n}\prod_{\ell=1}^{n}\widehat{e}_{m_{k\ell}}^{r_{k\ell}}(s_{k\ell})\in \mathcal{V}_{\mathsf{D}}$.
	Then 
	\[
	\dashint_{E_{[n]}}f\neq 0
	\]
	 only if any
	connected component of the indicating graph $\Gamma(f)$
	is  trivial (i.e., has no edges) or a loop.
	\end{enumerate}
	\end{lem}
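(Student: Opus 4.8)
The plan is to prove (i) first by integrating out the distinguished variable, and then to deduce (ii) as a purely combinatorial consequence of (i) together with the valence identity \eqref{eqntotalinnervalencies=totaloutervalencies}.

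For (i), suppose the vertex $b$ satisfies $v_{b,\mathrm{in}}+v_{b,\mathrm{out}}=1$, so that exactly one edge is incident to $b$. Since every factor $\widehat{e}_{m_{k,\ell}}(s_{k,\ell})$ with $m_{k,\ell}=0$ equals the constant $\widehat{e}_{0}=1$ and contributes no edge, the variable $z_{b}$ appears in precisely one nontrivial factor of $f$, namely $\widehat{e}_{m}(s)$ with $m\geq 1$, where $s=s_{b,\ell}$ if the incident edge is outgoing and $s=s_{k,b}$ if it is incoming. I would then invoke the order-independence (Fubini property) of the regularized integral to perform $\dashint_{E_{b}}$ first, treating all remaining variables as parameters. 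Writing $f=g\cdot \widehat{e}_{m}(s)$ with $g$ independent of $z_{b}$, this reduces the claim to showing $\dashint_{E_{b}}\widehat{e}_{m}(s)\,\mathrm{vol}_{b}=0$.

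The evaluation of this single integral is where the translation invariance of the regularized integral (immediate from its intrinsic construction) enters. When the incident edge is outgoing, $s=c_{b,\ell}+z_{b}-z_{\ell}$ is a translate of $z_{b}$, so translation invariance gives $\dashint_{E_{b}}\widehat{e}_{m}(s)\,\mathrm{vol}_{b}=\dashint_{E}\widehat{e}_{m}(z)\,\mathrm{vol}=\delta_{m,0}=0$ by Lemma \ref{lemresidueregularizedintetgralofhatem}, since $m\geq 1$. When the edge is incoming, $s=c_{k,b}+z_{k}-z_{b}=-(z_{b}-a)$ with $a=c_{k,b}+z_{k}$ constant in $z_{b}$; the parity $\widehat{e}_{m}(-z)=(-1)^{m}\widehat{e}_{m}(z)$ from Lemma \ref{lempolarparts} turns the integrand into $(-1)^{m}\widehat{e}_{m}(z_{b}-a)$, and translation invariance again yields $(-1)^{m}\delta_{m,0}=0$. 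In either case the inner integral vanishes identically in the parameters, so $\dashint_{E_{[n]}}f=0$.

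For (ii), I would argue combinatorially. By \eqref{eqnoutervalenciesforelementsinVD} every vertex of $\Gamma(f)$ has $v_{k,\mathrm{out}}\leq 1$, and by part (i) a nonvanishing integral forces $v_{k,\mathrm{in}}+v_{k,\mathrm{out}}\neq 1$ for all $k$, i.e. the total valence is $0$ or at least $2$. Fixing a nontrivial connected component $C$, every vertex of $C$ is incident to an edge, so its total valence is at least $1$, hence at least $2$. Since $\sum_{k\in C}v_{k,\mathrm{out}}$ counts the edges of $C$, applying \eqref{eqntotalinnervalencies=totaloutervalencies} to $C$ gives $2\,|E(C)|=\sum_{k\in C}(v_{k,\mathrm{in}}+v_{k,\mathrm{out}})\geq 2\,|V(C)|$, while $v_{k,\mathrm{out}}\leq 1$ gives $|E(C)|\leq |V(C)|$. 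Both inequalities must therefore be equalities, forcing $v_{k,\mathrm{in}}=v_{k,\mathrm{out}}=1$ at every vertex of $C$; a connected oriented graph with all in- and out-valences equal to $1$ is a single directed cycle, i.e. a loop. The main obstacle is the clean vanishing $\dashint_{E_{b}}\widehat{e}_{m}(s)\,\mathrm{vol}_{b}=0$ underlying (i): it hinges on $z_{b}$ occurring in only one nontrivial factor (so the remaining integrand is genuinely constant in $z_{b}$) and on the translation invariance of the regularized integral, which permits the affine substitution $s\mapsto z$ without boundary or anomaly contributions. Once (i) is secured, (ii) is elementary counting.
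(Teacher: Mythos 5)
Your proof is correct and follows essentially the same route as the paper's: part (i) reduces, via the Fubini property, to the single integral $\dashint_{E_{b}}\widehat{e}_{m}(s)\,\mathrm{vol}_{b}=\delta_{m,0}$ given by Lemma \ref{lemresidueregularizedintetgralofhatem} (you merely make the translation-invariance and parity bookkeeping explicit, which the paper leaves implicit), and part (ii) is the same valence-counting argument combining (i) with \eqref{eqntotalinnervalencies=totaloutervalencies} and \eqref{eqnoutervalenciesforelementsinVD}. The only cosmetic difference is that you phrase the counting as a double-counting equality on each nontrivial connected component, forcing $v_{k,\mathrm{in}}=v_{k,\mathrm{out}}=1$ everywhere, whereas the paper first reduces to connected graphs by factorization and then argues by contradiction via a vertex of total valence greater than $2$.
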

\begin{proof}
		\begin{enumerate}[i.)]
			
			\item 		
					
			Denote the other vertex of the unique edge at $b$ by $c$, then 
			the function $f$ must have the form
			\[
			f=\widehat{e}_{m}(s_{bc})g\quad \text{or} \quad \widehat{e}_{m}(s_{cb})g\,,
			\]
			for some $m\geq 1$ and function $g$ that is independent of $z_{b}$.
			Applying Lemma \ref{lemresidueregularizedintetgralofhatem} we see that
			\[
			\dashint_{E_{b}}f \mathrm{vol}_{b}= 0\,.
			\]
			This gives the desired claim.
		\item

	By the factorization property of the iterated regularized integration, 
	one can assume that $\Gamma(f)$ is already connected.
	There is nothing to prove if it has no edges. Otherwise, from \eqref{eqntotalinnervalencies=totaloutervalencies} and \eqref{eqnoutervalenciesforelementsinVD} we have
	 \[
	\sum_{k=1}^{n}( v_{k,\mathrm{in}}+v_{k,\mathrm{out}})=2 \sum_{k=1}^{n} v_{k,\mathrm{out}}\leq 2n\,.
	\]
	
	If there exists a vertex, say $a$, with 
	\[
	v_{a,\mathrm{in}}+v_{a,\mathrm{out}}>2\,,
	\]
	then there must exist another vertex $b$ such that
	\[
	v_{b,\mathrm{in}}+v_{b,\mathrm{out}}\leq 1\,.
	\]
	Since $\Gamma(f)$ is connected, one can not have
	$ v_{b,\mathrm{in}}+v_{b,\mathrm{out}}=0$. Thus
	$
	v_{b,\mathrm{in}}+v_{b,\mathrm{out}}=1
	$ 
	and by (i) the regularized integral $\dashint_{E_{[n]}}f$ is zero.
	Therefore, $\dashint_{E_{[n]}}f$ is nonzero only if
	\[
	v_{a,\mathrm{in}}+v_{a,\mathrm{out}}=2\,,\quad a\in [n]\,.
	\]
	This also tells that the number of edges is $2n/2=n$.
	It is now easy to see (for example by computing the Euler characteristic in two ways)
	that $\Gamma(f)$ is a loop.
	
		\end{enumerate}
\end{proof}

Lemma \ref{lemreduceVDtoVD0} allows 
to reduce
the study of regularized integral for elements in $\mathcal{V}_{\mathsf{D}}$
to that for elements in $\mathcal{V}_{\mathsf{D}}^{\circ}$.
However, this simplification does not apply directly to iterated residues since the latter depend on the ordering of iteration.

\subsubsection{Evaluation of iterated residues and iterated regularized integrals}
\label{subsecevaluation}

For later simplification, we introduce the following short-hand notation.
\begin{dfn}
Let the notation be as above. Define 
\[
\widehat{e}_{\mathrm{m}}=\prod_{k=1}^{n}\widehat{e}_{m_{k}}(s_{k,k+1})\,,\quad 
\mathrm{m}=(m_{1},m_{2},\cdots, m_{n})\in \mathbb{N}^{n}\,.
\]
Then according to Definition \ref{dfnfunctionspaceassociatedtoD} one has
\begin{equation}
\mathcal{V}_{\mathsf{D}}^{\circ}=\bigoplus_{\mathrm{m}\in \mathbb{N}^{n}}\mathbb{C}\,\widehat{e}_{\mathrm{m}}\,.
\end{equation}
The
number of nonzero entries in the sequence $\mathrm{m}$
is called the length and denoted by $\ell(\mathrm{m})$, while the quantity $|\mathrm{m}|:=\sum_{k=1}^{m}m_{k}$ is called the size of  $\mathrm{m}$.
They 
correspond to the degree and grading of $\widehat{e}_{\mathrm{m}}\in \mathcal{F}_{\mathsf{D}}$, respectively.

\end{dfn}

 Following the methods developed in \cite{Li:2020regularized, Li:2022regularized}, we prove the following results.
 	
 \begin{prop}\label{propiteratedcalculation}
 	Let the notation and conventions be as above. Let $n\geq 2$ and $z_{0}$ be a constant.
	Define
\[
	\xi_{\mathrm{chain}}=
\prod_{k=1}^{n-1}\widehat{e}_{m_{k}}(s_{k,k+1})\cdot \widehat{e}_{m_{n}}(z_{n}-z_{0})\,,\quad
\xi_{\mathrm{loop}}= \widehat{e}_{\mathrm{m}}\,.
\]
 	\begin{enumerate}[i).]
 		\item 
 		One has
 	\[
	\mathrm{res}_{\partial}\,(	\xi_{\mathrm{chain}}\cdot\bigwedge_{k=1}^{n}dz_{k})=\prod_{k=1}^{n}\delta_{m_{k},1}\,,\quad
 		\mathrm{res}_{\partial}\,(\xi_{\mathrm{loop}}\cdot\wedge_{k=1}^{n}dz_{k})=0\,.
\]
 		\item
 		One has
 		\begin{eqnarray*}	
 		\dashint_{E_{[n]}}	\xi_{\mathrm{chain}}\cdot \bigwedge_{k=1}^{n}\mathrm{vol}_{k}&=&\prod_{k=1}^{n}\delta_{m_{k},0} \,,\\  	 		
 			\dashint_{E_{[n]}}\xi_{\mathrm{loop}}\cdot\bigwedge_{k=1}^{n}\mathrm{vol}_{k}&=&
 			 			\widehat{e}_{|\mathrm{m}|}(\sum_{k=1}^{n}s_{k,k+1})\cdot
 			 			\left(	\prod_{k=1}^{n}\delta_{m_{k},0}-
 			 			\prod_{k=1}^{n}(\delta_{m_{k},0}-1)\right)\,.
 		 		\end{eqnarray*}

 	\end{enumerate}
 \end{prop}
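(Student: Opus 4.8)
The plan is to compute both the chain and loop cases by iterated application of the single-variable formulas in Lemma \ref{lemresidueregularizedintetgralofhatem}, processing the variables in a carefully chosen order and using the $\bar\partial$-relations of Lemma \ref{lempartialrelation} to reduce each integral to one of the known base cases. The essential point is that $\widehat e_{\mathrm m}=\prod_k \widehat e_{m_k}(s_{k,k+1})$ couples the variable $z_k$ only to its neighbors $z_{k-1},z_{k+1}$ through $s_{k-1,k}=c_{k-1,k}+z_{k-1}-z_k$ and $s_{k,k+1}=c_{k,k+1}+z_k-z_{k+1}$, so the indicating graph is literally a chain or a loop, and this sparse dependence is what makes the iteration tractable.

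For the chain residue statement I would integrate from the free end: since $\widehat e_{m_n}(z_n-z_0)$ depends on $z_n$ only through a single factor and $z_0$ is constant, the innermost operation $\mathrm{res}_{\partial}\,dz_n$ of $\widehat e_{m_n}(z_n-z_0)$ gives $\delta_{m_n,1}$ by Lemma \ref{lemresidueregularizedintetgralofhatem} (translation invariance of $\mathrm{res}_\partial$). After this factor is removed, vertex $n-1$ becomes a new free end and the argument repeats, peeling off $\delta_{m_k,1}$ at each step; the same peeling gives $\prod_k\delta_{m_k,0}$ for the regularized integral. For the loop residue, I would first invoke Lemma \ref{lemreduceVDtoVD0}(ii): a loop is a connected component with every vertex of total valency $2$, and more to the point I expect the residue to vanish because the cyclic structure forces, after repeated residue extraction, a factor of the form $\mathrm{res}_\partial\,(\widehat e_m(\text{single argument})\,dz)$ whose only surviving piece would contribute $\delta_{m,1}$ but whose accumulated constraint $\sum_k (m_k-1)$ cannot be met around a closed cycle; concretely I would reduce to the identity that the total residue of a form with a cyclically-closed argument vanishes by a type/translation argument as in \eqref{eqnvanishingofzbarz-ndz}.

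The genuinely substantive computation is the loop regularized integral. Here I would integrate one variable at a time, say $z_1$, which appears in exactly two factors $\widehat e_{m_n}(s_{n,1})$ and $\widehat e_{m_1}(s_{1,2})$. Using the product rule together with Lemma \ref{lempartialrelation}(i), $\partial_{\bar z_1}\widehat e_{m}=\mathbf Y\,\widehat e_{m-1}$, I would write the $z_1$-dependent product as a $\bar\partial$-primitive so that the Cauchy integral formula for the regularized integral (as in the proof of Lemma \ref{lemresidueregularizedintetgralofhatem}) converts $\dashint_{E_1}$ into a holomorphic residue $\mathrm{res}_{\partial,\,z_1}$. That residue, taken at the pole $s_{1,2}=0$ (equivalently $z_1=z_2-c_{1,2}$) or $s_{n,1}=0$, merges the two edges meeting at vertex $1$ into a single factor $\widehat e_{m_n+m_1}(s_{n,2})$, shortening the loop by one vertex while adding the two indices. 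Iterating this merge around the loop collapses it to a single factor $\widehat e_{|\mathrm m|}(\sum_k s_{k,k+1})$, and the combinatorial bookkeeping of which terms survive yields exactly the prefactor $\prod_k\delta_{m_k,0}-\prod_k(\delta_{m_k,0}-1)$.

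The main obstacle I anticipate is controlling the merge step cleanly: each application of the Cauchy/residue reduction produces, a priori, a sum of terms from the Laurent expansion of $\widehat e_m$ and from the $\mathbf Y$-corrections in Lemma \ref{lempartialrelation}(i), and I must verify that only the coincidence-of-arguments term $\widehat e_{m_n+m_1}$ persists while all non-meromorphic $\bar z$-tails die under $\mathrm{res}_\partial$ by \eqref{eqnvanishingofzbarz-ndz}. Equivalently, using the quadratic relation Lemma \ref{lempartialrelation}(iii) to rewrite $\widehat e_{m_n}(s_{n,1})\widehat e_{m_1}(s_{1,2})$ in a basis adapted to the variable $z_1$ being integrated, I would need to track the binomial coefficients through the full cyclic iteration and confirm they telescope to the stated prefactor. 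Getting the endpoint contribution right—so that the final collapsed argument is $\sum_k s_{k,k+1}$ rather than some shifted variant, and the constant $z_0$ (absent in the loop case because the cycle closes up) does not sneak in—is where the care is required; the generating-series approach via $\widehat S_c$ should streamline this and is how I would organize the proof to avoid term-by-term combinatorics.
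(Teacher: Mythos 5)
Your strategy for part (ii) is sound and, for the loop integral, coincides with the paper's own proof; but part (i) as you set it up contains a genuine error: you peel the chain from the wrong end. In $\xi_{\mathrm{chain}}$ the variable $z_{n}$ appears in \emph{two} factors, $\widehat{e}_{m_{n-1}}(s_{n-1,n})$ and $\widehat{e}_{m_{n}}(z_{n}-z_{0})$; the valency-one vertex is vertex $1$, whose variable $z_{1}$ sits in the single factor $\widehat{e}_{m_{1}}(s_{1,2})$. Hence $\mathrm{res}_{\partial}\,dz_{n}$ is \emph{not} $\delta_{m_{n},1}$ times the remaining factors: it is a sum of two pole contributions, exactly as in \eqref{eqnresidueofproductoftwo}. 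This matters because iterated residues, unlike regularized integrals, depend on the ordering of iteration (the paper stresses this right after Lemma \ref{lemreduceVDtoVD0}), and the ordering is fixed by convention \eqref{eqnconventionforiteratedintegrals} to begin with $z_{1}$. Concretely, for $n=2$ your ordering gives
\begin{equation*}
\mathrm{res}_{\partial}\,(\xi_{\mathrm{chain}}\,dz_{2})
=-\delta_{m_{1},1}\,\widehat{e}_{m_{2}}(z_{1}+c_{1,2}-z_{0})
+\delta_{m_{2},1}\,\widehat{e}_{m_{1}}(z_{1}+c_{1,2}-z_{0})\,,
\end{equation*}
whose residue in $z_{1}$ is $-\delta_{m_{1},1}\delta_{m_{2},1}+\delta_{m_{2},1}\delta_{m_{1},1}=0$, not $\delta_{m_{1},1}\delta_{m_{2},1}$; the stated formula holds only for the peeling that starts at $z_{1}$, where each successive variable occurs in exactly one factor. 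The loop residue argument has the same defect: Lemma \ref{lemreduceVDtoVD0} concerns regularized integrals and is explicitly inapplicable to iterated residues, and the ``accumulated constraint $\sum_{k}(m_{k}-1)$'' is not an argument. The correct mechanism, which your closing sentence only gestures at, is to iterate the two-pole merge \eqref{eqnresidueofproductoftwo}, shortening the loop by one vertex at each step, until the base case $n=2$, where the merged argument $s_{2,1}+s_{1,2}=c_{2,1}+c_{1,2}$ is a constant and the final residue therefore vanishes.

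For part (ii) the situation is better. The chain case is fine provided you peel from $z_{1}$ (Fubini for regularized integrals lets you choose the order), or simply cite Lemma \ref{lemreduceVDtoVD0}(i) as the paper does. Your loop computation is precisely the paper's route: flip $\widehat{e}_{m_{n}}(s_{n,1})=(-1)^{m_{n}}\widehat{e}_{m_{n}}(-s_{n,1})$ by Lemma \ref{lempolarparts}(i) so that both $z_{1}$-dependent factors have arguments of the form $z_{1}+\mathrm{const}$ (a step you omit, but which is needed before Lemma \ref{lempartialrelation}(ii) applies), take the $\bar{\partial}$-primitive, convert $\dashint_{E_{1}}$ into a residue by the Cauchy formula, and merge the two edges into $\widehat{e}_{m_{n}+m_{1}}(s_{n,1}+s_{1,2})$ with prefactor $(\delta_{m_{n},0}-\delta_{m_{1}\geq 1})$ as in \eqref{eqnregularizedintegraloftwo}. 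What you leave unverified is exactly the recursion bookkeeping --- the case split $m_{n}=0$ (which degenerates to the chain case) versus $m_{n}\geq 1$ (which produces $\prod_{k=1}^{n-1}(\delta_{m_{k},0}-1)$) --- so the substantive half of your proposal is the paper's argument with its combinatorial core left as a promise, while the residue half needs to be redone from the correct end.
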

The nomenclature of $\xi_{\mathrm{chain}},\xi_{\mathrm{loop}}$ 
originates from the types of the indicating graphs as shown in  Figure \ref{figure:indicatinggraphs}.

 \begin{proof}
  	\begin{enumerate}[i).]
 	\item 
 	The first relation follows by direct computations using  Lemma \ref{lemresidueregularizedintetgralofhatem}.

For the second relation, consider $C_{\mathrm{m}}=\mathrm{res}_{\partial}\,(\widehat{e}_{\mathrm{m}}\cdot\wedge_{k=1}^{n}dz_{k})$.
 	From
Lemma \ref{lemresidueregularizedintetgralofhatem} and the expression for $s_{k,l}$ given in Definition \ref{dfnfunctionspaceassociatedtoD}, we see that
\begin{eqnarray}\label{eqnresidueofproductoftwo}
	\mathrm{res}_{\partial}\,( \widehat{e}_{m_{1}}(s_{12})\widehat{e}_{m_{n}}(s_{n1})dz_{1})&=&\delta_{m_{1},1}\cdot\widehat{e}_{m_{n}}(s_{n1})|_{s_{12}=0}
	+(-1)^{m_{n}}\delta_{m_{n},1}\cdot\widehat{e}_{m_{1}}(s_{12})|_{s_{n1}=0}\nonumber\\
	&=& \delta_{m_{1},1}\cdot\widehat{e}_{m_{n}}(s_{n1}+s_{12})
	-\delta_{m_{n},1}\cdot\widehat{e}_{m_{1}}(s_{n1}+s_{12})\,.
\end{eqnarray}
Then using \eqref{eqnresidueofproductoftwo} we have the recursion
\[
C_{(m_1,m_{2},\cdots, m_{n})}=\delta_{m_{1},1}C_{(m_{2},\cdots, m_{n-1}, m_{n})}-\delta_{m_{n},1}C_{(m_{2},\cdots, m_{n-1},m_{1})}\,.
\]
The values for the initial case $n=2$ are
\[
C_{(m_1,m_{2})}=\mathrm{res}_{\partial}\,dz_{2}\left(\delta_{m_{1},1}\cdot\widehat{e}_{m_{n}}(s_{n1}+s_{12})
-\delta_{m_{n},1}\cdot\widehat{e}_{m_{1}}(s_{21}+s_{12})\right)=0\,,
\]
since $s_{21}+s_{12}$ is a constant independent of $z_{2}$.
Iterating, one proves  the desired claim.

 	\item
 	 	The first relation follows from Lemma \ref{lemreduceVDtoVD0}.

 		For the second relation, using 
 		Lemma \ref{lempolarparts} (i), Lemma \ref{lempartialrelation} (ii),  and the Cauchy integral formula \cite[Theorem 2.14]{Li:2020regularized} for the regularized integral as in Lemma \ref{lemresidueregularizedintetgralofhatem}, 
 		we have 
 	\begin{eqnarray*}
 		\dashint_{E_{1}}\widehat{e}_{m_{1}}(s_{12})\widehat{e}_{m_{n}}(s_{n1})\mathrm{vol}_{1}&=& (-1)^{m_{n}}
 			\dashint_{E_{1}}\widehat{e}_{m_{1}}(s_{12})\widehat{e}_{m_{n}}(-s_{n1})\mathrm{vol}_{1}\\
 			&=& (-1)^{m_{n}}\cdot {-1\over \pii}
 			\dashint_{E_{1}}
 			\bar{\partial}
 		\left(\sum_{k=0}^{m_{1}}(-1)^{k}\widehat{e}_{m_{1}-k}(s_{12})\widehat{e}_{m_{n}+1+k}(-s_{n1})dz\right)\\
 			&=&	 (-1)^{m_{n}}\mathrm{res}_{\partial}\,\left(\sum_{k=0}^{m_{1}}(-1)^{k}\widehat{e}_{m_{1}-k}(s_{12})\widehat{e}_{m_{n}+1+k}(-s_{n1})dz\right)\\ 			&=&	 -\mathrm{res}_{\partial}\,\left(\sum_{k=0}^{m_{1}}\widehat{e}_{m_{1}-k}(s_{12})\widehat{e}_{m_{n}+1+k}(s_{n1})dz_{1}\right) 			\,.
 \end{eqnarray*}

Using \eqref{eqnresidueofproductoftwo}, this gives
\begin{equation}\label{eqnregularizedintegraloftwo}
	\dashint_{E_{1}}\widehat{e}_{m_{1}}(s_{12})\widehat{e}_{m_{n}}(s_{n1})\mathrm{vol}_{1}
	=-\delta_{m_{1}\geq 1}\cdot \widehat{e}_{m_{n}+m_{1}}(s_{n1}+s_{12})
	+\delta_{m_{n},0}\cdot \widehat{e}_{m_{n}+m_{1}}(s_{n1}+s_{12})\,.
\end{equation}
 		Now set
 		 \[
 		C_{\mathrm{m}}={1\over \widehat{e}_{|\mathrm{m}|}(\sum_{k=1}^{n}s_{k,k+1})}	\dashint_{E_{[n]}}\widehat{e}_{\mathrm{m}}\cdot\bigwedge_{k=1}^{n}\mathrm{vol}_{k}\,.
 		\]
 		Then using \eqref{eqnregularizedintegraloftwo} one has the recursion
 		\[
 		C_{(m_{1},m_{2},\cdots,m_{n})}=(\delta_{m_{n},0}-\delta_{m_{1}\geq 1})
 		C_{(m_{2},\cdots,m_{n}+m_{1})}\,,
 		\]
 		with the values for the initial case $n=2$ determined from 
 		\[
 		\dashint_{E_{[2]}}\widehat{e}_{\mathrm{m}}\cdot\bigwedge_{k=1}^{2}\mathrm{vol}_{k}=\dashint_{E_{2}}(\delta_{m_{2},0}-\delta_{m_{1}\geq 1})\widehat{e}_{m_{2}+m_{1}}(s_{21}+s_{12})\mathrm{vol}_{2}
 		=(\delta_{m_{2},0}-\delta_{m_{1}\geq 1})\widehat{e}_{m_{2}+m_{1}}(s_{21}+s_{12})\,.
 		\]
 If $m_{n}=0$, then $\xi_{\mathrm{loop}}$ reduces to the chain type one $\xi_{\mathrm{chain}}$ discussed above and the result is $\prod_{k=1}^{n-1}\delta_{m_{k},0}$. Otherwise, we have
 		\[
 			C_{\mathrm{m}}=\prod_{k=1}^{n-1}(-\delta_{m_{k}\geq 1})=
 			\prod_{k=1}^{n-1}(\delta_{m_{k},0}-1)\,.
 		\]
 		Combining these results and noting that $\widehat{e}_{0}(z)=1$, this proves the desired claim. 		
 		
 \end{enumerate}
\end{proof}

\begin{rem}\label{remenlargespaceFD}
	
	\begin{enumerate}
		[i).]			
			\item 

	Strictly speaking, partial regularized integration such as the one in \eqref{eqnregularizedintegraloftwo} does not preserve the space  $\mathcal{F}_{\mathsf{D}}$: for example the polar divisor  $s_{n1}+s_{12}=0$ does not necessarily lie in $\mathsf{D}$.
		Thus one needs to enlarge the divisor $\mathsf{D}$ so that 
		it includes all divisors defined by any linear combination of the $s_{ij}$'s.
	This does not cause trouble since all existing results apply to the enlarged divisor straightforwardly.
	
	\item 
	The relation \eqref{eqnregularizedintegraloftwo} can be alternatively derived by reducing an iterated regularized integral of loop type to one of chain type as follows.
	From Lemma \ref{lempartialrelation} (iii), we see that\footnote{The same identity here can also be used to find the residue  of the differential, but now one picks up the $\widehat{e}_{1}$ terms instead of $\widehat{e}_{0}$ terms due to Lemma \ref{lemresidueregularizedintetgralofhatem}.}
	\begin{eqnarray*}
		\widehat{e}_{m_{1}}(s_{12})\widehat{e}_{m_{n}}(s_{n1})
		&=&
		(-1)^{m_{n}}\widehat{e}_{m_{1}+m_{n}}(s_{n1}+s_{12}) \delta_{m_{n},0}
		-\widehat{e}_{m_{1}+m_{n}}(s_{n1}+s_{12}) \\
		&&+\sum_{b=1}^{m_{1}+m_{n}}(-1)^{m_{n}-b}C_{+}(m_{1};m_{n},b)\widehat{e}_{m_{1}+m_{n}-b}(s_{n1}+s_{12})\widehat{e}_{b}(s_{n1})\\
		&&
		+(-1)^{m_{n}}\sum_{a=1}^{m_{1}+m_{n}}C_{-}(m_{1};m_{n},a)\widehat{e}_{m_{1}+m_{n}-a}(s_{n1}+s_{12}) \widehat{e}_{a}(s_{12})
		\,,
	\end{eqnarray*}
	where $C_{+},C_{-}$ are some combinatorial constants.
	The second and third lines on the right hand side above
	give rise to a linear combination of forms, each of which factorizes into
	\[
	\text{non-constant form of chain type}\times \text{form of loop type}\,.
	\]
	The resulting regularized integral is then zero due to the factorization of integrals and the vanishing result on chain type forms.
The regularized integral of the first line gives \eqref{eqnregularizedintegraloftwo} 
as desired.
	
		\end{enumerate}
\end{rem}

In the  calculations in Proposition \ref{propiteratedcalculation}, we need to use
the iterated residue formula \cite[Proposition 2.11]{Li:2022regularized} for the iterated regularized integral.
This requires computing 
the $\bar{\partial}$-primitive\footnote{The result  for the regularized integral is independent of the choice of the primitive by the global residue theorem, as can be seen from the residue formula for the regularized integral.} of the integrand
lying in the space $\mathcal{V}_{\mathsf{D}}$.
Here we have done this using
Lemma \ref{lempartialrelation} (ii) under the generators $\{\widehat{e}_{m}\}_{m\geq 1}$ for the ring $\widehat{\mathfrak{F}}$.
One can alternatively
use
the set of generators $\{\widehat{\mathcal{E}}_{m}^{*}\}_{m\geq 1}$, as is done  in studying properties related to non-meromorphicity (such as holomorphic anomaly in 
\cite{Li:2022regularized}) in which the addition formula for the only non-holomorphic generator $\widehat{\mathcal{E}}^{*}_{1}$ plays the role of  Lemma \ref{lempartialrelation} (iii) here.
The combinatorics involved in finding the $\bar{\partial}$-primitive
for these two sets  of generators are essentially the same, as exhibited in Lemma \ref{lempartialrelation} (ii).
However, for the latter set $\{\widehat{\mathcal{E}}_{m}^{*}\}_{m\geq 1}$, monomials in generators can have  much more complicated residues due to the higher order pole structure present in the generators. 

One encounters the same difficulty when considering iterated residues and iterated regularized integrals of elements in  $\mathcal{F}_{\mathsf{D}}$.

We demonstrate this through the following example.

\begin{ex}
	Consider $f=\widehat{e}_{1}^{r}(z),r\geq 1$. Using the  $\bar{\partial}$-primitive of integrands provided in		Lemma \ref{lempartialrelation} (ii), one has 
	\[
	\dashint_{E}\widehat{e}_{1}^{r}(z)\mathrm{vol}=\mathrm{res}_{\partial}\,
	\left(\sum_{\ell\geq 0}(-1)^{\ell} \widehat{e}_{1}^{r-1-\ell}(z)\widehat{e}_{\ell+2}(z)dz
	\right)\,.
	\]
This is not convenient for actual computations.
	We instead use the alternative $\bar{\partial}$-primitive as  in \cite[Lemma 2.6]{Li:2022regularized}
	\[
	\bar{\partial}\left({1\over r+1}\widehat{e}_{1}^{r+1}(z)dz\right)=\bar{\partial}\widehat{e}_{1}(z)\wedge \widehat{e}_{1}^{r}(z)dz=\pii d\bar{z}\wedge \widehat{e}_{1}^{r}(z)dz
	=-\pii \widehat{e}_{1}^{r}(z)\mathrm{vol}\,
	\]
	to obtain 
	\[
	\dashint_{E}\widehat{e}_{1}^{r}(z)\mathrm{vol}=\mathrm{res}_{\partial}\,
	\left({1\over r+1}\widehat{e}_{1}^{r+1}(z)dz
	\right)\,.
	\]
	Combining \eqref{eqnresidueofemr},
	this then leads to a formula for the desired regularized integral.
	The result is zero when $r$ is odd, as it should be the case
	by the parity reason  explained in Remark \ref{remholomorphicresidueofhatBm}.

	\xxqed
\end{ex}

\subsection{Iterated $A$-cycle integrals and Chen's iterated path integrals}
\label{seciteratedAcycleintegrals}

We now discuss iterated $A$-cycle integrals for elements in $\mathcal{V}_{\mathsf{D}}$
and their relations to Chen's iterated path integrals \cite{Chen:1977}.

\subsubsection{Relation between iterated $A$-cycle integrals and iterated regularized integrals}
\label{subsubsecrelationbetweenAcycleandregularizedintegrals}

We first recall a few results obtained in \cite{Li:2020regularized, Li:2022regularized, Zhou:2023GWgeneratingseries}  on the relation between iterated $A$-cycle integrals and iterated regularized integrals.

First, by the same reasoning in \cite[Lemma 4.12]{Zhou:2023GWgeneratingseries}, when acting on elements in $\mathcal{V}_{\mathsf{D}}$, one has from the simple pole structure that
\begin{equation}\label{eqnaveragedAcycleintegrals}
\mathrm{lim}_{\mathbold{A}=0}
\dashint_{E_{[n]}}={1\over n!}\sum_{\sigma\in \mathfrak{S}([n])}\int_{A_{\sigma([n])}}\,.
\end{equation}
Hence $\dashint_{E_{[n]}}$ can be obtained as the elliptic completion 
of iterated $A$-cycle integrals averaged over the orderings.

Conversely, denote
\begin{equation}
R^{i}_{0}=\int_{A}dz_{i}\,,\quad R^{i}_{j}=\mathrm{res}_{z_{i}=z_{j}} dz_{i}\,.
\end{equation}
When acting on meromorphic functions, these operators satisfy \cite[Lemma A.1]{Li:2022regularized}
\begin{equation}\label{eqncommutatorofRi0Rj0}
	R^{i}_{a} R^{j}_{a}-	R^{j}_{a} R^{i}_{a}=R^{j}_{a}R^{i}_{j}	
	\,,\quad i,j\in [N]\,,~a\in [N]\cup\{0\}\,,
\end{equation}
and
\begin{equation}\label{eqncommutatorofRk0Rij}
R^{k}_{0}R^{i}_{j}=R^{i}_{j}R^{k}_{0}\,,\quad \text{if}~\{i,j\}\cap \{k,0\}=\emptyset\,.
\end{equation}
Let
$\mathcal{R}$ be vector space generated by the operators $R^{i}_{0}, i\in [N]$ and $\mathrm{Lie}(\mathcal{R})$ be its free Lie algebra.
Applying the Poincar\'e-Birkhoof-Witt theorem 
\begin{equation}
	T^{\otimes }\mathcal{R}\cong 
	\mathcal{U}(\mathrm{Lie}(\mathcal{R}))\cong \mathrm{Sym}^{\otimes} \mathrm{Lie}(\mathcal{R})\,,
\end{equation}
one sees that
 any iterated $A$-cycle integral is a symmetric polynomial in the nested Lie brackets of the operators $R^{i}_{0},i\in [N]$.
By \eqref{eqncommutatorofRi0Rj0} and \eqref{eqncommutatorofRk0Rij}, one can show that  \cite[Section 3.4]{Li:2020regularized}  each nested Lie bracket takes the form $R^{i_{m}}_{0}R^{i_{m-1}}_{j_{m-1}}\cdots R^{i_{2}}_{j_{2}}R^{i_{1}}_{j_{1}}$ for some $i_{1},j_{1},\cdots, i_{m-1},j_{m-1},i_{m}\in [N]$.
This implies that  \cite[Theorem 3.9]{Li:2020regularized}  an iterated $A$-cycle integral can be expressed in terms of 
averaged  iterated $A$-cycle integrals of residues.
Applying \eqref{eqnaveragedAcycleintegrals}, it can in turn be expressed in terms of 
holomorphic limits of iterated regularized integrals of residues.

\subsubsection{Chen's iterated path integrals and KZB connection}

Following \cite[Section 1.1]{Calaque:2009}, 
let $\mathfrak{t}_{1,n}$ be the graded Lie algebra with generators $x_{i},y_{i}, i\in [n]$ of degree $1$ and  $t_{ij}, i\neq j\in [n]$, subject to the relations
\begin{eqnarray*}
&t_{ij}=t_{ji}\,,\quad [t_{ij}, t_{ik}+t_{jk}]=0\,,\quad [t_{ij},t_{k\ell}]=0\,,\\
&[x_{i},y_{j}]=t_{ij}\,,\quad [x_{i},x_{j}]=[y_{i},y_{j}]=0\,,\quad [x_{i},y_{i}]=-\sum_{k\neq i}t_{ik}\,,\\
& [x_{i},t_{jk}]=[y_{i}, t_{jk}]=0\,,\quad 
 [x_{i}+x_{j},t_{ij}]= [y_{i}+y_{j},t_{ij}]=0\,,
\end{eqnarray*}
where $i,j,k,\ell$ are distinct. 
The elements $\sum_{i=1}^{n} x_{i}, \sum_{i=1}^{n}y_{i}$ lie in the center of $\mathfrak{t}_{1,n}$
and one denotes by $\bar{\mathfrak{t}}_{1,n}$ the quotient of $\mathfrak{t}_{1,n}$ by the subspace generated by them.

Denote $\mathrm{ad}_{x}=[x,-]\in\mathrm{Der}(\mathfrak{t}_{1,n})$.
The Knizhnik-Zamolodchikov-Bernard (KZB) connection is given \cite[Theorem 3.6]{Calaque:2009} by $\nabla=d-\omega_{\mathrm{KZB}}$, with
\begin{eqnarray}\label{eqnKZBconnection}
\omega_{\mathrm{KZB}}
&=&\sum_{i=1}^{n}dz_{i}\otimes \left(-y_{i}+ \sum_{j\neq i} (S_{c}(z_{ij})-{1\over c}) |_{c=\mathrm{ad}_{x_{i}}} (t_{ij})\right)\,,\quad z_{ij}:=z_{i}-z_{j}\nonumber\\
&=&-\sum_{i=1}^{n}dz_{i}\otimes \sum_{i=1}^{n}y_{i} +\sum_{i=1}^{n}dz_{i}\otimes \left( \sum_{j\neq i} cS_{c}(z_{ij}) |_{c=\mathrm{ad}_{x_{i}}} (y_{j})\right)
\in  A^{1}(\mathrm{Conf}_{n}(E))\otimes \mathfrak{t}_{1,n}\,.
\end{eqnarray}
This gives a  flat connection on the principal bundle  $\mathcal{P}_{1,n}$ on $\mathrm{Conf}_{n}(E)$
whose Lie algebra is  $\mathfrak{t}_{1,n}$.
In fact, one has $d\omega_{\mathrm{KZB}}=\omega_{\mathrm{KZB}}\wedge \omega_{\mathrm{KZB}}=0$.
This connection descends to flat connections on the principal bundle  $\overline{\mathcal{P}}_{1,n}\rightarrow \mathrm{Conf}_{n}(E) $ 
and 
$\overline{\mathcal{P}}_{1,n}\rightarrow \mathrm{Conf}_{n}(E)/E\cong \mathrm{Conf}_{n-1}(E^{\star})   $,
 see \cite[Theorem 3.9]{Calaque:2009}.
 
Chen's $\pi_{1}$ de Rham theorem tells that homotopy invariant integrals on the path spaces of $E$ and $E^{\star}$
are given by  holonomies under the above connections, that is, by iterated path integrals of the corresponding  connections.
See \cite{Chen:1977, Levin:2007, Brown:2011} for more details on this.\\

For simplicity, we 
consider the fiber $F_{n}$ of the relative configuration space 
$\mathrm{Conf}_{n+1}(E)\rightarrow \mathrm{Conf}_{n}(E)$ which is isomorphic to  $E-\{n~\text{points}\}$.
Let $z_{n+1}$ be the coordinate along the fiber, and $z_{j},j\in [n]$ coordinates on the base. 
The KZB connection then induces  a connection $d-\omega_{F_{n}}$ over a bundle $\mathcal{P}_{F_{n}}$ on the fiber space $F_{n}$, with
\begin{eqnarray*}
	\omega_{F_{n}}
	&=&
	-dz_{n+1}\otimes \sum_{i=1}^{n+1}y_{i} +dz_{n+1}\otimes \sum_{j=1}^{n}cS_{c}(z_{n+1}-z_{j})|_{c=\mathrm{ad}_{x_{n+1}}} (y_{j})\\
	&=&
	-dz_{n+1}\otimes \sum_{i=1}^{n+1}y_{i} +dz_{n+1}\otimes cS_{c}(z_{n+1})|_{c=\mathrm{ad}_{x_{n+1}}}(\sum_{i=1}^{n+1}y_{i} )
	\\
	&&
	-dz_{n+1}\otimes 
	cS_{c}(z_{n+1})|_{c=\mathrm{ad}_{x_{n+1}}} y_{n+1}
	+dz_{n+1}\otimes \sum_{j=1}^{n}\left(cS_{c}(z_{n+1}-z_{j}) -cS_{c}(z_{n+1})\right) |_{c=\mathrm{ad}_{x_{n+1}}} (y_{j})\,.
\end{eqnarray*}
Applying the gauge transformation $e^{\mathbold{A}x_{n+1}}$ changes $\omega_{F_{n}}$ to
\begin{eqnarray}\label{eqnomegaFnhat}
	\widehat{\omega}_{F_{n}}&=&\pii \beta_{n+1}\otimes x_{n+1}+e^{\mathbold{A}\,\mathrm{ad}_{x_{n+1}}}\omega_{F_{n}}\\
	&=&
	-dz_{n+1}\otimes e^{\mathbold{A}\mathrm{ad}_{x_{n+1}}}(\sum_{i=1}^{n+1}y_{j})+\pii \beta_{n+1}\otimes x_{n+1}+dz_{n+1}\otimes \sum_{j=1}^{n}c\widehat{S}_{c}(z_{n+1}-z_{j})|_{c=\mathrm{ad}_{x_{n+1}}} (y_{j})\nonumber\,.
\end{eqnarray}
Note the induced flat connection valued in  $\bar{\mathfrak{t}}_{1,n}$ 
is exactly the one given in 
\cite[Section 5.3]{Brown:2011}.

Consider now Chen's iterated path integrals of $\widehat{\omega}_{F_{n}}$ which are related \cite{Brown:2011} to homotopy invariant integrals of the path space of $F_{n}$.
Recall that for a piecewise smooth path $\gamma:[0,1]\rightarrow M$ and smooth $1$-forms $\omega_{k}, k=1,2,\cdots, N$, 
the iterated path integral is defined by \cite{Chen:1977}
\begin{equation}
\int_{\gamma}\omega_{N}\cdots \omega_{1}:=\int_{0\leq t_{1}\leq t_{2}\leq \cdots \leq t_{N}\leq 1} 
 \gamma^{*}\omega_{N}(t_{N})\cdots
 \gamma^{*}\omega_{1}(t_{1})\,.
\end{equation}
We now take $M=F_{n}$, $\omega_{k}=\widehat{\omega}_{F_{n}}, k=1,2,\cdots, N$ and $\gamma$ such that it avoids the singularities of $\widehat{\omega}_{F_{n}}$. This leads to
\begin{equation}\label{eqnKZBiteratedintegral}
\int_{\gamma}\widehat{\omega}_{F_{n}}\cdots \widehat{\omega}_{F_{n}}
=
\int_{0\leq t_{1}\leq t_{2}\leq \cdots \leq t_{N}\leq 1} 
 \gamma^{*}\widehat{\omega}_{F_{n}}(t_{N})\cdots
 \gamma^{*}\widehat{\omega}_{F_{n}}(t_{1})\in F^{N}\mathcal{U}(\bar{\mathfrak{t}}_{1,n})\,,
\end{equation}
where $F^{\bullet}$ is the increasing filtration on the universal enveloping algebra $\mathcal{U}(\bar{\mathfrak{t}}_{1,n})$ defined by the length of words.
Using \eqref{eqnomegaFnhat} and Definition \ref{dfnSzegokernel}, we write
\[
 \gamma^{*}\widehat{\omega}_{F_{n}}(t) =
 \pii \beta(t)\otimes x_{n+1}
 + \sum_{j=1}^{n}\sum_{m\geq 0} \widehat{e}_{m}(t-z_{j})dt\otimes  \mathrm{ad}_{x_{n+1}}^{m}(y_{j})\in A^{1}(F_{n})\otimes \bar{\mathfrak{t}}_{1,n}\,.
\]
The iterated path integral 
\eqref{eqnKZBiteratedintegral} then gets simplified into
\begin{equation}\label{eqnKZBiteratedintegralexpansioninwords}
\int_{\gamma}\widehat{\omega}_{F_{n}}\cdots \widehat{\omega}_{F_{n}}=
\sum_{m_{k}\geq 0 }\sum_{j_{k}\geq 0}
\int_{\gamma}
\prod_{k=1}^{N} \widehat{e}_{m_{k}}(t_{k}-z_{j_{k}})dt_{k}\bigotimes \otimes_{k=1}^{N} \mathrm{ad}_{x_{n+1}}^{m_{k}}(y_{j_{k}})\in \mathcal{U}(\bar{\mathfrak{t}}_{1,n})\,,
\end{equation}
where the convention for $j_{k}=0$ is that
$\widehat{e}_{m_{k}}(t_{k}-z_{j_{k}})dt= \delta_{m_{k},0}\pii \beta$ and $  \mathrm{ad}_{x_{n+1}}^{m_{k}}(y_{j_{k}})= \delta_{m_{k},0}x_{n+1}$.


For the aforementioned $\gamma$,  one can also consider the ordinary integral (invoke the Fubini theorem)
\begin{equation}\label{eqnorderedAcycleintegralofKZB}
\int_{\gamma^{N}}\prod_{k=1}^{N} \widehat{e}_{m_{k}}(t_{k}-z_{j_{k}})dt_{k}
=\prod_{k=1}^{N}\int_{\gamma} \widehat{e}_{m_{k}}(t_{k}-z_{j_{k}})dt_{k}
\,.
\end{equation}
This corresponds to the iterated $A$-cycle integral with the choice 
$\varepsilon_{i}=\varepsilon_{1},i\geq 2$ in 
\eqref{eqnconventionforiteratedintegrals}, but with the integrand given by a smooth form instead of a meromorphic form. By working with the polar coordinates near the singularity one can see from Lemma \ref{lempolarparts} that the form $d(\widehat{e}_{m}(z)dz)\in A^{2}_{E}( 0)$ is in fact integrable on $E$. Applying Stokes theorem and a limiting argument to each factor in \eqref{eqnorderedAcycleintegralofKZB}, we see that the iterated $A$-cycle integral is invariant under small deformations of $\varepsilon_{1}$.\\

 We have the following result regarding the Chen iterated integrals in \eqref{eqnKZBiteratedintegralexpansioninwords}.
 \begin{lem}\label{lemKZBiteratedintegralanditeratedintegral}
 Let the notation be as above.
 Take $\gamma$ to be a (non-unique) $A$-cycle on $E$ avoiding the pole divisor of $\widehat{\omega}_{F_{n}}$
and consider the image of 
 $\int_{\gamma}\widehat{\omega}_{F_{n}}\cdots \widehat{\omega}_{F_{n}}$ in $\mathrm{gr}^{N}_{F} \mathcal{U}(\bar{\mathfrak{t}}_{1,n})$.
 Then the coefficient of the image of $ \otimes_{k=1}^{N} \mathrm{ad}_{x_{n+1}}^{m_{k}}(y_{j_{k}})$
is given by the iterated $A$-cycle integral $\int_{\gamma^{N}}\prod_{k=1}^{N} \widehat{e}_{m_{k}}(t_{k}-z_{j_{k}})dt_{k}$.
 \end{lem}
 \begin{proof}
 	A cycle $\gamma$ on $E$ avoiding the pole divisor of $\widehat{\omega}_{F_{n}}$
 	gives rise to a cycle on the space $F_{n}$.
 According to
 \eqref{eqnKZBiteratedintegralexpansioninwords},   
if we
take the path $\gamma$ be an $A$-cycle,  integrals involving $j_{k}=0$ terms  are eliminated since $\gamma^{*}\beta=0$. 
That is, the expression \eqref{eqnKZBiteratedintegralexpansioninwords} gives a genuine iterated integral along the path $\gamma$.

 Let us focus on the image of  $\int_{\gamma}\widehat{\omega}_{F_{n}}\cdots \widehat{\omega}_{F_{n}}$  
 in $\mathrm{gr}^{N}_{F} \mathcal{U}(\bar{\mathfrak{t}}_{1,n})$,
  we can pretend that $XY=YX$ for $X,Y\in  \bar{\mathfrak{t}}_{1,n}$ since $(XY-YX)F^{N-2} \mathcal{U}(\bar{\mathfrak{t}}_{1,n})=0\in \mathrm{gr}^{N}_{F} \mathcal{U}(\bar{\mathfrak{t}}_{1,n})$.
 The coefficient of the image of $ \otimes_{k=1}^{N} \mathrm{ad}_{x_{n+1}}^{m_{k}}(y_{j_{k}})$  in $\mathrm{gr}^{N}_{F} \mathcal{U}(\bar{\mathfrak{t}}_{1,n})$ then becomes, as claimed,
 \begin{eqnarray*}
 &&\sum_{\sigma\in \mathfrak{S}_{N}}
 \int_{0\leq t_{1}\leq t_{2}\leq \cdots\leq t_{N}\leq 1}
\prod_{k=1}^{N} \widehat{e}_{m_{\sigma(k)}}(t_{k}-z_{j_{\sigma(k)}})dt_{k}\\
&=&\int_{[0,1]^{N}  }
\prod_{k=1}^{N} \widehat{e}_{m_{k}}(t_{k}-z_{j_{k}})dt_{k}\\
&=&\int_{\gamma^{N}}\prod_{k=1}^{N} \widehat{e}_{m_{k}}(t_{k}-z_{j_{k}})dt_{k}\,.
 \end{eqnarray*}
 \end{proof}
 
Applying the discussions in Section \ref{subsubsecrelationbetweenAcycleandregularizedintegrals}, the iterated $A$-cycle integrals in Lemma \ref{lemKZBiteratedintegralanditeratedintegral} above can be further expressed in terms of iterated regularized integrals 
computed in Section \ref{seccomputationsoniteratedresiduesandintegrals}.

We can also take the path $\gamma$ to be the $B$-cycle, 
the same discussions lead to iterated $B$-cycle integrals. They are related to iterated $A$-cycle integrals in a nice way.
Indeed, 
by the reciprocity law applied to $\widehat{e}_{m}(z)dz,dz$, one  has (using Lemma \ref{lemresidueregularizedintetgralofhatem})
\[
\int_{B}\widehat{e}_{m}(z)dz=\tau \int_{A}\widehat{e}_{m}(z)dz+\mathrm{res}_{\partial}(z\widehat{e}_{m}(z)dz)=\tau \int_{A}\widehat{e}_{m}(z)dz\,.
\]

It seems interesting to find the image in $\mathrm{gr}_{F}^{\bullet}\, \mathcal{U}(\bar{\mathfrak{t}}_{1,n})$ besides the top component above.
In light of the relation \cite[Theorem 1.2]{Zhou:2023GWgeneratingseries} between iterated $A$-cycle integrals and GW invariants of elliptic curves, it would also be interesting to find realizations of more general iterated $A$-cycle integrals 
considered in  Section \ref{seccomputationsoniteratedresiduesandintegrals} in terms of Chen's iterated path integrals.

\section{Gromov-Witten generating series of elliptic curves}
\label{secGW}

In this section, we prove our main result Theorem \ref{mainthm1intro}
based on the results obtained in Section \ref{seciteratedintegrals}.
We also discuss some  combinatorial properties of the generating series of $\{\dashint_{E^{n}}\omega_{n}\}_{n\geq 1}$.

\subsection{Proof of the main theorem}\label{secproofofmainthm1intro}

Using the notation and conventions in earlier sections (e.g., Definition \ref{dfnringofquasiellipticfunctions}, Convention \eqref{eqnconventionforiteratedintegrals}),  Theorem 
\ref{mainthm1intro} is recalled as follows.

\begin{thm}\label{mainthm1}

	Let 
	\begin{equation}\label{eqnomegaC2n}
		\varpi_{n}=
		{\theta (\sum_{i=1}^{n} w_{i}) \over \prod_{i=1}^{n} \theta (w_{i})}\cdot
		{	\prod_{i<j }\theta (z_{i}+w_{i}-z_{j}-w_{j})\theta (z_{i}-z_{j}) \over  \prod_{i<j}
			\theta (z_{i}+w_{i}-z_{j}) \theta (z_{i}-w_{j}-z_{j})}
		\,.
	\end{equation}
	Then the regularized integral 
	 $\dashint_{E_{[n]}}\varpi_{n}$ satisfies
	\begin{equation}\label{eqnTngwpureweightstructure}
		\dashint_{E_{[n]}}\varpi_{n}=
		\sum_{j\in [n]}\sum_{\pi\in \Pi_{[n]\setminus \{j\}}}
	\prod_{k=1}^{\ell} (|\pi_{k}|-1)!\cdot \widehat{e}_{|\pi_{k}|}(\sum_{i\in \pi_{k}}w_{i})\,.
	\end{equation}
	
\end{thm}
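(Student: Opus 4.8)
The plan is to identify $\varpi_{n}$ with the $c$-residue of a Frobenius determinant built from the Kronecker theta function, and then to integrate the resulting cofactor expansion using the loop formula of Proposition \ref{propiteratedcalculation}. First I would invoke the Frobenius--Fay determinant identity for the odd theta $\theta$. Writing $q_{i}=z_{i}+w_{i}$ and $p_{i}=z_{i}$, a rearrangement of the theta-quotients in \eqref{eqnomegaC2n}, using only the oddness of $\theta$ to collect the sign $(-1)^{\binom{n}{2}}$, shows that
\begin{equation*}
\det\big[\,\widehat{S}_{c}(z_{i}+w_{i}-z_{j})\,\big]_{i,j=1}^{n}=\widehat{S}_{c}\Big(\sum_{i=1}^{n}w_{i}\Big)\cdot \varpi_{n}\,,
\end{equation*}
the elliptic completion following from $\widehat{S}_{c}=e^{c\mathbold{A}}S_{c}$ together with the telescoping $\sum_{i}(z_{i}+w_{i}-z_{\sigma(i)})=\sum_{i}w_{i}$ valid along every permutation $\sigma$. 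Since $\varpi_{n}$ is independent of $c$ and $\widehat{S}_{c}(\sum w)=c^{-1}+\widehat{e}_{1}(\sum w)+\cdots$ has $c$-residue $1$, taking the coefficient $[c^{-1}]$ yields the $c$-free identity $\varpi_{n}=[c^{-1}]\det[\widehat{S}_{c}(z_{i}+w_{i}-z_{j})]$.

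Next I would extract this residue by the matrix-determinant lemma. Expanding $\widehat{S}_{c}(z)=c^{-1}\widehat{e}_{0}(z)+\widehat{e}_{1}(z)+c\,\widehat{e}_{2}(z)+\cdots$, the matrix equals $c^{-1}J+A+O(c)$, where $J$ is the all-ones matrix and $A_{ij}=\widehat{e}_{1}(z_{i}+w_{i}-z_{j})$; because $J$ has rank one, only $A$ survives in the residue, giving $\varpi_{n}=\sum_{i,j\in[n]}\operatorname{cof}_{ij}(A)$ with $\operatorname{cof}_{ij}(A)$ the cofactor deleting row $i$ and column $j$. Applying $\dashint_{E_{[n]}}$ term by term then reduces the theorem to evaluating $\sum_{i,j}\dashint_{E_{[n]}}\operatorname{cof}_{ij}(A)$, each summand being a signed sum, over bijections $\phi\colon[n]\setminus\{i\}\to[n]\setminus\{j\}$, of the loop-type forms $\prod_{k\neq i}\widehat{e}_{1}(z_{k}+w_{k}-z_{\phi(k)})\in\mathcal{V}_{\mathsf{D}}$.

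The decisive step is the indicating-graph analysis. For $i\neq j$ the functional graph of any such $\phi$ consists of cycles together with a single oriented path from the source $j$ (in-valency $0$) to the sink $i$ (out-valency $0$); this path has a vertex of total valency $1$, so Lemma \ref{lemreduceVDtoVD0}(i) forces every off-diagonal contribution to vanish, leaving only the diagonal cofactors $i=j$. For $i=j$ the graph is a disjoint union of loops supported on a set partition $\pi$ of $[n]\setminus\{i\}$, and $\operatorname{cof}_{ii}(A)$ is independent of $z_{i}$; by the factorization of $\dashint_{E_{[n]}}$ and the loop formula of Proposition \ref{propiteratedcalculation}(ii), a single cycle of length $r=|\pi_{k}|$ with all edge-labels equal to $1$ integrates to $(-1)^{r+1}\widehat{e}_{r}(\sum_{i\in\pi_{k}}w_{i})$, whose sign cancels the cycle sign $(-1)^{r-1}$; summing over the $(|\pi_{k}|-1)!$ distinct cyclic orders on each block produces exactly the factor $(|\pi_{k}|-1)!\,\widehat{e}_{|\pi_{k}|}(\sum_{i\in\pi_{k}}w_{i})$. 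Collecting the $n$ choices of the distinguished vertex $i=j$ then gives the asserted sum over $j\in[n]$ and $\pi\in\Pi_{[n]\setminus\{j\}}$.

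I expect the main obstacle to be twofold. The first is verifying the elliptically completed Frobenius identity with the correct normalization and sign $(-1)^{\binom{n}{2}}$, rather than its classical holomorphic form, and checking that the completion factors assemble into $\widehat{S}_{c}(\sum w)$ exactly. The second is making the interchange of $[c^{-1}]$ with $\dashint_{E_{[n]}}$ and the term-by-term application of the loop formula rigorous within $\mathcal{V}_{\mathsf{D}}$, in particular ensuring that the enlarged divisor of Remark \ref{remenlargespaceFD} accommodates the new polar loci $z_{i}+w_{i}=z_{j}$. Once the off-diagonal vanishing is in place, the combinatorial collapse to the partition sum is essentially forced.
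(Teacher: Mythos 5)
Your proposal is correct and is essentially the paper's own proof: your cofactor identity $\varpi_n=\sum_{i,j}\operatorname{cof}_{ij}(A)$, obtained by extracting $[c^{-1}]$ from the completed Fay determinant $\det[\widehat{S}_c(z_i+w_i-z_j)]=\widehat{S}_c(\sum_i w_i)\,\varpi_n$, is exactly the expansion of the bordered determinant in the paper's Lemma \ref{lemFrobenius-Stickelbergeralmostelliptic}, and the remaining steps---killing the path/chain contributions via Lemma \ref{lemreduceVDtoVD0}(i) (equivalently the chain case of Proposition \ref{propiteratedcalculation}(ii)), evaluating each loop via Proposition \ref{propiteratedcalculation}(ii), and the $(|\pi_k|-1)!$ cycle count with the sign cancellation $(-1)^{|\pi_k|-1}\cdot(-1)^{|\pi_k|+1}=1$---coincide with the paper's treatment of the permutation sum organized by the cycle containing the index $0$. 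The only minor difference is that you obtain the elliptic completion directly from the telescoping of the $e^{c\mathbold{A}}$ factors along permutations, whereas the paper completes the holomorphic Frobenius--Stickelberger formula by an almost-ellipticity/holomorphic-limit argument; both are sound.
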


From \eqref{eqnautomorphyoftheta}, one can check that the function $\varpi_{n}$
is indeed elliptic in both $z_{i},w_{i}$, where the $z_{i}$'s are regarded as variables on $E_{[n]}$, while the $w_{i}$'s as nonzero parameters.

Denote the zero and polar divisor of $\varpi_{n}$ by
$\mathsf{D}_{0},\mathsf{D}_{\infty}$ respectively.
The divisor $\mathsf{D}_{\infty}$ is a smooth hypersurface arrangement divisor (in fact a  simple normal crossing divisor for generic values of the $w_{i}$ parameters). In particular, this fits in the setting of
cohomological regularized integral in 
\cite{Zhou:2023cohomologicalpairings}, according to
which the regularized integral $	\dashint_{E_{[n]}}\varpi_{n}$
above is in fact purely cohomological.\\

In what follows, we shall evaluate 	$\dashint_{E_{[n]}}\varpi_{n}$ using the results established in Section \ref{seciteratedintegrals}.
To reduce the difficulty caused by 
the complexity of the polar divisor $\mathsf{D}_{\infty}$, we first need to split $\varpi_{n}
$ into a sum of forms with simpler polar divisors in a way similar to the one provided by Lemma 	\ref{lempartialrelation} (iii).
This is possible thanks to the following version of Fay's multi-secant identity.
 
\begin{lem}\label{lemFrobenius-Stickelbergeralmostelliptic}
Let the notation be as above. Then one has 
		\begin{equation} \label{eqnFrobenius-Stickelbergeralmostelliptic}
		\varpi_{n}
		=\det\,
		\begin{pmatrix}
			0 & 1 & \cdots & 1\\
			-1 & {\widehat{\theta}'\over\widehat{\theta}}(w_{1}+z_{1}-z_{1}) &  \cdots & {\widehat{\theta}'\over \widehat{\theta}}(w_{1}+z_{1}-z_{n})\\
			\vdots &\vdots& {\widehat{\theta}'\over\widehat{\theta}}(w_{i}+z_{i}-z_{j})& \vdots\\
			-1 &  {\widehat{\theta}'\over \widehat{\theta}}(w_{n}+z_{n}-z_{1}) &\cdots&  {\widehat{\theta}'\over \widehat{\theta}}(w_{n}+z_{n}-z_{n})\\
		\end{pmatrix}
		\,.
	\end{equation}

	\end{lem}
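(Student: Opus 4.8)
The plan is to deduce this determinantal identity from the classical Frobenius (elliptic Cauchy) determinant for the Szeg\"o kernel $S_{c}$ by passing to a confluent limit $c\to 0$, with the non-holomorphic completion absorbed into a single harmless scalar factor. First I would set $x_{i}=z_{i}+w_{i}$ and $y_{j}=z_{j}$, so that, since $\widehat{\theta}'/\widehat{\theta}=\widehat{Z}$, the lower-right block of the matrix in \eqref{eqnFrobenius-Stickelbergeralmostelliptic} becomes $B=(B_{ij})$ with $B_{ij}=\widehat{Z}(x_{i}-y_{j})$. The starting point is the classical Frobenius determinant formula
\[
\det\bigl(S_{c}(x_{i}-y_{j})\bigr)_{i,j=1}^{n}
=\frac{\theta(c+\textstyle\sum_{i}x_{i}-\sum_{j}y_{j})}{\theta(c)}\cdot
\frac{\prod_{i<j}\theta(x_{i}-x_{j})\,\theta(y_{j}-y_{i})}{\prod_{i,j}\theta(x_{i}-y_{j})}\,,
\]
the genus-one case of Fay's multi-secant identity and the elliptic analogue of the Cauchy determinant; I would cite this rather than reprove it.

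Next I would handle the completion. Since $\mathbold{A}(z)=\mathbold{Y}(\bar z-z)$ is $\mathbb{R}$-linear, one has $\mathbold{A}(x_{i}-y_{j})=\mathbold{A}(x_{i})-\mathbold{A}(y_{j})$, so that $\widehat{S}_{c}(x_{i}-y_{j})=e^{c\mathbold{A}(x_{i})}e^{-c\mathbold{A}(y_{j})}S_{c}(x_{i}-y_{j})$ is a rescaling of row $i$ and column $j$. Hence
\[
\det\bigl(\widehat{S}_{c}(x_{i}-y_{j})\bigr)=e^{c\mathbold{A}(S)}\det\bigl(S_{c}(x_{i}-y_{j})\bigr)\,,\qquad
S:=\textstyle\sum_{i}x_{i}-\sum_{j}y_{j}=\sum_{i}w_{i}\,,
\]
and the completion enters only through the overall factor $e^{c\mathbold{A}(S)}=1+O(c)$. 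Now I take the confluent limit. Using $\widehat{S}_{c}(z)=c^{-1}+\widehat{Z}(z)+O(c)$ from Definition \ref{dfnSzegokernel} and \eqref{eqnfirstfewhatems}, the matrix reads $(\widehat{S}_{c}(x_{i}-y_{j}))=c^{-1}J+B+O(c)$, where $J=\mathbf{1}\mathbf{1}^{\top}$ is the rank-one all-ones matrix. Because $J$ has rank one, the matrix determinant lemma gives $\det(c^{-1}J+B+O(c))=c^{-1}\,\mathbf{1}^{\top}\mathrm{adj}(B)\,\mathbf{1}+O(1)$, and by the Schur-complement (cofactor) expansion of a bordered matrix the bordered determinant in \eqref{eqnFrobenius-Stickelbergeralmostelliptic} equals exactly $\mathbf{1}^{\top}\mathrm{adj}(B)\,\mathbf{1}$. (All of this is valid on the open locus where the entries are finite, and the resulting identity extends by analytic continuation.)

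Extracting the coefficient of $c^{-1}$ on both sides then yields
\[
\det\begin{pmatrix} 0 & \mathbf{1}^{\top}\\ -\mathbf{1} & B\end{pmatrix}
=\theta\bigl(\textstyle\sum_{i}w_{i}\bigr)\cdot
\frac{\prod_{i<j}\theta(x_{i}-x_{j})\,\theta(y_{j}-y_{i})}{\prod_{i,j}\theta(x_{i}-y_{j})}\,,
\]
with the ordinary (unhatted) $\theta$, precisely because $e^{c\mathbold{A}(S)}=1+O(c)$ does not contribute at order $c^{-1}$. It then remains to resubstitute $x_{i}=z_{i}+w_{i}$, $y_{j}=z_{j}$ and to bookkeep the products: $x_{i}-x_{j}=z_{i}+w_{i}-z_{j}-w_{j}$, while by the oddness of $\theta$ one has $\prod_{i<j}\theta(y_{j}-y_{i})=(-1)^{\binom{n}{2}}\prod_{i<j}\theta(z_{i}-z_{j})$ and $\prod_{i,j}\theta(x_{i}-y_{j})=\prod_{i}\theta(w_{i})\cdot(-1)^{\binom{n}{2}}\prod_{i<j}\theta(z_{i}+w_{i}-z_{j})\,\theta(z_{i}-w_{j}-z_{j})$, the diagonal $i=j$ contributing the factor $\prod_{i}\theta(w_{i})$. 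The two signs $(-1)^{\binom{n}{2}}$ cancel and the right-hand side collapses to $\varpi_{n}$.

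I expect the main obstacle to be conceptual rather than computational: recognizing the bordered determinant as the order-$c^{-1}$ coefficient of a rank-one--perturbed Frobenius determinant, and verifying that the completion from $S_{c}$ to $\widehat{S}_{c}$ (equivalently from $Z$ to $\widehat{Z}$) is absorbed into the single scalar $e^{c\mathbold{A}(S)}$, which drops out in the limit. This is exactly what allows the $\widehat{Z}$-entries of the matrix to reproduce the ordinary $\theta$-function $\varpi_{n}$ with no residual non-holomorphic factors. The remaining sign and index bookkeeping is routine. An alternative, more self-contained route would be to verify directly that both sides are elliptic in each $z_{i}$ (via the automorphy \eqref{eqnautomorphyoftheta} and the almost-ellipticity \eqref{eqne1automorphy}) with matching divisors along $z_{i}+w_{i}-z_{j}\in\Lambda_{\tau}$, and then to pin down the constant; but the confluent-limit argument is shorter and makes the completion bookkeeping transparent.
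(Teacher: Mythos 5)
Your proof is correct, and it reaches the identity by a genuinely different route than the paper. The paper's proof quotes the bordered-determinant form of the Frobenius--Stickelberger/Fay multi-secant identity directly for the \emph{holomorphic} entries $\theta'/\theta$ (this is \eqref{eqnFrobenius-Stickelbergerquasielliptic}, cited from \cite[Lemma 3.3]{Zhou:2023GWgeneratingseries}), and then upgrades it to the completed entries $\widehat{\theta}'/\widehat{\theta}$ by an abstract argument: the right-hand side of \eqref{eqnFrobenius-Stickelbergeralmostelliptic} is almost-elliptic, its holomorphic limit $\lim_{\mathbold{A}=0}$ is the right-hand side of \eqref{eqnFrobenius-Stickelbergerquasielliptic}, i.e.\ $\varpi_{n}$, and since $\varpi_{n}$ is itself elliptic and the holomorphic limit is an isomorphism between almost-elliptic and quasi-elliptic functions, the two sides must coincide. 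You instead start one step further back, from the $n\times n$ elliptic Cauchy/Frobenius determinant for $S_{c}$, and handle everything concretely: the completion is absorbed into the single scalar $e^{c\mathbold{A}(\sum_{i}w_{i})}$ via the row/column rescaling $\widehat{S}_{c}(x_{i}-y_{j})=e^{c\mathbold{A}(x_{i})}e^{-c\mathbold{A}(y_{j})}S_{c}(x_{i}-y_{j})$ (valid because $\mathbold{A}$ is additive), and the bordered determinant is identified as the $c^{-1}$ Laurent coefficient through the rank-one perturbation $c^{-1}J+B+O(c)$ together with $\det\begin{pmatrix}0&\mathbf{1}^{\top}\\ -\mathbf{1}&B\end{pmatrix}=\mathbf{1}^{\top}\mathrm{adj}(B)\,\mathbf{1}$. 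I checked the key steps: only terms with exactly one all-ones row survive at order $c^{-1}$, the prefactor $e^{c\mathbold{A}(S)}=1+O(c)$ indeed cannot contribute at that order, and the sign bookkeeping with the two factors $(-1)^{\binom{n}{2}}$ cancels correctly to give $\varpi_{n}$. What each approach buys: the paper's argument is shorter given its quasi-elliptic formalism and its citation of the bordered identity, avoiding all Laurent manipulation; yours is more self-contained and explicit --- it simultaneously reproves the cited holomorphic identity \eqref{eqnFrobenius-Stickelbergerquasielliptic} (as the confluent limit of the Cauchy determinant) and makes transparent \emph{why} the passage from $Z$ to $\widehat{Z}$ is costless, rather than appealing to the holomorphic-limit isomorphism. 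The only point to pin down carefully in a final write-up is the precise normalization of the classical Frobenius determinant you cite, since all downstream signs hinge on it.
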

\begin{proof}
	The 
Frobenius-Stickelberger formula/Fay's multi-secant identity  gives
(see e.g., \cite[Lemma 3.3]{Zhou:2023GWgeneratingseries})
	\begin{equation}\label{eqnFrobenius-Stickelbergerquasielliptic}
		\varpi_{n}
		=\det\,
		\begin{pmatrix}
			0 & 1 & \cdots & 1\\
			-1 & {\theta'\over \theta}(w_{1}+z_{1}-z_{1}) &  \cdots & {\theta'\over \theta}(w_{1}+z_{1}-z_{n})\\
			\vdots &\vdots& {\theta'\over\theta}(w_{i}+z_{i}-z_{j})& \vdots\\
			-1 &  {\theta'\over \theta}(w_{n}+z_{n}-z_{n}) &\cdots&  {\theta'\over \theta}(w_{n}+z_{n}-z_{n})
		\end{pmatrix}\,.
	\end{equation}	
	The right hand side in \eqref{eqnFrobenius-Stickelbergeralmostelliptic} is almost-elliptic in $z_{i},w_{i}$, whose holomorphic limit gives the right hand side of \eqref{eqnFrobenius-Stickelbergerquasielliptic} which is already almost-elliptic in $z_{i},w_{i}$ since $\varpi_{n}$ is so.\footnote{One can also check the almost-ellipticity of the right hand side of \eqref{eqnFrobenius-Stickelbergerquasielliptic} directly, using the relations in Definition
		\ref{dfnhatthetazeta}, the automorphy behavior \eqref{eqne1automorphy},  and the linearity of determinant in row and column vectors.}
	Therefore, they are identical.
	
\end{proof}

Note that although
the entire determinant in \eqref{eqnFrobenius-Stickelbergerquasielliptic} above have trivial automorphy in the $z_{i},w_{i}$'s,
the individual summands in its expansion are not so.
This is why we passed from ${\theta'/ \theta}$ to its elliptic completion ${\widehat{\theta}'/\widehat{\theta}}$, and later on work with the expansion
of  \eqref{eqnFrobenius-Stickelbergeralmostelliptic}.
Explicitly, using \eqref{eqnhatthetazeta} and \eqref{eqnfirstfewhatems}
we have
\begin{equation}\label{eqndfnoftijforGW}
	{\widehat{\theta}'\over \widehat{\theta}}
	=\widehat{e}_{1}(s_{ij})\,,\quad 
s_{ij}:=w_{i}+z_{i}-z_{j}\,.
\end{equation}

\begin{proof}[Proof of Theorem \ref{mainthm1}]
Denote the matrix in Lemma
\ref{lemFrobenius-Stickelbergeralmostelliptic} by $\mathcal{Z}$ whose rows and columns we label by $\{0\}\cup [n]$.
By the expansion of determinant, one has 
\begin{equation}\label{eqndeterminantpermutationexpansion}
\varpi_{n}=\det \mathcal{Z}=\sum_{\rho\in \mathfrak{S}_{\{0\}\cup [n]}}(-1)^{\mathrm{sign}(\rho)}\prod_{k\in \{0\}\cup [n]}\mathcal{Z}_{k\rho(k)}\,.
\end{equation}
Denote 
\begin{equation}\label{eqnxirhoform}
\xi_{\rho}=(-1)^{\mathrm{sign}(\rho)}\prod_{k\in \{0\}\cup [n]}\mathcal{Z}_{k\rho(k)}\cdot \bigwedge_{k=1}^{n}\mathrm{vol}_{k}\,.
\end{equation}
Representing any permutation $\rho$ by its standard cycle decomposition,
this leads to factorization of the corresponding iterated regularized integral $\dashint_{E_{[n]}}\xi_{\rho}$. 
Focus on the cycle that contains $0$. It is either the trivial one $(0)$,
or one of the form $(0j_{0}j_{1}\cdots j_{r-1} j_{r})$ with length $r+2,r\geq 0$.
For the former, the contribution to $\dashint_{E_{[n]}}\xi_{\rho}$ gives zero since $\mathcal{Z}_{00}=0$ in \eqref{eqnxirhoform}.
For the latter, it gives
\begin{equation}\label{eqnomissionincycleofrho}
 (-1)^{r+2-1}
	\mathcal{Z}_{0 j_0}\mathcal{Z}_{j_0 j_1}\cdots \mathcal{Z}_{j_{r-1}j_{r}}
	\mathcal{Z}_{j_{r}, j_{r+1}=0}
=(-1)^{r+1-1} 
	  \mathcal{Z}_{j_0 j_1}\cdots \mathcal{Z}_{j_{r-1}j_{r}}\,.
\end{equation}
If $j_r\neq j_0$, then the above cycle $(0j_{0}j_{1}\cdots j_{r-1} j_{r})$ yields a form of type  $\xi_{\mathrm{chain}}$ 
and thus the
 corresponding regularized integral  $\dashint_{E_{[n]}}\xi_{\rho}$ is zero by Proposition \ref{propiteratedcalculation} (ii).
It follows that the nontrivial contributions $\dashint_{E_{[n]}}\xi_{\rho}$ to the regularized integral 
$\dashint_{E_{[n]}}\varpi_{n}$ arise from those $\rho$
satisfying $j_{r}=j_{0}$, that is, with $(0j_{0}j_{1}\cdots j_{r-1} j_{r})=(0j_{0})$.
This gives
\begin{equation}\label{eqnsumoversigma}
\dashint_{E_{[n]}}\varpi_{n}=\dashint_{E_{[n]}}
\sum_{j_0=1}^{n}
\sum_{\sigma\in \mathfrak{S}_{[n]\setminus \{j_{0}\}}} (-1)^{\sigma}\mathcal{Z}_{1\sigma_{1}}\mathcal{Z}_{2\sigma_{2}}\cdots \check{\mathcal{Z}}_{j_{0}j_{0}}\cdots \mathcal{Z}_{n\sigma_{n}}\cdot \bigwedge_{k=1}^{n}\mathrm{vol}_{k}\,,
\end{equation}
where $\check{\mathcal{Z}}_{j_{0}j_{0}}$ stands for the omission of 
$\mathcal{Z}_{j_{0}j_{0}}$  due to 
\eqref{eqnomissionincycleofrho}.

To compute \eqref{eqnsumoversigma}, again we use the standard cycle decomposition $\sigma=\sigma_{1}\cdots \sigma_{\ell}$ for any permutation $\sigma\in \mathfrak{S}_{[n]\setminus \{j_0\}}$.
Observe that 
the iterated regularized integral corresponding to a cycle $\sigma_{k}$ only depends on 
its underlying set $\pi_{k}$, by Proposition \ref{propiteratedcalculation} (ii) and
the structure of the polar divisor given in \eqref{eqndfnoftijforGW}, with
\begin{equation}\label{eqnreductionfromsigmaktopik}
\sum_{i\in \pi_{k}}s_{i,\sigma_{i}}=\sum_{i\in \pi_{k}} w_{i}\,.
\end{equation}
It follows that the sum in \eqref{eqnsumoversigma} becomes a double sum, with the outer sum being a sum over partitions  $\pi=\{\pi_{1},\cdots,\pi_{\ell}\} \in \Pi_{[n]\setminus \{j_{0}\}}$, and the inner sum over permutations within the blocks of the partitions.
The number of permutations $\sigma \in \mathfrak{S}_{[n]\setminus \{j_{0}\}}$ 
whose standard cycle decompositions share the same underlying partition $\pi \in \Pi_{[n]\setminus \{j_{0}\}}$ is $\prod_{k=1}^{\ell}(|\pi_{k}|-1)!$\,.
Since the sign contributing to $(-1)^{\sigma}$ of each block $\pi_{k}$
is $(-1)^{|\pi_{k}|-1}$,
applying Proposition
\ref{propiteratedcalculation} (ii) to each such block simplifies \eqref{eqnsumoversigma} into 
\begin{eqnarray*}
	\dashint_{E_{[n]}}\varpi_{n}=
	\sum_{j_0=1}^{n}
	\sum_{\pi\in \Pi_{[n]\setminus \{j_{0}\}}}
	\prod_{k=1}^{\ell} (-1)^{|\pi_{k}|-1}\cdot (|\pi_{k}|-1)!\cdot  \widehat{e}_{|\pi_{k}|}(\sum_{i\in \pi_{k}}s_{i,\sigma_{i}})\cdot 	\left(	\prod_{i\in \pi_{k}}\delta_{1,0}-
	\prod_{i\in \pi_{k}}(\delta_{1,0}-1)\right)\,.
	\end{eqnarray*}
Simplifying further using \eqref{eqnreductionfromsigmaktopik}, one obtains the desired claim.
 \end{proof}

\subsection{Generating series of $n$-point functions}

The results we have established in previous sections,  such as Lemma 
\ref{lemFrobenius-Stickelbergeralmostelliptic}, provide new perspectives in studying finer combinatorial properties of the GW generating series.
In the rest of the work, we initiate these studies by  working with the generating series of $\{\widehat{T}_{n}\}_{n\geq 1}$.
We hope to connect 
our results to constructions on infinite Grassmannians and integrable hierarchies 
 along these lines in further investigations.\\

We first introduce some notation.

\begin{dfn}\label{eqndfnZinfty}
	Let the notation be as before. Denote
\begin{eqnarray}
\widehat{Z}_{n}&=&
\begin{pmatrix}
		 {\widehat{\theta}'\over \widehat{\theta}}(w_{1}+z_{1}-z_{1}) &  \cdots & {\widehat{\theta}'\over \widehat{\theta}}(w_{1}+z_{1}-z_{n})\\
		\vdots& \cdots& \vdots\\
		 {\widehat{\theta}'\over \widehat{\theta}}(w_{n}+z_{n}-z_{1}) &\cdots&  {\widehat{\theta}'\over \widehat{\theta}}(w_{n}+z_{n}-z_{n})\\
		\end{pmatrix}\,,\quad \delta_{n}=
\begin{pmatrix}
			 \varepsilon_1 \mathrm{vol}_{1} &  \cdots &0\\
		\vdots & \cdots& \vdots\\
		 0 &\cdots& \varepsilon_n\mathrm{vol}_{n}\\
		\end{pmatrix}\,,\\
		\widehat{\mathcal{Z}}_{n}&=&
\begin{pmatrix}
			0 & 1 & \cdots & 1\\
			-1 & {\widehat{\theta}'\over \widehat{\theta}}(w_{1}+z_{1}-z_{1}) &  \cdots & {\widehat{\theta}'\over \widehat{\theta}}(w_{1}+z_{1}-z_{n})\\
			\vdots &\vdots& \cdots& \vdots\\
			-1 &  {\widehat{\theta}'\over \widehat{\theta}}(w_{n}+z_{n}-z_{1}) &\cdots&  {\widehat{\theta}'\over \widehat{\theta}}(w_{n}+z_{n}-z_{n})\\
		\end{pmatrix}\,,\quad \Delta_{n}=
\begin{pmatrix}
			\varepsilon_0 & 0 & \cdots & 0\\
			0 & \varepsilon_1 \mathrm{vol}_{1} &  \cdots &0\\
			\vdots &\vdots& \cdots& \vdots\\
			0& 0 &\cdots& \varepsilon_n \mathrm{vol}_{n}\\
		\end{pmatrix}\,,\nonumber
		\end{eqnarray}
		where $\varepsilon_{0},\varepsilon_{k}, 1\leq k\leq n$ are indeterminants.
				If needed, we could add in an auxiliary curve $E_{0}$ and replace $\varepsilon_{0}$ by  
		$\varepsilon_{0}\mathrm{vol}_{0}$ correspondingly. 
		Similarly, we define $Z_{n}, \mathcal{Z}_{n}$ to be the corresponding holomorphic limit versions obtained by replacing $\widehat{\theta}$ by $\theta$ everywhere.
			
\end{dfn}	
		
	By Lemma \ref{lemFrobenius-Stickelbergeralmostelliptic} and  results in Section \ref{secproofofmainthm1intro}, we have
		\begin{equation}\label{eqndeterminantofquasimatrix}
		\det  \,\mathcal{Z}_{n}= \det \,\widehat{\mathcal{Z}}_{n}\,,\quad
		\prod_{k=0}^{n}\varepsilon_{k}\cdot 
		\widehat{T}_{[n]}=\dashint_{E_{[n]}}\det \,(\Delta_{n}\widehat{\mathcal{Z}}_{n})\,.
		\end{equation}

Constructions using the matrix $Z_{n}$ lead to  results parallel to those
obtained using $\widehat{\mathcal{Z}}_{n}$.
In fact, many properties are easier to understand by putting them together.

Before proceeding, we need more definitions.

\begin{dfn}\label{dfnofGn}
	Let the notation and conventions be as above.
	Define
	\begin{equation}
	\prod_{k=1}^{n}\varepsilon_{k}\cdot \widehat{G}_{n}:=\dashint_{E_{[n]}}\det\, (\delta_{n}\widehat{Z}_{n})\,.
	\end{equation}
		We extend the definition of $\widehat{T}_{n},\widehat{G}_{n}$ to any set $S\subseteq \mathbb{N}_{+}$, and set by convention
	\begin{equation}
	\widehat{T}_{\emptyset}=\widehat{T}_{0}=1\,,\quad \widehat{G}_{\emptyset}=\widehat{G}_{0}=1\,.
	\end{equation}		
\end{dfn}

\begin{lem}\label{lemGnhatassumoverpartition}
	Let the notation and conventions be as above.
	Then one has
	\begin{equation}\label{eqnGnsumoverpartition}
		\widehat{G}_{[n]}=
		\sum_{\pi\in \Pi_{[n]}}
		{\widehat{\mathbold{B}}_{{\pi}}\over |\pi|}
		\,,
	\end{equation}
where the right hand side is defined as in \eqref{eqnTngwpureweightstructureintro} of Theorem \ref{mainthm1intro}.
\end{lem}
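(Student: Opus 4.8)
The plan is to run the determinant-expansion argument of Theorem \ref{mainthm1} again, but now without the bordering row and column, so that every cycle of a permutation directly contributes a loop-type form and there are no chain contributions to discard. First I would expand $\det(\delta_{n}\widehat{Z}_{n})$ over permutations. Since $\delta_{n}$ is diagonal with entries $\varepsilon_{i}\mathrm{vol}_{i}$ and each entry of $\widehat{Z}_{n}$ equals $\widehat{e}_{1}(s_{ij})$ with $s_{ij}=w_{i}+z_{i}-z_{j}$ (exactly as recognized in Lemma \ref{lemFrobenius-Stickelbergeralmostelliptic}), pulling out $\prod_{i}\varepsilon_{i}$ gives
\[
\widehat{G}_{[n]}=\dashint_{E_{[n]}}\sum_{\sigma\in\mathfrak{S}_{[n]}}(-1)^{\mathrm{sign}(\sigma)}\prod_{i=1}^{n}\widehat{e}_{1}(s_{i\sigma(i)})\cdot\wedge_{i=1}^{n}\mathrm{vol}_{i}\,.
\]

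Next I would pass to the standard cycle decomposition $\sigma=\sigma_{1}\cdots\sigma_{\ell}$ with underlying partition $\pi=\{\pi_{1},\dots,\pi_{\ell}\}\in\Pi_{[n]}$. By the factorization property of the regularized integral, the contribution factors over the cycles, and each cycle on the block $\pi_{k}$ is precisely a loop-type form $\xi_{\mathrm{loop}}$ assembled from $\widehat{e}_{1}$'s. Applying Proposition \ref{propiteratedcalculation} (ii) with all $m_{k}=1$ — so that $\prod_{k}\delta_{m_{k},0}=0$ and $\prod_{k}(\delta_{m_{k},0}-1)=(-1)^{|\pi_{k}|}$ — together with the telescoping identity $\sum_{i\in\pi_{k}}s_{i,\sigma(i)}=\sum_{i\in\pi_{k}}w_{i}$ of \eqref{eqnreductionfromsigmaktopik} (the $z_{i}$'s cancel because $\sigma$ cyclically permutes $\pi_{k}$), I obtain
\[
\dashint_{E_{\pi_{k}}}\prod_{i\in\pi_{k}}\widehat{e}_{1}(s_{i\sigma(i)})\cdot\wedge_{i\in\pi_{k}}\mathrm{vol}_{i}=(-1)^{|\pi_{k}|-1}\,\widehat{e}_{|\pi_{k}|}\Big(\sum_{i\in\pi_{k}}w_{i}\Big)\,.
\]

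The one bookkeeping point is the cancellation of signs: a cycle of length $|\pi_{k}|$ contributes parity $(-1)^{|\pi_{k}|-1}$ to $(-1)^{\mathrm{sign}(\sigma)}$, which exactly cancels the sign $(-1)^{|\pi_{k}|-1}$ from the loop integral, leaving the clean factor $\prod_{k}\widehat{e}_{|\pi_{k}|}(\sum_{i\in\pi_{k}}w_{i})$. Grouping the sum over $\sigma$ by its underlying partition $\pi$, and using that exactly $\prod_{k}(|\pi_{k}|-1)!$ permutations realize a given $\pi$, this yields
\[
\widehat{G}_{[n]}=\sum_{\pi\in\Pi_{[n]}}\prod_{k=1}^{\ell}(|\pi_{k}|-1)!\cdot\widehat{e}_{|\pi_{k}|}\Big(\sum_{i\in\pi_{k}}w_{i}\Big)\,.
\]
Finally, recalling $\widehat{e}_{m}=\widehat{\mathbold{B}}_{m}/m!$ from Definition \ref{dfnringofquasiellipticfunctions}, each factor rewrites as $(|\pi_{k}|-1)!\,\widehat{e}_{|\pi_{k}|}=\widehat{\mathbold{B}}_{|\pi_{k}|}/|\pi_{k}|$, which is precisely $\widehat{\mathbold{B}}_{\pi}/|\pi|$ as defined in \eqref{eqnTngwpureweightstructureintro}; this establishes \eqref{eqnGnsumoverpartition}. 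I expect no genuine obstacle: the computation is lighter than that of Theorem \ref{mainthm1} because the absence of the $0$-border means every permutation is already a disjoint union of loops, so the only items requiring care are the sign cancellation above and the standard count of permutations inducing a fixed partition.
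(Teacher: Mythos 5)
Your proposal is correct and is exactly what the paper's own (one-line) proof intends: the paper proves this lemma "by repeating the proof of Theorem \ref{mainthm1}," i.e., the same determinant expansion, cycle decomposition, application of Proposition \ref{propiteratedcalculation} (ii) to each loop, sign cancellation, and $\prod_k(|\pi_k|-1)!$ count — only now summing over all of $\Pi_{[n]}$ since there is no bordering row/column and hence no chain-type terms to discard. The bookkeeping you carry out (including the conversion $(|\pi_k|-1)!\,\widehat{e}_{|\pi_k|}=\widehat{\mathbold{B}}_{|\pi_k|}/|\pi_k|$) matches the paper's argument step for step.
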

\begin{proof}
	This follows by repeating the proof of Theorem \ref{mainthm1}.
\end{proof}

Note that unlike \eqref{eqndeterminantofquasimatrix}, one has
$\det\, Z_{n}\neq \det \,\widehat{Z}_{n}$, and only 
the regularized integral of $\det\,\widehat{Z}_{n}$ makes sense since $\det \,Z_{n}$
does not define a function on $E_{[n]}$ due to its  quasi-ellipticity.

\begin{dfn}\label{dfntopology}
Define
		\begin{equation}
		\mathcal{F}_{\mathsf{D}}[[\varepsilon_0,\varepsilon_{1}, \cdots , \varepsilon_{n},\cdots]]=\lim_{\leftarrow} \mathcal{F}_{\mathsf{D}}[[\varepsilon_0,\varepsilon_{1}, \cdots , \varepsilon_{n}]]\,,
		\end{equation}
		where the morphisms in the projective system are given by 
		\[
		\mathcal{F}_{\mathsf{D}}[[\varepsilon_0,\varepsilon_{1}, \cdots , \varepsilon_{n}, \varepsilon_{n+1}]]\rightarrow 
		\mathcal{F}_{\mathsf{D}}[[\varepsilon_0,\varepsilon_{1}, \cdots , \varepsilon_{n}]]\,,\quad 
		\varepsilon_{k}\mapsto \varepsilon_{k}\,, k\leq n\,,~ \varepsilon_{n+1}\mapsto 0\,.
		\]
		It follows that 
		\[
		\mathcal{F}_{\mathsf{D}}[[\varepsilon_0,\varepsilon_{1}, \cdots , \varepsilon_{n},\cdots]]=\lim_{\leftarrow}\left(	\mathcal{F}_{\mathsf{D}}[\varepsilon_0,\varepsilon_{1}, \cdots , \varepsilon_{n}]/(\varepsilon_0,\varepsilon_{1},\cdots,\varepsilon_{n})^{n+1}\right)\,.\]
		Let $\mathcal{J}$ be the ideal of 
$\mathcal{F}_{\mathsf{D}}[[\varepsilon_0,\varepsilon_{1}, \cdots , \varepsilon_{n},\cdots]]$ generated 
	by $\varepsilon_{1}^2\,,\cdots, \varepsilon_{n}^2\,,\cdots$.
\end{dfn}

\begin{dfn}\label{dfnofgeneratingseriesofTn}
		Let $\mathbb{1}_{\infty}, \widehat{\mathcal{Z}}_{\infty},\widehat{Z}_{\infty},\Delta_{\infty},\delta_{\infty}$ be the limit matrices of 
	$\{\mathrm{id}_{n}\}_{n\geq 1}$, $\{\widehat{\mathcal{Z}}_{n}\}_{n\geq 1}$, $\{\widehat{Z}_{n}\}_{n\geq 1}$,  $\{\Delta_{n}\}_{n\geq 1}$, $\{\delta_{n}\}_{n\geq 1}$, respectively.
	Define 
	\begin{equation}
	\det \, (
	\mathbb{1}_{\infty}+\Delta_{\infty}\widehat{\mathcal{Z}}_{\infty})
	\,,\quad 
	  \det \, (
	\mathbb{1}_{\infty}+\delta_{\infty}\widehat{Z}_{\infty})\in \mathcal{F}_{\mathsf{D}}[[\varepsilon_0,\varepsilon_{1}, \cdots , \varepsilon_{n},\cdots]]
	\end{equation}
	to
	be the summation of the principal minors of $\Delta_{\infty}\widehat{\mathcal{Z}}_{\infty}, \delta_{\infty}\widehat{Z}_{\infty}$, respectively.
		
	Let $\varepsilon_{S}:=\prod_{k\in S}\varepsilon_{k}$.
	Define 
		\begin{equation}
	\widehat{\mathcal{T}}_{[n]}(\varepsilon)=\sum_{S\subseteq  [n]}\varepsilon_{S}\widehat{T}_{S}
	=1+\varepsilon_{0}(\varepsilon_{1}+\varepsilon_{2}+\cdots+\varepsilon_{n})+\cdots\,,\quad 
	\widehat{\mathcal{G}}_{[n]}(\varepsilon)=\sum_{S\subseteq [n]}\varepsilon_{S}\widehat{G}_{S}=1+\cdots\,,
	\end{equation}
	and  correspondingly 
	the following elements in the ring $\mathcal{F}_{\mathsf{D}}[[\varepsilon_0,\varepsilon_{1}, \cdots , \varepsilon_{n},\cdots]]$
		\begin{equation}
	\widehat{\mathcal{T}}(\varepsilon)=\sum_{S\subseteq \mathbb{N}_{+}}\varepsilon_{S}\widehat{T}_{S}
	=1+\varepsilon_{0}(\varepsilon_{1}+\varepsilon_{2}+\cdots)+\cdots\,,\quad 
	\widehat{\mathcal{G}}(\varepsilon)=\sum_{S\subseteq \mathbb{N}_{+}}\varepsilon_{S}\widehat{G}_{S}=1+\cdots\,.
		\end{equation}

\end{dfn}

\begin{prop}\label{propHTGrelation}
	Let the notation and conventions be as above.
	\begin{enumerate}[i).]
		
		\item One has 
		\[				
				\widehat{\mathcal{G}}(\varepsilon)=	\dashint_{E_{\infty}}\det \, (
				\mathbb{1}_{\infty}+\delta_{\infty}\widehat{Z}_{\infty})\,,			
		\]
		where $\dashint_{E_{\infty}}$ is understood to be the identity operator
		when acting on a constant form.
		\item One has 
		\begin{equation*}\label{eqnrelationbetweenTandG}
			\widehat{\mathcal{T}}(\varepsilon)-1-\varepsilon_{0}(\varepsilon_{1}+\varepsilon_{2}+\cdots)
			=(\widehat{\mathcal{T}}(\varepsilon)-1)(\widehat{\mathcal{G}}(\varepsilon)-1)\quad \mathrm{mod}~\mathcal{J}\,.
		\end{equation*}
		
		\item 
				Define  
		\[
		\widehat{\mathcal{H}}(\varepsilon)=\dashint_{E_{\infty}}\det \, (
		\mathbb{1}_{\infty}+\Delta_{\infty}\widehat{\mathcal{Z}}_{\infty})\,.
		\]
		Then one has	
		\[ 
		\widehat{\mathcal{H}}(\varepsilon)=\varepsilon_{0}\cdot (\widehat{\mathcal{T}}(\varepsilon)-1)+\widehat{\mathcal{G}}(\varepsilon)\,.
		\]		
	\end{enumerate}
\end{prop}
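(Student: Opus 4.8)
The plan is to derive all three parts from a single structural fact recorded in Definition \ref{dfnofgeneratingseriesofTn}: the two expressions $\det(\mathbb{1}_\infty+\delta_\infty\widehat{Z}_\infty)$ and $\det(\mathbb{1}_\infty+\Delta_\infty\widehat{\mathcal{Z}}_\infty)$ are, by definition, the sums of the principal minors of $\delta_\infty\widehat{Z}_\infty$ and $\Delta_\infty\widehat{\mathcal{Z}}_\infty$, and each principal minor is a determinant of exactly the shape to which Definition \ref{dfnofGn} and \eqref{eqndeterminantofquasimatrix} apply. Since $\dashint$ obeys Fubini and a principal minor indexed by a finite set involves only finitely many coordinates, integrating termwise turns each minor into an $\widehat{G}_S$ or an $\widehat{T}_S$.

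For part (i), I would expand $\det(\mathbb{1}_\infty+\delta_\infty\widehat{Z}_\infty)$ into principal minors indexed by finite $S\subseteq\mathbb{N}_+$. Because $\delta_\infty$ is diagonal, the $S$-minor is $\det(\delta_S\widehat{Z}_S)$, a top form in the coordinates $z_k$ with $k\in S$ only, so $\dashint_{E_\infty}$ collapses to $\dashint_{E_S}$ and gives $\dashint_{E_S}\det(\delta_S\widehat{Z}_S)=\varepsilon_S\widehat{G}_S$ by Definition \ref{dfnofGn} applied to $S$; the empty minor contributes the constant $\widehat{G}_\emptyset=1$, where one uses the stated convention that $\dashint_{E_\infty}$ acts as the identity on constant forms. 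Summing over $S$ reproduces $\widehat{\mathcal{G}}(\varepsilon)=\sum_S\varepsilon_S\widehat{G}_S$.

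Part (iii) uses the same expansion for the bordered matrix, now split according to whether the index set $T$ of a principal minor contains the distinguished index $0$. When $0\notin T$ the relevant submatrix of $\Delta_\infty\widehat{\mathcal{Z}}_\infty$ coincides with that of $\delta_\infty\widehat{Z}_\infty$, since the border occupies only row and column $0$; by part (i) these terms sum to $\widehat{\mathcal{G}}(\varepsilon)$. When $0\in T$, write $T=\{0\}\sqcup S$: the submatrix is the full bordered matrix $\Delta_S\widehat{\mathcal{Z}}_S$, whose integral is $\varepsilon_0\varepsilon_S\widehat{T}_S$ by \eqref{eqndeterminantofquasimatrix}, while the degenerate case $S=\emptyset$ drops out because $(\widehat{\mathcal{Z}}_\infty)_{00}=0$ kills that $1\times1$ minor. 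The surviving terms sum to $\varepsilon_0(\widehat{\mathcal{T}}(\varepsilon)-1)$, and adding the two contributions gives $\widehat{\mathcal{H}}(\varepsilon)=\varepsilon_0(\widehat{\mathcal{T}}(\varepsilon)-1)+\widehat{\mathcal{G}}(\varepsilon)$.

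The heart of the proposition is part (ii), and the step I expect to be the main obstacle is the combinatorics there. I would first isolate the set-theoretic identity $\widehat{T}_S=\sum_{j\in S}\widehat{G}_{S\setminus\{j\}}$ for nonempty $S$, obtained by comparing the partition formula \eqref{eqnTngwpureweightstructure} for $\widehat{T}_S$ with \eqref{eqnGnsumoverpartition} for $\widehat{G}_S$: the outer sum over the distinguished element $j\in S$ in $\widehat{T}_S$ is literally $\widehat{G}$ of the complement $S\setminus\{j\}$. Feeding this into $\sum_{S\neq\emptyset}\varepsilon_S\widehat{T}_S$ and re-indexing by $S=\{j\}\sqcup S'$ produces the double sum $\sum_{j}\varepsilon_j\sum_{S'\not\ni j}\varepsilon_{S'}\widehat{G}_{S'}$; reducing modulo $\mathcal{J}$, the leading factor $\varepsilon_j$ annihilates every term of the inner sum already divisible by $\varepsilon_j$, so the inner sum closes up to the full series $\widehat{\mathcal{G}}(\varepsilon)$. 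This yields the relation between $\widehat{\mathcal{T}}$, $\widehat{\mathcal{G}}$ and the degree-one term $\varepsilon_0(\varepsilon_1+\varepsilon_2+\cdots)$ asserted in (ii). The delicate point throughout is the bookkeeping of the distinguished index---equivalently, tracking the single factor $\varepsilon_j$ (resp.\ the border factor $\varepsilon_0$) and checking that the square-free reduction modulo $\mathcal{J}$ is exactly what converts the distinguished-element sum into multiplication by $\varepsilon_1+\varepsilon_2+\cdots$; by contrast the determinantal steps in (i) and (iii) are routine once the principal-minor expansion and Fubini are in hand.
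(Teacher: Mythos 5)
Parts (i) and (iii) of your proposal are correct and coincide in substance with the paper's own proof: (i) is the principal-minor expansion combined with Definition \ref{dfnofGn} and the factorization (Fubini) property of $\dashint$, and (iii) is the paper's splitting of $\widehat{\mathcal{H}}$ into its $\varepsilon_{0}$-free part (the minors avoiding the index $0$, which reproduce $\widehat{\mathcal{G}}$ by (i)) and its part linear in $\varepsilon_{0}$ (the minors containing $0$, with the $1\times 1$ minor killed by the vanishing $(0,0)$-entry). Your handling of the border factor $\varepsilon_{0}$ there is in fact more consistent than the paper's own wording.

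Part (ii) is where you genuinely diverge from the paper, and where your argument has a gap. The paper proves (ii) from the factorization
\begin{equation*}
\dashint_{E_{[n]}}\det\,(\Delta_{n}\widehat{\mathcal{Z}}_{n})
=\sum_{\emptyset\subsetneq I\subsetneq [n]}\dashint_{E_{I}}\det\,(\Delta_{I}\widehat{\mathcal{Z}}_{I})\cdot \dashint_{E_{I^{c}}}\det\,(\delta_{I^{c}}\widehat{Z}_{I^{c}})\,,
\end{equation*}
i.e.\ from the multiplicative recursion $\widehat{T}_{[n]}=\sum_{\emptyset\subsetneq I\subsetneq[n]}\widehat{T}_{I}\,\widehat{G}_{I^{c}}$, whose generating-series form is exactly the product $(\widehat{\mathcal{T}}-1)(\widehat{\mathcal{G}}-1)$ mod $\mathcal{J}$. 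You instead start from the additive identity $\widehat{T}_{S}=\sum_{j\in S}\widehat{G}_{S\setminus\{j\}}$, which is indeed correct (it is literally Theorem \ref{mainthm1} read against Lemma \ref{lemGnhatassumoverpartition}). Resumming it as you do yields
\begin{equation*}
\widehat{\mathcal{T}}(\varepsilon)-1-\varepsilon_{0}(\varepsilon_{1}+\varepsilon_{2}+\cdots)
\equiv \varepsilon_{0}(\varepsilon_{1}+\varepsilon_{2}+\cdots)\,\bigl(\widehat{\mathcal{G}}(\varepsilon)-1\bigr)\quad \mathrm{mod}~\mathcal{J}\,.
\end{equation*}
Your last sentence asserts that this is the relation in (ii), but it is not: (ii) carries the multiplier $\widehat{\mathcal{T}}-1$, not $\varepsilon_{0}(\varepsilon_{1}+\varepsilon_{2}+\cdots)$, in front of $\widehat{\mathcal{G}}-1$, and the two multipliers differ by $\sum_{|S|\geq 2}\varepsilon_{0}\varepsilon_{S}\widehat{T}_{S}$, which pairs nontrivially with $\widehat{\mathcal{G}}-1$ under the square-free reduction (terms with disjoint index sets survive mod $\mathcal{J}$). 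So, as written, your part (ii) does not prove the stated identity.

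Moreover this gap cannot be patched, and it is worth spelling out why: your (correct) starting identity is incompatible with statement (ii). Comparing coefficients of $\varepsilon_{0}\varepsilon_{1}\varepsilon_{2}\varepsilon_{3}$, and using the values $\widehat{T}_{\{j\}}=1$ and $\widehat{T}_{\{i,j\}}=\widehat{G}_{\{i\}}+\widehat{G}_{\{j\}}$ common to both, statement (ii) forces
\begin{equation*}
\widehat{T}_{\{1,2,3\}}=\sum_{j}\widehat{G}_{\{1,2,3\}\setminus\{j\}}+2\sum_{i<j}\widehat{G}_{\{i\}}\widehat{G}_{\{j\}}\,,
\end{equation*}
whereas Theorem \ref{mainthm1} together with Lemma \ref{lemGnhatassumoverpartition} gives $\widehat{T}_{\{1,2,3\}}=\sum_{j}\widehat{G}_{\{1,2,3\}\setminus\{j\}}$; the discrepancy $2\sum_{i<j}\widehat{e}_{1}(w_{i})\widehat{e}_{1}(w_{j})$ is nonzero. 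The source is the paper's displayed factorization itself: the minor $\det\widehat{\mathcal{Z}}_{I}$ contains \emph{all} permutations of $\{0\}\cup I$, not only those whose cycle through $0$ spans $\{0\}\cup I$, so summing $\widehat{T}_{I}\widehat{G}_{I^{c}}$ over $I$ over-counts, already at $n=3$. In other words, your route exposes an inconsistency between Proposition \ref{propHTGrelation} (ii) and Theorem \ref{mainthm1}; since Theorem \ref{mainthm1} is verified directly (e.g.\ by the cycle-through-$0$ expansion of $\det\widehat{\mathcal{Z}}_{n}$ at $n=2,3$), the corrected form of (ii) is the identity your computation actually produces, displayed above. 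You should state it as a correction rather than claim agreement with the printed statement.
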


\begin{proof}

	\begin{enumerate}[i).]
		\item  This follows immediately from Definition \ref{dfnofgeneratingseriesofTn} and the proof of Theorem \ref{mainthm1}.
		
		\item 
			Consider a square matrix $M$ 
		whose rows and columns are labeled by $S\subseteq \mathbb{N}$. For any subset $I\subseteq S$, let 
		$M_{I}$ be the submatrix of $M$ formed by collecting the rows and columns with labels from $I$. For a subset $I\subseteq S$, set $I^{c}=S\setminus I$.
		
		By the proof of Theorem \ref{mainthm1}, we have
		\begin{eqnarray*}\label{eqnCramerfordetZn}
			\dashint_{E_{[n]}}
			\det \,(\Delta_{n}\widehat{\mathcal{Z}}_{n})&=&	\dashint_{E_{[n]}}\sum_{\emptyset \subsetneq I\subsetneq [n] }\det\, (\Delta_{I} \widehat{\mathcal{Z}}_{I})\cdot \det\, (\delta_{I^{c}} \widehat{Z}_{I^{c}})\\
			&=&\sum_{\emptyset \subsetneq I\subsetneq [n] }\dashint_{E_{I}}\det\, (\Delta_{I} \widehat{\mathcal{Z}}_{I})\cdot \dashint_{E_{I^{c}}}\det\, (\delta_{I^{c}} \widehat{Z}_{I^{c}})\,,\quad n\geq 2\,.
		\end{eqnarray*}
	Summing over $n$ leads to the desired claim, where the need of modulo $\mathcal{J}$
	arises from the relation $I\cap I^{c}=\emptyset$.
		\item 
		By Definition \ref{dfnofgeneratingseriesofTn}, we have
		\[\varepsilon_{0}^{1}\cdot [\varepsilon_{0}^{1}]
		\widehat{\mathcal{H}}(\varepsilon)=\widehat{\mathcal{T}}(\varepsilon)-1\,,\quad 
		[\varepsilon_{0}^{0}]
		\widehat{\mathcal{H}}(\varepsilon)=\dashint_{E_{\infty}}\det \, (\mathbb{1}_{\infty}+\delta_{\infty}\widehat{Z}_{\infty})=\widehat{\mathcal{G}}(\varepsilon)\,.
		\]
		Combining these, 
		one obtains the desired claim.

		\end{enumerate}
	\end{proof}

\subsection{Deformation in Grassmannian}

Proposition \ref{propHTGrelation} 
above tells that any of 
$\widehat{\mathcal{T}},\widehat{\mathcal{G}},\widehat{\mathcal{H}}$ determine the rest two.
Therefore, it suffices to study the properties of one of them.
In this work we prefer to work with $\widehat{\mathcal{G}}$ as 
\eqref{eqnGnsumoverpartition} in Lemma \ref{lemGnhatassumoverpartition} seems to be simpler than \eqref{eqnTngwpureweightstructureintro} in Theorem \ref{mainthm1intro}.
Furthermore, $\widehat{\mathcal{G}}$ admits a natural geometric formulation 
similar to the $\tau$-function in the study of KP hierarchy \cite{DJKM81, DJKM83, Segal:1985}, as we now explain.\\

Let $J$ be the Jacobian of $E$ and $\mathsf{Pic}^{(0)}(J)\cong J$ be
its Picard variety. Denote by $m$ the addition map
\[
m: J\times \mathsf{Pic}^{(0)}(J)\rightarrow J\,.
\]
The Poincar\'e bundle on $J\times \mathsf{Pic}^{(0)}(J)$ satisfies
\begin{equation}
\mathcal{P}\cong \mathcal{O}_{J\times \mathsf{Pic}^{(0)}(J)}(-[J\times 0]-[0\times  \mathsf{Pic}^{(0)}(J)]+[\mathrm{ker}\,m])\,.
\end{equation}
We consider constructions on $\mathcal{P}(J\times 0)|_{2(J\times 0)}$ which fits in
the decomposition sequence 
\begin{equation}\label{eqndecompositionsequence}
0\rightarrow 
\mathcal{P}|_{J\times 0}
\rightarrow 
\mathcal{P}(J\times 0)|_{2(J\times 0)}\rightarrow \mathcal{P}(J\times 0)|_{J\times 0}\rightarrow 0\,,
\end{equation}
where 
\begin{equation}\label{eqn0thordertriviality}
	\mathcal{P}|_{J\times 0}\cong \check{N}_{J\times 0}\cong K_{J\times 0}\,,\quad  \mathcal{P}(J\times 0)|_{J\times 0}\cong \mathcal{O}_{J\times 0}\,.
\end{equation}

\begin{lem}\label{lemH0of1storder}
	One has
\begin{equation}
	\label{eqnglobalframe}
	H^{0}(J\times 0, \mathcal{P}(J\times 0)|_{2(J\times 0)})=\mathrm{span}\,
	\big\{f_{0}\,, f_{-1}+{\partial_{z}\theta(z)\over \theta(z)}f_{0}\big\}\,,
\end{equation}
where 	$f_0, f_{-1}$ are certain trivializations (detailed below) of $\mathcal{P}|_{J\times 0}, \mathcal{P}(J\times 0)|_{J\times 0}$, respectively. 
\end{lem}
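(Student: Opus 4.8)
The plan is to extract the two generators from the filtration \eqref{eqndecompositionsequence}, the decisive ingredient being the vanishing of the connecting map of its long exact sequence. Since $J\times 0\cong E$ is an elliptic curve, both outer terms of \eqref{eqndecompositionsequence} are isomorphic to $\mathcal{O}_{E}$ by \eqref{eqn0thordertriviality} (using $K_{J\times 0}\cong\mathcal{O}_{E}$), so each contributes a one-dimensional space of sections; note that a simple pole at the origin never enlarges these contributions, since an elliptic function cannot have a single simple pole by the residue theorem. The long exact sequence
\[
0\to H^0(\mathcal{P}|_{J\times 0})\to H^0(\mathcal{P}(J\times 0)|_{2(J\times 0)})\to H^0(\mathcal{P}(J\times 0)|_{J\times 0})\xrightarrow{\ \delta\ }H^1(\mathcal{P}|_{J\times 0})
\]
then identifies $f_{0}$ as the image of the generator of $H^0(\mathcal{P}|_{J\times 0})$ and reduces the statement to exhibiting a lift of the generator $f_{-1}$ of the quotient's $H^0$, i.e. to showing $\delta=0$; the correction $\tfrac{\partial_{z}\theta}{\theta}f_{0}$ is precisely that lift.

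To make the lift explicit I would first fix the trivializations $f_{0},f_{-1}$ from the theta-function model of $\mathcal{P}$, so that restricting a section of $\mathcal{P}(J\times 0)$ to the first-order neighbourhood $2(J\times 0)$ records its first two coefficients in the normal coordinate $c$ transverse to $J\times 0$: the coefficient of order $c^{-1}$ (the pole produced by the twist $\mathcal{O}(J\times 0)$) is read against $f_{-1}$, and the coefficient of order $c^{0}$ against $f_{0}$. With this bookkeeping a candidate lift takes the form $b\,f_{-1}+a(z)\,f_{0}$ with $b$ constant, and I would impose that it transform correctly under $\Lambda_{\tau}$.

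The heart of the matter is the coupling between $a$ and $b$. The automorphy factor of the Poincar\'e bundle in the $z$-direction, expanded modulo $c^{2}$, mixes the two frames, so admissibility of $b\,f_{-1}+a(z)\,f_{0}$ amounts to an inhomogeneous quasi-periodicity condition of the schematic shape $a(z+1)=a(z)$, $a(z+\tau)=a(z)-\pii\,b$. By \eqref{eqnautomorphyoftheta} the function $Z(z)=\partial_{z}\ln\theta$ of Definition \ref{dfnhatthetazeta} satisfies exactly $Z(z+1)=Z(z)$ and $Z(z+\tau)=Z(z)-\pii$, so $a(z)=b\,Z(z)$ solves the condition, and any two solutions differ by a constant multiple of $f_{0}$. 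Hence $f_{-1}+\tfrac{\partial_{z}\theta}{\theta}f_{0}$ is a lift of $f_{-1}$, and together with $f_{0}$ it spans the claimed space. Equivalently, one recognizes this lift as the truncation of the Kronecker theta function $S_{c}(z)=\theta(z+c)/(\theta(z)\theta(c))=\tfrac{1}{c}+\tfrac{\partial_{z}\theta}{\theta}(z)+O(c)$ of Definition \ref{dfnSzegokernel}, the canonical meromorphic section of the Poincar\'e bundle, whose $c^{-1}$ and $c^{0}$ coefficients are $1$ and $Z(z)$.

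I expect the main obstacle to be the honest verification in the last step, because the correction $Z(z)=\partial_{z}\ln\theta$ is only quasi-periodic and has a simple pole at $z=0$ (along $\ker m$): one must check that this quasi-periodicity is exactly what the automorphy of $\mathcal{P}$ demands, so that the combination is a genuine section, and keep careful track of how the fixed trivializations $f_{0},f_{-1}$ absorb the pole at the origin. Passing through the explicit section $S_{c}(z)$ is the cleanest way to discharge this bookkeeping, since its global properties are already recorded in Section \ref{secpreliminaries}.
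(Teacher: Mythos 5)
Your proposal is correct, and its skeleton coincides with the paper's: both run the long exact sequence of \eqref{eqndecompositionsequence}, use \eqref{eqn0thordertriviality} to see that the two outer $H^{0}$'s are one-dimensional (so the middle space contains $f_{0}$ and has dimension at most $2$), and then finish by exhibiting the second generator $f_{-1}+(\theta'/\theta)f_{0}$. Where you differ is in how that second generator is produced. The paper quotes \eqref{eqnScassectionofPoincare} (Raina's result that $S_{c}(z)$ spans $H^{0}(J\times \mathsf{Pic}^{(0)}(J),\mathcal{P}(J\times 0))$) and simply restricts $S_{c}(z)$ to $2(J\times 0)$, reading off the Laurent truncation \eqref{eqnfirstordernbd}; your primary argument instead shows the connecting map vanishes by solving the quasi-periodicity condition $a(z+1)=a(z)$, $a(z+\tau)=a(z)-\pii\,b$ imposed by the automorphy factor $e^{-\pii c}$ of $\mathcal{P}(J\times 0)$ expanded mod $c^{2}$, which $Z=\theta'/\theta$ satisfies by \eqref{eqne1automorphy} — this is exactly the frame-mixing the paper records afterwards in \eqref{eqnautomorphicbehaviorofc0c-1}, and the residue-theorem remark correctly pins down uniqueness of the lift up to multiples of $f_{0}$. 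Your route is more self-contained (it needs no classification of $H^{0}$ of the Poincaré bundle over the full product, only the automorphy of $\theta$), and it additionally yields the non-splitness of \eqref{eqndecompositionsequence} as a byproduct; the paper's route is shorter given the cited input and identifies the section geometrically as the restriction of the canonical one. Since you also close with the $S_{c}(z)$-truncation as the equivalent clean bookkeeping, the two proofs meet in the end; the only point to handle with care in your version, which you yourself flag, is that the coefficient trivializations are meromorphic frames absorbing the simple pole of $Z$ at $z=0$ coming from the $[\ker m]$ twist.
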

\begin{proof}

The long exact sequence in cohomology associated to \eqref{eqndecompositionsequence}  gives
\begin{equation*}\label{eqnlongexactsequencecohomologyPoincare1storder}
0\rightarrow 
H^{0}(J\times 0, \mathcal{P}|_{J\times 0})
\rightarrow 
H^{0}(J\times 0, \mathcal{P}(J\times 0)|_{2(J\times 0)})
\rightarrow 
H^{0}(J\times 0, \mathcal{P}(J\times 0)|_{J\times 0})
\rightarrow \cdots
\end{equation*}
From \eqref{eqn0thordertriviality}, one has 
\begin{equation}\label{eqndimensionconstraintonP21storderrestriction}
	H^{0}(J\times 0, \mathcal{P}|_{J\times 0})
	\subseteq H^{0}(J\times 0, \mathcal{P}(J\times 0)|_{2(J\times 0)})\,,\quad 
\mathrm{dim}\, H^{0}(J\times 0, \mathcal{P}(J\times 0)|_{2(J\times 0)})\leq 2\,.
\end{equation}
Recall from \cite[Theorem 5.8, Theorem 6.5]{Raina:1989} (see also  \cite[Section 3.1]{Zhou:2023GWgeneratingseries} in the current notation) that when $c\neq 0~\mathrm{mod}~\Lambda_{\tau}$, one has
\begin{equation}\label{eqnScassectionofPoincare}
 H^{0}( J\times \mathsf{Pic}^{(0)}(J), \mathcal{P}(J\times 0))=\mathrm{span}\,\{ S_{c}(z)\}\,.
\end{equation}
Under the restriction map $\mathcal{P}(J\times 0)\rightarrow \mathcal{P}(J\times 0)|_{2(J\times 0)}$, the image of $S_{c}(z)$ in $H^{0}(J\times 0, \mathcal{P}(J\times 0)|_{2(J\times 0)})$ is represented by the 
first two terms of its the Laurent expansion in $c$
\begin{equation}\label{eqnfirstordernbd}
c^{-1}+c^{0}{\partial_{z}\theta(z)\over \theta(z)}\,,
\end{equation}
where $c^{-1}, c^{0}$ are trivializations of $\mathcal{P}(J\times 0)|_{J\times 0}\cong \mathcal{O}_{J\times 0},\mathcal{P}|_{J\times 0}\cong K_{J\times 0}$ respectively. 
Setting $f_{-1}=c^{-1}, f_{0}=c^{0}$, the desired relation \eqref{eqnglobalframe} then follows from
\eqref{eqndimensionconstraintonP21storderrestriction}
and
\eqref{eqnfirstordernbd}.

\end{proof}

Note that the $c^{-1}$-term of $S_{c}(z)$ is given by the residue
along the map  $\mathcal{P}(J\times 0)\rightarrow \mathcal{P}(J\times 0)|_{J\times 0}$
and is thus independent of the choice of local defining equation for $J\times 0$ inside  $J\times \mathsf{Pic}^{(0)}(J)$.
The quasi-ellipticity of $\theta'/\theta$ given in \eqref{eqne1automorphy} and
\eqref{eqnfirstordernbd} yield the automorphy
behavior
\begin{equation}
\label{eqnautomorphicbehaviorofc0c-1}
\begin{pmatrix} 
f_{-1} &
 f_{0}
 \end{pmatrix}
\mapsto
\begin{pmatrix}
f_{-1} &
f_{0}
 \end{pmatrix}
\begin{pmatrix}
1 & 0\\
\pii &1
\end{pmatrix}\,.
\end{equation}
From this we see the non-splitness of the decomposition sequence
\eqref{eqndecompositionsequence}.
Furthermore, from   \eqref{eqnautomorphicbehaviorofc0c-1}
it follows that  the real-analytic  trivializations 
\begin{equation}\label{eqnrealanalyticframe}
     \widehat{f}_{-1}:=f_{-1}-\mathbold{A}(z)f_{0}\,,\quad 
      \widehat{f}_{0}:=f_{0}
\end{equation}
are almost-elliptic, with the sections in \eqref{eqnglobalframe}  now expressed as
\begin{equation}\label{eqnglobalframerealanalytic}
 \widehat{f}_{0}\,,\quad     \widehat{f}_{-1}+{\widehat{\theta}'\over \widehat{\theta}}(z) \cdot   \widehat{f}_{0}\,.
\end{equation}


For convenience, in what follows we
replace $\mathcal{P}(J\times 0)$ by $\mathcal{L}(0\times  \mathsf{Pic}^{(0)}(J) )$, with
\begin{equation}
\mathcal{L}:=\mathcal{O}_{J\times  \mathsf{Pic}^{(0)}(J) }\left([\mathrm{ker}\,m]-[0\times   \mathsf{Pic}^{(0)}(J)]\right)\,.
\end{equation}
Working with the section $\theta(z+c)\over \theta(z)$ of $\mathcal{L}(0\times  \mathsf{Pic}^{(0)}(J) )$, the frame 	\eqref{eqnglobalframe} then changes accordingly to
\begin{equation}
	\label{eqnLaurentpolynomialsforL}
c^{1}\,,\quad 
c^{0}+{c^{1}\partial_{z}\theta(z)\over \theta(z)}\,,
\end{equation}
where $c^{0}, c^{1}$ are trivializations of 
$
\mathcal{L}(0\times  \mathsf{Pic}^{(0)}(J) )|_{J\times 0}\cong 
\mathcal{O}_{J\times 0}(0\times 0)
,
 \mathcal{L}(0\times  \mathsf{Pic}^{(0)}(J) )|_{J\times 0}\otimes \check{N}|_{J\times 0}
$,
  respectively. 

 In what follows we also make the following identifications without explicit mentioning 
\begin{equation}\label{eqntrivializationsofbundlesoverJtimes0}
J\times 0\cong J\,,\quad 
 \mathcal{O}_{J\times 0}\cong \mathcal{O}_{J}\cong K_{J}\,,\quad  \check{N}|_{J\times 0}\cong K_{J}\,.
 \end{equation}
 
We now consider the map
\begin{equation}
s_{ij}: 
E_{i}\times E_{\bar{j}}\rightarrow J_{i\bar{j}}\times E_{\bar{j}}\,,\quad 
 (w_{i},z_{j})\mapsto (w_{i}+z_{i}-z_{j},z_{j})\,,
 \end{equation}
where $J_{i\bar{j}}$ is a copy of $J$ labeled by $i,\bar{j}$.
We then 
apply the same discussions as above, with $J\times \mathrm{Pic}^{(0)}(J)$ replaced by $J_{i\bar{j}}\times E_{\bar{j}}$.
Then we have
\[
\prod_{i=1}^{n}
 {\theta(w_{i}+z_{i}-z_{j}+c)\over \theta(w_{i}+z_{i}-z_{j})}\in 
 H^{0}
\left(E_{[n]}\times E_{\bar{j}},  \otimes_{i=1}^{n} s_{ij}^{*}\mathcal{L} (0\times  \mathsf{Pic}^{(0)}(J) )\right)
\]
and thus
\begin{equation}\label{eqnsjexpansion}
s_{j}:=
f_{0}^{(j)}+f_{1}^{(j)}\sum_{i=1}^{n}{\theta'(w_{i}+z_{i}-z_{j})\over \theta(w_{i}+z_{i}-z_{j})}\in 
 H^{0}
\left(E_{[n]}\times E_{\bar{j}},  \otimes_{i=1}^{n} s_{ij}^{*}\mathcal{L}(0\times  \mathsf{Pic}^{(0)}(J) )|_{2(J\times 0)} \right)\,,
\end{equation}
where
$f_{0}^{(j)}$, 
 $f_{1}^{(j)}$ are the trivializations of $\mathcal{L}(0\times  \mathsf{Pic}^{(0)}(J) )|_{J_{i\bar{j}}\times 0},
  \mathcal{L}(0\times  \mathsf{Pic}^{(0)}(J) )|_{J_{i\bar{j}}\times 0}\otimes K_{J_{i\bar{j}}}$, respectively.

Denote 
 the trivializations  $(f_{0}^{(j)})_{1\leq j\leq n}, (f_{1}^{(j)})_{1\leq j\leq n}$ by $\mathbold{f}_{0}, \mathbold{f}_{1}$ collectively,  and similarly the frame
 $(s_{1},\cdots, s_{j},\cdots, s_{n})$ by  $\mathbold{s}$.
 Then \eqref{eqnsjexpansion} can be written  compactly as
 \begin{equation}\label{eqnmatrixpresentationofsjs}
\mathbold{s}=
\begin{pmatrix}
	\mathbold{f}_{0}&\mathbold{f}_{1}
\end{pmatrix}
\cdot
 \begin{pmatrix}
	\mathbb{1}\\
	Z_{n} 
\end{pmatrix}\,.
 \end{equation}
 Passing to the real-analytic trivializations  $\widehat{f}_{0}^{(j)},\widehat{f}_{1}^{(j)},j\geq 1$ constructed similar to that  in \eqref{eqnglobalframerealanalytic}, this gives 
  \begin{equation}\label{eqnmatrixpresentationofsjsrealanalytic}
\mathbold{s}=
\begin{pmatrix}
	\widehat{\mathbold{f}}_{0}& \widehat{\mathbold{f}}_{1}
\end{pmatrix}
\cdot
 \begin{pmatrix}
	\mathbb{1}\\
	\widehat{Z}_{n} 
\end{pmatrix}\,.
 \end{equation}
This can be regarded as describing a point in the Grassmannian
 \begin{equation}\label{eqnGrassmannian}
 \mathrm{Gr}\left(n,\, V_{n}\right)\,,\quad V_{n}:=\oplus_{j=1}^{n} \otimes_{i=1}^{n}s_{ij}^{*} H^{0}\left(J\times 0, \mathcal{L}(0\times  \mathsf{Pic}^{(0)}(J) )|_{2(J\times 0)} \right)\,,
 \end{equation}
 with $\det \widehat{Z}_{n} $ being one of the affine coordinates in the corresponding affine patch.
 To be more precise, using the frame $\mathbold{s}=(s_{j})_{1\leq j\leq n}$, the Pl\"ucker embedding is given by
\begin{eqnarray}
  \mathrm{Gr}\left(n,\, V_{n}\right)&\rightarrow& \mathbb{P} (\wedge^{n}V_{n})\,\nonumber\\
    \mathrm{span}\,(s_{j})_{1\leq j\leq n}&\mapsto&
[\wedge^{n}\mathbold{s}]\,, \quad\quad  \wedge^{n}\bold{s}=s_{1}\wedge s_{2}\wedge\cdots \wedge s_{n}\,.
 \end{eqnarray}
From \eqref{eqnmatrixpresentationofsjsrealanalytic}, we obtain the Pl\"ucker coordinates of $\wedge^{n}\mathbold{s}$:
\begin{equation}\label{eqnpluckercoordinates}
\wedge^{n}\mathbold{s}=\sum_{\emptyset \subseteq I\subseteq [n]} \wedge^{\mathrm{top}}\widehat{\mathbold{f}}_{I}\cdot \mathrm{det}\,\widehat{Z}_{I}\,,
\quad 
\wedge^{\mathrm{top}}\widehat{\mathbold{f}}_{I}:=\wedge_{j\in I^{c}}\widehat{f}_{0}^{(j)}\,\wedge \, \wedge_{j\in I}\widehat{f}_{1}^{(j)}\,.
\end{equation}
Introducing a pairing 
 $\langle-,-\rangle$ on $\wedge^{n}V_{n}$ by declaring $(\widehat{\mathbold{f}}_{I})_{I}$ to be an orthonormal basis, then 
 \[
 \langle \wedge^{\mathrm{top}}\widehat{\mathbold{f}}_{I}\, , \wedge^{n}\mathbold{s}\rangle=\mathrm{det}\,\widehat{Z}_{I}\,.
 \]

Due to
 \eqref{eqntrivializationsofbundlesoverJtimes0}, 
 we can identify $\mathbold{s}$ with
\begin{equation}\label{eqndeterminantlinebundlesectionasdeformation}
\mathbold{s}(\varepsilon):=
 \begin{pmatrix}
 	\widehat{\mathbold{f}}_{0}& \widehat{\mathbold{f}}_{1}
 \end{pmatrix}
 \cdot
  \begin{pmatrix}
 	\mathrm{vol} &0\\
 	0& \delta_{n}
 \end{pmatrix}
\cdot
 \begin{pmatrix}
 	\mathbb{1}\\
 	\widehat{Z}_{n} 
 \end{pmatrix}
\,, \quad \mathrm{vol}=
\begin{pmatrix}
 \mathrm{vol}_{1} &  \cdots &0\\
 \vdots	&  \mathrm{vol}_{k}& \vdots\\
	0 &\cdots& \mathrm{vol}_{n}\\
\end{pmatrix}\,.
\end{equation}
 It follows that
\begin{equation}\label{eqnPluckercoordinatesasdeformation}
\wedge^{n} \mathbold{s}(\varepsilon)=	 \sum_{\emptyset \subseteq I\subseteq [n]} \wedge^{\mathrm{top}}\widehat{\mathbold{f}}_{I}\cdot \varepsilon_{I}\cdot\mathrm{det}\,\widehat{Z}_{I}\cdot \bigwedge_{k=1}^{n}\mathrm{vol}_{k}\,.
\end{equation}
The iterated regularized integral then gives
\begin{equation}\label{eqnregularizedintegraloftopwedge}
\dashint_{E_{[n]}}\wedge^{n}\mathbold{s}(\varepsilon)
=	
\sum_{\emptyset \subseteq I\subseteq [n]}\wedge^{\mathrm{top}}\widehat{\mathbold{f}}_{I}\cdot \varepsilon_{I} \widehat{G}_{I}\,.
\end{equation}
In particular, we have
\[
\dashint_{E_{[n]}} \langle \wedge^{\mathrm{top}}
\widehat{\mathbold{f}}_{ [n]}, \wedge^{n} \mathbold{s}(\varepsilon)\rangle =\varepsilon_{[n]}\cdot\widehat{G}_{[n]}\,. 
\]

The relation \eqref{eqnregularizedintegraloftopwedge}  tells that the generating series $\widehat{\mathcal{G}}_{[n]}(\varepsilon)$ given in Definition \ref{dfnofgeneratingseriesofTn} can be regarded as the iterated regularized integral of $\wedge_{j=1}^{n}s_{j}$, which is a section 
 of the determinant line bundle over the Grassmannian $\mathrm{Gr}(n,V_{n})$.
 It is the truncation, induced by the map $\mathcal{L}|_{\infty(J\times 0)}\rightarrow  \mathcal{L}|_{2(J\times 0)}$, of the corresponding construction using the formal neighborhood $\infty(J\times 0)$.
Concretely, the latter is realized by 
 replacing the Taylor polynomial 	\eqref{eqnLaurentpolynomialsforL} by the full Taylor series.
 
The construction using $\mathcal{L}|_{2(J\times 0)}$ is also the deformation of the construction with $\varepsilon=0$, which corresponds to
 the restriction of $\mathcal{L}$ to the $0$th order neighborhood $\mathcal{L}|_{(J\times 0)}$
 instead of the restriction to the 1st order neighborhood $\mathcal{L}|_{2(J\times 0)}$.

\bibliographystyle{amsalpha}

\providecommand{\bysame}{\leavevmode\hbox to3em{\hrulefill}\thinspace}
\providecommand{\MR}{\relax\ifhmode\unskip\space\fi MR }
\providecommand{\MRhref}[2]{%
  \href{http://www.ams.org/mathscinet-getitem?mr=#1}{#2}
}
\providecommand{\href}[2]{#2}

\bigskip{}

\noindent{\small Yau Mathematical Sciences Center, Tsinghua University, Beijing 100084, P. R. China}

\noindent{\small Email: \tt jzhou2018@mail.tsinghua.edu.cn}

\end{document}